\def\id{{\mbox{1 \hskip -7pt 1}}}
\newcommand{\sgn}{{\mathit s  \mathit g\mathit  n}}
 \newcommand{\lon}{\longrightarrow}
 \newcommand{\bu}{\bullet}
 \newcommand{\rar}{\rightarrow}
 \newcommand{\Der}{\mathrm{Der}}
\newcommand{\p}{{\partial}}
\newcommand{\Id}{{\mathrm{Id}}}
\newcommand{\bubu}{{_{\xy
 (0,0)*{_\bullet},
(2.5,0)*{_\bu},
 (-0.5,-0.05)*{}="a",
(2.8,-0.05)*{}="b",
\ar @{-} "a";"b" <0pt>
\endxy}}}
\newcommand{\wibu}{{_{\xy
 (0,0)*{_\bu},
(2.5,0)*{_\circ},
(-2.5,0)*{_\circ},
 (0,0)*{}="a",
(2.0,0)*{}="b",
(-2.25,0.)*{}="c",
\ar @{-} "a";"b" <0pt>
\ar @{-} "a";"c" <0pt>
\endxy}}}
 \newcommand{\Z}{{\mathbb Z}}
 \newcommand{\bS}{{\mathbb S}}
 \newcommand{\R}{{\mathbb R}}
 \newcommand{\K}{{\mathbb K}}
 \newcommand{\ot}{\otimes}
\newcommand{\sG}{{\mathsf G}}
\newcommand{\sP}{{\mathsf P}}
\newcommand{\Def}{\mathsf{Def}}
 \newcommand{\Beq}{\begin{equation}}
 \newcommand{\Eeq}{\end{equation}}
 \newcommand{\Beqr}{\begin{eqnarray}}
 \newcommand{\Eeqr}{\end{eqnarray}}
 \newcommand{\Beqrn}{\begin{eqnarray*}}
 \newcommand{\Eeqrn}{\end{eqnarray*}}
 \newcommand{\Ba}{\begin{array}}
 \newcommand{\Ea}{\end{array}}
 \newcommand{\Bi}{\begin{itemize}}
 \newcommand{\Ei}{\end{itemize}}
 \newcommand{\Bc}{\begin{center}}
 \newcommand{\Ec}{\end{center}}
 \newcommand{\fg}{{\mathfrak g}}
\newcommand{\fr}{{\mathfrak r}}
\newcommand{\fs}{{\mathfrak s}}
\newcommand{\ft}{{\mathfrak t}}
\newcommand{\fA}{{\mathfrak A}}
 \newcommand{\f}{{\mathcal O}}
 \newcommand{\cA}{{\mathcal A}}
 \newcommand{\cB}{{\mathcal B}}
 \newcommand{\cC}{{\mathcal C}}
 \newcommand{\caD}{{\mathcal D}}
 \newcommand{\cE}{{\mathcal E}}
 \newcommand{\cF}{{\mathcal F}}
 \newcommand{\cG}{{\mathcal G}}
 \newcommand{\caH}{{\mathcal H}}
 \newcommand{\caL}{{\mathcal L}}
 \newcommand{\cM}{{\mathcal M}}
 \newcommand{\cP}{{\mathcal P}}
 \newcommand{\cQ}{{\mathcal Q}}
 \newcommand{\cR}{{\mathcal R}}
 \newcommand{\cU}{{\mathcal U}}
 \newcommand{\al}{\alpha}
 \newcommand{\be}{\beta}
 \newcommand{\ga}{\gamma}
 \newcommand{\Ga}{\Gamma}
 \newcommand{\Hom}{{\mathrm H\mathrm o\mathrm m}}
 \newcommand{\sip}{\smallskip}
 \newcommand{\bip}{\bigskip}
 \newcommand{\mip}{\vspace{2.5mm}}
\newcommand{\GC}{\mathsf{GC}}
\newcommand{\grt}{\fg\fr\ft}
\newcommand{\CB}{\mathcal{C}\mathit{omb}}
\newcommand{\Assb}{\mathcal{A}\mathit{ssb}}
  \newcommand{\LB}{\mathcal{L}\mathit{ieb}}
\newcommand{\wLB}{\widehat{\mathcal{L}\mathit{ieb}}}
\theoremstyle{plain}
\newtheorem{theorem}{Theorem}[subsection]
\newtheorem{lemma}[theorem]{Lemma}
\newtheorem{prop-def}[theorem]{Proposition-definition}
\newtheorem{f-theorem}{Formality Theorem}[section]
\newtheorem{main-theorem}{Main~Theorem}[section]
\newtheorem{section-theorem}{Theorem}[section]
\theoremstyle{definition}
\begin{document}

 \sloppy

 \newenvironment{proo}{\begin{trivlist} \item{\sc {Proof.}}}
  {\hfill $\square$ \end{trivlist}}

\long\def\symbolfootnote[#1]#2{\begingroup%
\def\thefootnote{\fnsymbol{footnote}}\footnote[#1]{#2}\endgroup}

\title{Classification of universal formality maps\\
for quantizations of Lie bialgebras}


 \author{Sergei Merkulov}
\address{Sergei~Merkulov:  Mathematics Research Unit, University of Luxembourg,  Grand Duchy of Luxembourg}
\email{sergei.merkulov@uni.lu}

\author{Thomas~Willwacher}
\address{Thomas~Willwacher: Department of Mathematics, ETH Zurich, Zurich, Switzerland}
\email{thomas.willwacher@math.ethz.ch}

\thanks{S.M. has been partially supported by the Swedish Vetenskapr\aa det, grant 2012-5478. T.W. has been partially supported by the Swiss National Science foundation, grant 200021\_150012, and the SwissMAP NCCR funded by the Swiss National Science foundation. }

\begin{abstract} We settle several fundamental  questions about the theory of universal deformation quantization of Lie bialgebras by giving their complete classification up to homotopy equivalence. Moreover, we settle these questions in a greater generality --- we give a complete classification of the associated universal formality maps.

\sip

An important new technical ingredient introduced in this paper  is a {\em polydifferential}\, endofunctor $\caD$ in the category of augmented props with the property
that for any representation of a prop $\cP$ in a vector space $V$ the associated prop $\caD\cP$
admits an induced representation on the graded commutative algebra $\odot^\bu V$ given in terms of polydifferential operators.


\sip

Applying this functor to the minimal resolution $\wLB_\infty$ of the genus completed prop $\wLB$ of Lie bialgebras we show that universal formality
maps for quantizations of Lie bialgebras are in in 1-1 correspondence with morphisms of dg props
$$
F: \Assb_\infty \lon \caD\wLB_\infty
$$
satisfying certain boundary conditions,
where $\Assb_\infty$ is a minimal resolution
 of the prop of associative bialgebras. We prove that the set of such formality morphisms is non-empty. The latter result is used in turn to
give a short proof of the formality theorem for universal quantizations of arbitrary Lie bialgebras
 which says that for any Drinfeld associator $\fA$  there is an associated $\caL ie_\infty$ quasi-isomorphism between the $\caL ie_\infty$ algebras $\Def(\cA ss\cB_\infty\rar \cE nd_{\odot^\bu V})$ and $\Def(\caL ie\cB\rar \cE nd_V)$ controlling, respectively,  deformations of the standard
bialgebra structure in $\odot V$ and deformations of any given Lie bialgebra structure in $V$.

\sip

We study the deformation complex of an arbitrary universal formality morphism $\Def(\Assb_\infty \stackrel{F}{\rar} \caD\wLB_\infty)$
 and prove that it is quasi-isomorphic to the full (i.e.\ not necessary connected) version of the graph complex introduced  Maxim Kontsevich in the context of the theory of deformation quantizations of Poisson manifolds. This result gives a complete classification of the set  $\{F_\fA\}$  of gauge equivalence classes of universal Lie connected formality
maps --- it is a torsor over the Grothendieck-Teichm\"uller group $GRT=GRT_1\rtimes \K^*$ and can hence can be identified with the set $\{\fA\}$ of Drinfeld associators.

\sip

\sip

\noindent {\sc Key words}. Hopf algebras,  Lie bialgebras,
 deformation quantization, Grothendieck-Teichm\"uller group, graph complexes.
\end{abstract}
 \maketitle
\markboth{}{Formality theorem}

{\small
{\small
\tableofcontents
}
}

{\Large
\section{\bf Introduction}
}

\subsection{Open questions settled} The theory of universal deformation quantizations of Lie bialgebras started with the paper by Vladimir Drinfeld \cite{D} and played a very important role in several different areas of mathematics including the theory of quantum groups, the theory of Hopf algebras, the number theory, the theory of braid groups, the Poisson geometry, the theory of the Grothendieck-Teichm\"uller group and others. The first proof by Pavel Etingof and David Kazhdan \cite{EK} of the Main Theorem in this theory --- the existence theorem of universal quantizations of Lie bialgebras --- exhibited an important role which
Drinfeld's associators play in the story: every such an associator gives us an associated universal quantization. This leads to the questions: Do different associators give us really different (i.e.\ homotopy inequivalent) universal quantizations? Do Drinfeld associators give us {\em all}\, possible universal quantizations, or not? We settle these questions in the present paper by proving that the set of homotopy classes of universal quantizations of Lie bialgebras can be identified with the set of Drinfeld's associators. In particular, the Grothendieck-Teichm\"uller group acts transitively and faithfully on this set.
In fact, we settle these questions in a greater generality --- we give a complete classification of the associated universal formality maps (of which universal quantizations
of Lie bialgebras form a special case), and prove that the deformation complex of any such a formality map is quasi-isomorphic to the full version of the graph complex introduced  Maxim Kontsevich in the context of the theory of deformation quantizations of Poisson manifolds.

\subsection{Polydifferential functor}  We introduce an endofunctor $\caD$ in the category of (augmented) props with the property
that for any representation of a prop $\cP$ in a vector space $V$ the associated {\em polydifferential}\, prop $\caD\cP$
admits an induced representation on the graded commutative algebra $\f_V:=\odot^\bu V$ given in terms of polydifferential operators. For any $\cP$ the polydifferential prop $\caD\cP$ comes equipped with a canonical injection
$\cC omb \rar \caD \cP$ from the prop $\cC omb$ of commutative and cocommutative bialgebras.

\sip

 We apply this functor to the minimal resolution $\wLB_\infty$ of the genus completed version of the prop of Lie bialgebras $\LB$ introduced by Vladimir Drinfeld and obtain the following results.

\subsection{Main Theorem}\label{1: Main Theorem} (i) {\em For any choice of a Drinfeld associator $\fA$ there is an associated highly non-trivial\footnote{The precise meaning of what we mean by {\em highy non-trivial}\, is given in \S {\ref{5: Existence Theorem for formalities}} below. It says essentially that the morphism $F_\fA$ is non-zero on {\em each}\, member of the infinite family of generators of $\Assb_\infty$  (see formula (\ref{5: Boundary cond for formality map})).}
morphism of dg props,
\Beq\label{1: formality map F_A}
F_\fA: \Assb_\infty \lon \caD\wLB_\infty.
\Eeq
}
where $\Assb_\infty$ stands for a minimal resolution of the prop of associative bialgebras.

\sip
(ii) {\em  For any graded vector space $V$, each morphism
$F_\fA$ induces a $\caL ie_\infty$ quasi-isomorphism (called a {\em formality map})
between the dg $\caL ie_\infty$ algebra\footnote{As a complex this is precisely the Gerstenhaber-Schack complex \cite{GS} of $\f_V$.}
$$
\Def(\cA ssb_\infty\stackrel{\rho_0}{\lon} \cE nd_{\f_V})
$$
controlling deformations of the standard graded commutative and co-commutative bialgebra structure $\rho_0$ in $\f_V$, and the Lie algebra\footnote{Maurer-Cartan
elements of this Lie algebra are precisely strongly homotopy Lie bialgebra structures in $V$.}
$$
\Def(\LB\stackrel{0}{\lon} \cE nd_V)
$$
controlling deformations of the trivial morphism $0: \LB\rar \cE nd_V$. 
}
\mip

(iii) {\em Let\, $\Def\left(\Assb_\infty \stackrel{F_\fA}{\lon} \caD \wLB_\infty\right)$ be the deformation complex of any given  formality map $F_\fA$. Then there is a canonical morphism of complexes
 $$
 \mathsf{fGC}_3^\uparrow \lon \Def\left(\Assb_\infty \stackrel{F_\fA}{\lon} \caD \wLB_\infty\right)
 $$
 which is a quasi-isomorphism.
 Here  $\mathsf{fGC}_3^\uparrow$ is the
  oriented ``3-dimensional" incarnation (with the same cohomology \cite{Wi2})  of the full ``2-dimensional" Kontsevich graph complex introduced  in \cite{Ko-f}.
 \mip

 (iv)
 The set of homotopy classes of universal formality maps as in (\ref{1: formality map F_A}) can be identified with the set of Drinfeld associators. In particular,
the Grothendieck-Teichm\"uller group $GRT=GRT_1\rtimes \K^*$ acts faithfully and transitively on such
universal formality maps.}

\mip

In the proof of item (i) we used the Etingof-Kazhdan theorem \cite{EK} which says that any Lie bialgebra
can deformation quantized in the sense introduced by Drinfeld in \cite{D}. Later different proofs of this theorem have been given
by D.\ Tamarkin \cite{Ta}, P.\ Severa \cite{Se} and the authors \cite{MW3} (the latest proof in \cite{MW3} gives an explicit formula for a universal quantization of Lie bialgebras).
Item (ii) in the above Theorem is a Lie bialgebra analogue of the famous Kontsevich
formality theorem for deformation quantizations of Poisson manifolds (it was first proved in \cite{Me2} with the help of a more complicated method). In the proof of item (iii) we used an earlier result \cite{MW2} which established a quasi-isomorphism between the
deformation complex of the properad of Lie bialgebras and the oriented graph complex. In the proof of item
(iv) we used a computation \cite{Wi2} of the zero-th cohomology group of the graph complex $\mathsf{fGC}_3^\uparrow$,
$$
H^0(\mathsf{fGC}_3^\uparrow)=\grt
$$
where $\grt$ is the Lie algebra of the Grothendieck-Teichm\"uller group $GRT$ introduced by
Drinfeld in \cite{D2}.

\sip

Given any continuous morphism of topological dg props
$$
i: \wLB_\infty \lon \cP
$$
there is, by the Main Theorem, an associated {\em quantized}\, morphism given as a composition
$$
i_\fA: \Assb_\infty \stackrel{F_\fA}{\lon} \caD\wLB_\infty \stackrel{\caD(i)}{\lon}\caD \cP
$$
which depends in general on the choice of an associator $\fA$. An important example of such a non-trivial morphism $i$ was found recently in \cite{MW},
\Beq\label{1: from LB to RGra}
i: \wLB_\infty \lon \widehat{\cR\cG ra},
\Eeq
where $\cR\cG ra$ stands for the prop of {\em ribbon}\, graphs (see \cite{MW}) and  $\widehat{\cR\cG ra}$ for its genus competition; this morphism $i$ provides us with many new algebraic structures on the totality
$$
\prod_{g,n} H^\bu(\cM_{g,n})\ot_{\bS_n} \sgn_n
$$
of cohomology groups of moduli spaces of algebraic curves of genus $g$ and with $n$ (skewsymmetrized) punctures,
and establishes a surprisingly strong link between the latter and the cohomology theory of ordinary
(i.e.\ non-ribbon) graph complexes \cite{MW}. The quantized version of the morphism (\ref{1: from LB to RGra}) will be studied elsewhere.


\subsection{Some notation}
 The set $\{1,2, \ldots, n\}$ is abbreviated to $[n]$;  its group of automorphisms is
denoted by $\bS_n$;
the trivial one-dimensional representation of
 $\bS_n$ is denoted by $\id_n$, while its one dimensional sign representation is
 denoted by $\sgn_n$; we often abbreviate $\sgn_n^d:= \sgn_n^{\ot |d|}$.
The cardinality of a finite set $A$ is denoted by $\# A$.

\sip

We work throughout in the category of $\Z$-graded vector spaces over a field $\K$
of characteristic zero.
If $V=\oplus_{i\in \Z} V^i$ is a graded vector space, then
$V[k]$ stands for the graded vector space with $V[k]^i:=V^{i+k}$ and
and $s^k$ for the associated isomorphism $V\rar V[k]$; for $v\in V^i$ we set $|v|:=i$.
For a pair of graded vector spaces $V_1$ and $V_2$, the symbol $\Hom_i(V_1,V_2)$ stands
for the space of homogeneous linear maps of degree $i$, and
$\Hom(V_1,V_2):=\bigoplus_{i\in \Z}\Hom_i(V_1,V_2)$; for example, $s^k\in \Hom_{-k}(V,V[k])$.
Furthermore, we use the notation $\odot^k V$ for the k-fold symmetric product of the vector space $V$.
\sip

For a
prop(erad) $\cP$ we denote by $\cP\{k\}$ a prop(erad) which is uniquely defined by
 the following property:
for any graded vector space $V$ a representation
of $\cP\{k\}$ in $V$ is identical to a representation of  $\cP$ in $V[k]$.
 The degree shifted operad of Lie algebras $\caL \mathit{ie}\{d\}$  is denoted by $\caL ie_{d+1}$ while its minimal resolution by $\caH olie_{d+1}$; representations of $\caL ie_{d+1}$ are vector spaces equipped with Lie brackets of degree $-d$.

\sip

For a right (resp., left) module $V$ over a group $G$ we denote by $V_G$ (resp.\
$_G\hspace{-0.5mm}V$)
 the $\K$-vector space of coinvariants:
$V/\{g(v) - v\ |\ v\in V, g\in G\}$ and by $V^G$ (resp.\ $^GV$) the subspace
of invariants: $\{\forall g\in  G\ :\  g(v)=v,\ v\in V\}$. If $G$ is finite, then these
spaces are canonically isomorphic as $char(\K)=0$.

\bip

\bip

{\Large
\section{\bf A polydifferential endofunctor}
}

\mip

\subsection{Polydifferential operators.} For an arbitrary vector space $V$ the associated graded commutative tensor
algebra $\f_V:=\odot^\bu V$ has a canonical structure of a commutative and co-commutative bialgebra with the
multiplication denoted by central dot (or just by juxtaposition)
$$
\cdot: \f_V\ot \f_V \rar \f_V
$$
and the comultiplication denoted by
$$
\Delta: \f_V \lon \f_V\ot \f_V
$$
This comultiplication is uniquely determined by its value on an arbitrary element $x\in V$ which is given by
$$
\Delta(x)= 1\ot x+ x\ot 1.
$$
The bialgebra $\f_V$ is (co)augmented, with the (co)augmentation (co-)ideal denoted by $\bar{\f}_V:=\odot^{\bu \geq 1} V$. The reduced diagonal $\bar{\f}_V \rar \bar{\f}_V\ot \bar{\f}_V$ is denoted by $\bar{\Delta}$.  The multiplication in $\f_V$ induces naturally a multiplication in $\f_V^{\ot m}$ for any $m\geq 2$,
$$
\cdot: \f_V^{\ot m} \ot \f_V^{\ot m} \ot \f_V^{\ot m}
$$
which is also denoted by the central dot.

\sip

As $\f_V$ is a bialgebra, one can consider an associated Gerstenhaber-Schack complex
$$
 C_{GS}(\f_V^{\ot \bullet}, \f_V^{\ot \bullet})= \bigoplus_{m,n\geq 1} \Hom(\f_V^{\ot n}, \f_V^{\ot m})[m+n-2]
$$
equipped with the differential
\cite{GS},
$$
d_{\fg\fs}=d_1 \oplus d_2:  \Hom(\f_V^{\ot n}, \f_V^{\ot m}) \lon  \Hom(\f_V^{\ot n+1}, \f_V^{\ot m}) \oplus
\Hom(\f_V^{\ot n}, \f_V^{\ot m+1}),
$$
with $d_1$ given on an arbitrary $f\in \Hom(V^{\ot n}, V^{\ot m})$  by
\Beqrn
(d_1f)(v_0, v_1, \ldots, v_n)&:=&\Delta^{m-1}(v_0)\cdot f(v_1, v_2, \ldots, v_n) -
\sum_{i=0}^{n-1} (-1)^if(v_1, \dots, v_iv_{i+1}, \ldots, v_n)\\
&& + (-1)^{n+1}f(v_1, v_2, \ldots, v_n)\cdot\Delta^{m-1}(v_n)
\forall\ \ v_0, v_1,\ldots, v_n\in V,
\Eeqrn
where
$$
\Delta^{m-1}:= (\Delta\ot \Id^{\ot m-2 })\circ (\Delta\ot \Id^{\ot m-3})\circ \ldots \circ \Delta: V \rar V^{\ot m},
$$
for $m\geq 2$, we set $\Delta^0:=\Id$.
The expression for $d_2$ is an obvious ``dual" analogue of the one for $d_1$.

\mip

One can consider a subspace,
\Beq\label{2: C_poly in C_GS}
C_{poly}(\f_V^{\ot \bullet}, \f_V^{\ot \bullet}) \subset
 C_{GS}(\f_V^{\ot \bullet}, \f_V^{\ot \bullet}),
\Eeq
spanned by so called {\em polydifferential}\, operators,
$$
\Ba{rccc}
\Phi: & \f_V^{\ot m} &  \lon &  \f_V^{\ot n}\\
& f_1\ot \ldots \ot f_m & \lon & \Phi(f_1, \ldots, f_m),
\Ea
$$
which are linear combinations of operators of the form,
\Beq\label{polyoperator}
\Phi(f_1, \ldots, f_m)= 
x^{\be_{J_1}} \ot \ldots \ot x^{\be_{J_n}}\cdot \Delta^{n-1}\left( \frac{\p^{|\al_{I_1}|}f_1}{\p x^{\al_{I_1}}}\right)\cdot\ldots \cdot
 \Delta^{n-1}\left( \frac{\p^{|\al_{I_m}|}f_m}{\p x^{\al_{I_m}}}\right).
\Eeq
where $\{x^\al\}_{\al\in S}$ is some basis in $V$, $\al_I$ and $\be_{J}$ stand for multi-indices (say, for $\al_1\al_2\ldots \al_p$,
and $\be_1\be_2\ldots \be_q$) from the set $S$,
$$
x^{\be_J}:= x^{\be_1}x^{\be_2}\ldots x^{\be_q}, \ \ \ \ \ \
 \frac{\p^{|\al_I|}}{\p x^{\al_I}}:= \frac{\p^{p}}{\p x^{\al_1}\p x^{\al_2}\ldots \p x^{\al_p}}.
$$
In fact this subspace is a subcomplex of the Gerstenhaber-Shack complex with the differential given explicitly
by
\Beqrn
d_{\fg\fs}\Phi &=&
\sum_{i=1}^{n}(-1)^{i+1}
x^{\be_{J_1}} \ot  \mbox{...} \ot \bar{\Delta}(x^{\be_{J_i}})\ot   \mbox{...}  \ot x^{\be_{J_n}}\cdot \Delta^{n}
\left( \frac{\p^{|\al_{I_1}|}}{\p x^{\al_{I_1}}}\right)\cdot\ldots \cdot
 \Delta^{n}\left( \frac{\p^{|\al_{I_m}|}}{\p x^{\al_{I_m}}}\right)
\nonumber \\
&&
+\ 
\sum_{i=1}^{m}(-1)^{i+1}\sum_{\al_{I_i}=\al_{I_i'}\sqcup
\al_{I_i''}}
\nonumber \\
&&
\ \ \ \
x^{\be_{J_1}} \ot \mbox{...} \ot x^{\be_{J_m}}\cdot \Delta^{n-1}\left( \frac{\p^{|\al_{I_1}|}}{\p x^{\al_{I_1}}}\right)\cdot\mbox{...} \cdot
 \Delta^{n-1}\left( \frac{\p^{|\al_{I'_i}|}}{\p x^{\al_{I'_i}}}\right)\cdot  \Delta^{n-1}\left( \frac{\p^{|\al_{I_i''}|}}{\p x^{\al_{I''_i}}}\right)
\cdot\mbox{...}\cdot
 \Delta^{n-1}\left( \frac{\p^{|\al_{I_m}|}}{\p x^{\al_{I_m}}}\right),
\Eeqrn
where  the second summation goes over all
possible splittings, ${I_i=I_i'\sqcup I_i''}$, into  disjoint subsets.
Moreover, the inclusion (\ref{2: C_poly in C_GS}) is a quasi-isomorphism \cite{Me2}.
Note that polydifferential operators (\ref{polyoperator}) are allowed to have sets of multi-indices $I_i$ and $J_i$ with
zero cardinalities $|I_i|=0$ and/or $|J_i|=0$ (so that the standard multiplication and comultiplication in $\f_V$
belong to the class of polydifferential operators.

\subsection{The polydifferential prop associated to a prop} In this section we construct
an endofunctor in the category of (augmented) props which has the property
that for any prop $\cP=\{\cP(m,n)\}_{m,n\geq 1}$ and its representation
$$
\rho: \cP \lon \cE nd_V
$$
in a vector space $V$, the associated prop $\caD\cP=\{\caD\cP(k)\}_{k\geq 1}$
admits an associated representation,
$$
\rho^{poly}: \caD\cP \lon \cE nd_{\odot^\bu V}
$$
in the graded commutative algebra $\odot^\bu V$ on which elements $p\in\cP$ act
as  polydifferential operators.

\mip

The idea is simple, and is best expressed in some basis $\{x^\al\}_{\al\in S}$ in $V$
 (so that $\odot^\bu V\simeq \K[x^\al]$).
Any element $p\in \cP(m,n)$ gets transformed by $\rho$ into a linear map
$$
\Ba{rccc}
\rho(p): & \ot^n V & \lon & \ot^m V\\
         & x^{\al_1}\ot x^{\al_2} \ot\ldots \ot x^{\al_n} & \lon &
 \displaystyle \sum_{\be_1,\be_2,...,\be_m} A^{\al_1\al_2...\al_n}_{\be_1\be_2...,\be_m} x^{\be_1}\ot x^{\be_2} \ot\ldots \ot x^{\be_n}
\Ea
$$
for some $A^{\al_1\al_2...\al_n}_{\be_1\be_2...,\be_m}\in \K$.

\sip

Then, for any partitions
$$
[m]=J_1\sqcup \ldots  \sqcup J_l\ \ \ \ \  \mathrm{and}\ \ \ \ \
[n]=I_1\sqcup \ldots  \sqcup I_k
$$
 of the sets $[m]$ and  $[n]$ into a disjoint union of (possibly, not all non-empty) subsets,
$$
J_j=\{s_{a_1}, s_{a_2}, \ldots, s_{a_{\# J_j}}\},\ 1\leq j\leq l,\ \ \ \ \ \mbox{and}\ \ \ \ \
 I_i=\{s_{b_1}, s_{b_2}, \ldots, s_{b_{\# I_i}}\},\  1\leq i\leq k,
$$
 we can associate to $p\in \cP(m,n)$ a polydifferential operator
$$
\Ba{rccc}
p^{poly}: & \ot^k (\odot^\bu V) & \lon & \ot^l(\odot^\bu V)\\
         & f_1(x)\ot f_2(x) \ot\ldots \ot f_n(x) & \lon &
p^{poly}(f_1,\ldots,f_k)
\Ea
$$
where
$$
p^{poly}(f_1,\ldots,f_k):=
\displaystyle \sum_{\al_1,...,\al_n\atop
\be_1,..., \be_m}  A^{\al_{I_1}... \al_{I_k}}_{\be_{J_1}...,\be_{J_l}}\,
x^{\be_{J_1}} \ot \ldots \ot x^{\be_{J_l}}\cdot \Delta^{l-1}\left( \frac{\p^{|\al_{I_1}|}f_1}{\p x^{\al_{I_1}}}\right)\cdot\ldots \cdot
 \Delta^{l-1}\left( \frac{\p^{|\al_{I_k}|}f_k}{\p x^{\al_{I_k}}}\right).
$$
This association $p\rar p^{poly}$ (for any fixed partition of $[m]$ and $[n]$) is independent of the choice of a basis used in the construction. Our purpose is to construct a prop $\caD\cP$
 out of the prop $\cP$ together with a linear map
$$
F_{[n]=I_1\sqcup...\sqcup I_k}^{[m]=J_1\sqcup...\sqcup J_l}: \cP(m,n) \lon \f\cP(l,k)
$$
such that, for any representation $\rho:\cP\rar \cE nd_V$,  the prop $\caD\cP$ admits a natural representation $\rho^{poly}$ in
 $\odot^\bu V$ and
$$
p^{poly}= \rho^{poly}\left(F_{[n]=I_1\sqcup...\sqcup I_k}^{[m]=J_1\sqcup...\sqcup J_l}(p)\right).
$$
Note that $\odot^\bu V$ carries a natural representation of the prop $\CB$ controlling commutative and cocommutative bialgebras so that
the latter  can also be incorporated into  $\caD\cP$ in the form of operators corresponding, in the above notation,
to the case when all the sets $I_i$ and $J_j$ are empty.

\subsubsection{\bf Remark}  The functor
$\caD$ (see below) can be defined for an arbitrary unital prop. However, some of our constructions in this paper
become nicer if we assume that the props we consider are augmented,
$$
\cP= \K \oplus \bar{\cP},
$$
and apply the functor $\caD$ to the augmentation ideal $\bar{\cP}$ only\footnote{The restriction of the functor $\caD$ to the augmentation ideal comes, however, at the price of loosing  some {\em rescaling}\, operators in $\caD\cP$ which  can be  useful in applications.}.

\subsubsection{\bf Definition}\label{2: Definition of DP prop} Let $\cP$ be an augmented prop. Define a collection of (completed with respect to the filtration by the number of outputs and inputs) $\bS$-modules,
$$
\caD\cP(l,k) :=\CB(l,k)\ \oplus \prod_{m,n\geq 1}\bigoplus_{[m]=\ J_1\sqcup ...\sqcup J_l\atop
{[n]=\ I_1\sqcup ...\sqcup I_k \atop
\# I_i,..., \# J_j\geq 0}} \caD\cP^{J_1,...,J_l}_{I_1,...,I_k}
$$
where
\Beqrn
\CB(l,k)&:=&\id_l\ot \id_k\\
\caD\cP^{J_1,...,J_l}_{I_1,...,I_k} &:=&
   (\id_{S_{J_1}} \ot \ldots \ot \id_{S_{J_l}}) \ot_{S_{J_1}\times ...\times S_{J_l}}  \ot \bar{\cP}(m,n)\ot_{S_{I_1}\times ...\times S_{I_k}} (\id_{S_{I_1}} \ot \ldots \ot \id_{S_{I_k}})
\Eeqrn
where $\id_I$ stands for the trivial one-dimensional representation of the permutation group $S_I$.
Thus an element of the summand $\caD\cP^{J_1,...,J_l}_{I_1,...,I_k}$ is an element of $\cP(\# I_1+ \ldots + \# I_l,\# J_1+ \ldots + \# J_k)$ with  inputs (resp., outputs) belonging to each bunch $I_i$ (resp., $J_j$)
 symmetrized. We assume from now on that all legs in each bunch $I_i$ (resp., $J_j$) are labelled by the same integer $i$ (resp., $j$); this defines an action of the group $\bS_l^{op}\times \bS_k$ on $\caD\cP(l,k)$.
There is a canonical linear projection,
\Beq\label{2: projection from P to DP}
\pi^{J_1,...,J_l}_{I_1,...,I_k}: \bar{\cP}(m,n) \lon \caD\cP^{J_1,...,J_l}_{I_1,...,I_k}.
\Eeq

\mip

One can represent elements $p$ of the (non-unital) prop  $\bar{\cP}$ as (decorated) corollas,
$$
p\ \ \sim
\Ba{c}\resizebox{10mm}{!}{ \xy
(0,4.5)*+{...},
(0,-4.5)*+{...},
(0,0)*{\circ}="o",
(-5,5)*{}="1",
(-3,5)*{}="2",
(3,5)*{}="3",
(5,5)*{}="4",
(-3,-5)*{}="5",
(3,-5)*{}="6",
(5,-5)*{}="7",
(-5,-5)*{}="8",
(-5.5,7)*{_1},
(-3,7)*{_2},
(3,6)*{},
(5.9,7)*{m},
(-3,-7)*{_2},
(3,-7)*+{},
(5.9,-7)*{n},
(-5.5,-7)*{_1},
\ar @{-} "o";"1" <0pt>
\ar @{-} "o";"2" <0pt>
\ar @{-} "o";"3" <0pt>
\ar @{-} "o";"4" <0pt>
\ar @{-} "o";"5" <0pt>
\ar @{-} "o";"6" <0pt>
\ar @{-} "o";"7" <0pt>
\ar @{-} "o";"8" <0pt>
\endxy}\Ea \in \bar{\cP}(m,n)
$$
The image of $p$  under the projection (\ref{2: projection from P to DP}) is represented pictorially as the same  corolla but with output and input legs decorated by {\em not necessarily distinct}\, numbers,
$$
\pi^{J_1,...,J_l}_{I_1,...,I_k}(p) \sim
\Ba{c}\resizebox{15mm}{!}{\xy
(-9,-6)*{};
(0,0)*{\circ}
**\dir{-};
(-7.5,-6)*{};
(0,0)*{\circ }
**\dir{-};
(-6,-6)*{};
(0,0)*{\circ }
**\dir{-};
(-1,-6)*{};
(0,0)*{\circ }
**\dir{-};
(0,-6)*{};
(0,0)*{\circ }
**\dir{-};
(1,-6)*{};
(0,0)*{\circ }
**\dir{-};
(9,-6)*{};
(0,0)*{\circ }
**\dir{-};
(7.5,-6)*{};
(0,0)*{\circ }
**\dir{-};
(6,-6)*{};
(0,0)*{\circ }
**\dir{-};
(-3,-5)*{...};
(3,-5)*{...};
(-8,-9)*{\underbrace{\ \  }_{I_1}};
(0,-9)*{\underbrace{\ \  }_{I_i}};
(8,-9)*{\underbrace{\ \  }_{I_k}};
(-9,6)*{};
(0,0)*{\circ}
**\dir{-};
(-7.5,6)*{};
(0,0)*{\circ }
**\dir{-};
(-6,6)*{};
(0,0)*{\circ }
**\dir{-};
(-1,6)*{};
(0,0)*{\circ }
**\dir{-};
(0,6)*{};
(0,0)*{\circ }
**\dir{-};
(1,6)*{};
(0,0)*{\circ }
**\dir{-};
(9,6)*{};
(0,0)*{\circ }
**\dir{-};
(7.5,6)*{};
(0,0)*{\circ }
**\dir{-};
(6,6)*{};
(0,0)*{\circ }
**\dir{-};
(-3,5)*{...};
(3,5)*{...};
(-8,9)*{\overbrace{\ \  }^{J_1}};
(0,9)*{\overbrace{\ \  }^{J_j}};
(8,9)*{\overbrace{\ \  }^{J_l}};
\endxy}\Ea
\sim
\Ba{c}\resizebox{15mm}{!}{\xy
(-9,-6)*{};
(0,0)*{\circ }
**\dir{-};
(-7.5,-6)*{};
(0,0)*{\circ }
**\dir{-};
(-6,-6)*{};
(0,0)*{\circ }
**\dir{-};
(-1,-6)*{};
(0,0)*{\circ }
**\dir{-};
(0,-6)*{};
(0,0)*{\circ }
**\dir{-};
(1,-6)*{};
(0,0)*{\circ }
**\dir{-};
(9,-6)*{};
(0,0)*{\circ }
**\dir{-};
(7.5,-6)*{};
(0,0)*{\circ }
**\dir{-};
(6,-6)*{};
(0,0)*{\circ }
**\dir{-};
(-3,-5)*{...};
(3,-5)*{...};
(-9,-7.5)*{_1};
(-7.5,-7.5)*{_1};
(-6,-7.5)*{_1};
(-1.1,-7.5)*{_i};
(1.1,-7.5)*{_i};
(0,-7.5)*{_i};
(7.8,-7.5)*{_k};
(6.3,-7.5)*{_k};
(9.6,-7.5)*{_k};
(-9,6)*{};
(0,0)*{\circ }
**\dir{-};
(-7.5,6)*{};
(0,0)*{\circ }
**\dir{-};
(-6,6)*{};
(0,0)*{\circ }
**\dir{-};
(-1,6)*{};
(0,0)*{\circ }
**\dir{-};
(0,6)*{};
(0,0)*{\circ }
**\dir{-};
(1,6)*{};
(0,0)*{\circ }
**\dir{-};
(9,6)*{};
(0,0)*{\circ }
**\dir{-};
(7.5,6)*{};
(0,0)*{\circ }
**\dir{-};
(6,6)*{};
(0,0)*{\circ }
**\dir{-};
(-3,5)*{...};
(3,5)*{...};
(-9,7.5)*{^1};
(-7.5,7.5)*{^1};
(-6,7.5)*{^1};
(-1.1,7.5)*{^j};
(1.1,7.5)*{^j};
(0,7.5)*{^j};
(7.8,7.5)*{^l};
(6.3,7.5)*{^l};
(9.6,7.5)*{^l};
\endxy}\Ea \ \ \ \ \  1\leq i\leq k,\  1\leq j\leq l.
$$
Note that some of the sets $I_i$ and/or $J_j$ can be empty so that some of the numbers decorating inputs and/or outputs can have no
legs attached at all. For example, one and the same element
$$
q= \Ba{c}\resizebox{8mm}{!}{\xy
(0,0)*{\circ}="o",
(-4,5)*{}="1",
(-1.5,5)*{}="2",
(1.5,5)*{}="3",
(4,5)*{}="4",
(-3,-5)*{}="5",
(3,-5)*{}="6",
(5,-5)*{}="7",
(-5,-5)*{}="8",
(0,-5)*{}="9",
(-4.5,7)*{_1},
(-1.7,7)*{_2},
(1.7,7)*{_3},
(4.5,7)*{_4},
(-3,-7)*{_2},
(3.5,-7)*{_4},
(5.9,-7)*{_5},
(-5.5,-7)*{_1},
(0,-7)*{_3},
\ar @{-} "o";"1" <0pt>
\ar @{-} "o";"2" <0pt>
\ar @{-} "o";"3" <0pt>
\ar @{-} "o";"4" <0pt>
\ar @{-} "o";"5" <0pt>
\ar @{-} "o";"6" <0pt>
\ar @{-} "o";"7" <0pt>
\ar @{-} "o";"8" <0pt>
\ar @{-} "o";"9" <0pt>
\endxy}\Ea\in \bar{\cP}(4,5)
$$
can generate, for different partitions, several different elements in $\caD\cP$,
\Beq\label{2: examples of DP}
\Ba{c}\resizebox{8mm}{!}{\xy
(0,0)*{\circ}="o",
(-4,5)*{}="1",
(-1.5,5)*{}="2",
(1.5,5)*{}="3",
(4,5)*{}="4",
(-3,-5)*{}="5",
(3,-5)*{}="6",
(5,-5)*{}="7",
(-5,-5)*{}="8",
(0,-5)*{}="9",
(-4.5,6.9)*{_1},
(-1.7,6.9)*{_2},
(1.8,6.9)*{_1},
(4.6,6.9)*{_2},
(-3,-7)*{_1},
(3.5,-7)*{_1},
(5.9,-7)*{_2},
(-5.5,-7)*{_1},
(0,-7)*{_2},
\ar @{-} "o";"1" <0pt>
\ar @{-} "o";"2" <0pt>
\ar @{-} "o";"3" <0pt>
\ar @{-} "o";"4" <0pt>
\ar @{-} "o";"5" <0pt>
\ar @{-} "o";"6" <0pt>
\ar @{-} "o";"7" <0pt>
\ar @{-} "o";"8" <0pt>
\ar @{-} "o";"9" <0pt>
\endxy}\Ea
 \in \caD\cP(2,2) \ \ \ \ , \ \ \ \
\Ba{c}\resizebox{14mm}{!}{\xy
(0,0)*{\circ}="o",
(-4,5)*{}="1",
(-1.5,5)*{}="2",
(1.5,5)*{}="3",
(4,5)*{}="4",
(-3,-5)*{}="5",
(3,-5)*{}="6",
(5,-5)*{}="7",
(-5,-5)*{}="8",
(0,-5)*{}="9",
(-4.5,6.8)*{_1},
(-1.7,6.8)*{_1},
(1.8,6.8)*{_1},
(4.6,6.8)*{_1},
(-3,-7)*{_1},
(3.5,-7)*{_1},
(5.9,-7)*{_2},
(-5.5,-7)*{_1},
(0,-7)*{_2},
(8,6.8)*{_2},
(10,-7)*{_3},
(12,-7)*{_4},
\ar @{-} "o";"1" <0pt>
\ar @{-} "o";"2" <0pt>
\ar @{-} "o";"3" <0pt>
\ar @{-} "o";"4" <0pt>
\ar @{-} "o";"5" <0pt>
\ar @{-} "o";"6" <0pt>
\ar @{-} "o";"7" <0pt>
\ar @{-} "o";"8" <0pt>
\ar @{-} "o";"9" <0pt>
\endxy}\Ea \in \f\cP(2,4)\ \ , \ \ etc.
\Eeq
We shall often represent elements of $\caD\cP$
as graphs having two types of vertices:  the small ones which are decorated by  elements of $\bar{\cP}$, and new big ones corresponding to those input, respectively, output legs of $\caD\cP$ which having the same  numerical labels,
$$
\Ba{c}\resizebox{15mm}{!}{\xy
(-9,-6)*{};
(0,0)*{\circ }
**\dir{-};
(-7.5,-6)*{};
(0,0)*{\circ }
**\dir{-};
(-6,-6)*{};
(0,0)*{\circ }
**\dir{-};
(-1,-6)*{};
(0,0)*{\circ }
**\dir{-};
(0,-6)*{};
(0,0)*{\circ }
**\dir{-};
(1,-6)*{};
(0,0)*{\circ }
**\dir{-};
(9,-6)*{};
(0,0)*{\circ }
**\dir{-};
(7.5,-6)*{};
(0,0)*{\circ }
**\dir{-};
(6,-6)*{};
(0,0)*{\circ }
**\dir{-};
(-3,-5)*{...};
(3,-5)*{...};
(-9,-7.5)*{_1};
(-7.5,-7.5)*{_1};
(-6,-7.5)*{_1};
(-1.1,-7.5)*{_i};
(1.1,-7.5)*{_i};
(0,-7.5)*{_i};
(7.8,-7.5)*{_k};
(6.3,-7.5)*{_k};
(9.6,-7.5)*{_k};
(-9,6)*{};
(0,0)*{\circ }
**\dir{-};
(-7.5,6)*{};
(0,0)*{\circ }
**\dir{-};
(-6,6)*{};
(0,0)*{\circ }
**\dir{-};
(-1,6)*{};
(0,0)*{\circ }
**\dir{-};
(0,6)*{};
(0,0)*{\circ }
**\dir{-};
(1,6)*{};
(0,0)*{\circ }
**\dir{-};
(9,6)*{};
(0,0)*{\circ }
**\dir{-};
(7.5,6)*{};
(0,0)*{\circ }
**\dir{-};
(6,6)*{};
(0,0)*{\circ }
**\dir{-};
(-3,5)*{...};
(3,5)*{...};
(-9,7.5)*{^1};
(-7.5,7.5)*{^1};
(-6,7.5)*{^1};
(-1.1,7.5)*{^j};
(1.1,7.5)*{^j};
(0,7.5)*{^j};
(7.8,7.5)*{^l};
(6.3,7.5)*{^l};
(9.6,7.5)*{^l};
\endxy}\Ea
\ \  {\simeq}
 \ \
\Ba{c}\resizebox{15mm}{!}{\xy
(-9,-6)*{};
(0,0)*{\circ }
**\dir{-};
(-7.5,-6)*{};
(0,0)*{\circ }
**\dir{-};
(-6,-6)*{};
(0,0)*{\circ }
**\dir{-};
(-1,-6)*{};
(0,0)*{\circ }
**\dir{-};
(0,-6)*{};
(0,0)*{\circ }
**\dir{-};
(1,-6)*{};
(0,0)*{\circ }
**\dir{-};
(9,-6)*{};
(0,0)*{\circ }
**\dir{-};
(7.5,-6)*{};
(0,0)*{\circ }
**\dir{-};
(6,-6)*{};
(0,0)*{\circ }
**\dir{-};
(-3,-5)*{...};
(3,-5)*{...};
(-9,6)*{};
(0,0)*{\circ }
**\dir{-};
(-7.5,6)*{};
(0,0)*{\circ }
**\dir{-};
(-6,6)*{};
(0,0)*{\circ }
**\dir{-};
(-1,6)*{};
(0,0)*{\circ }
**\dir{-};
(0,6)*{};
(0,0)*{\circ }
**\dir{-};
(1,6)*{};
(0,0)*{\circ }
**\dir{-};
(9,6)*{};
(0,0)*{\circ }
**\dir{-};
(7.5,6)*{};
(0,0)*{\circ }
**\dir{-};
(6,6)*{};
(0,0)*{\circ }
**\dir{-};
(-3,5)*{...};
(3,5)*{...};
(-8.2,-7.9)*+\hbox{${{1}}$}*\frm{o};
(8.2,-7.9)*+\hbox{${{\, k\, }}$}*\frm{o};
(0,-7.9)*+\hbox{${{\, i\, }}$}*\frm{o};
(-8.2,7.9)*+\hbox{${{1}}$}*\frm{o};
(8.2,7.9)*+\hbox{${{\, l\, }}$}*\frm{o};
(0,7.9)*+\hbox{${{\, _j\, }}$}*\frm{o}
\endxy}\Ea
$$
In this notation elements (\ref{2: examples of DP}) gets represented, respectively, as
$$
\Ba{c}\resizebox{10mm}{!}{\xy
(-2.5,5)*{}="1",
(-0.8,5)*{}="2",
(0.8,5)*{}="3",
(2.5,5)*{}="4",
 (0,0)*{\circ}="A";
 (-6,-10)*+{_1}*\frm{o}="B";
  (6,-10)*+{_2}*\frm{o}="C";
  (-4,10)*+{_1}*\frm{o}="X";
  (4,10)*+{_2}*\frm{o}="Y";
 "A"; "B" **\crv{(-5,-0)}; 
 "A"; "B" **\crv{(-5,-6)};
  "A"; "C" **\crv{(5,-0.5)};
   "A"; "B" **\crv{(5,-1)};
  "A"; "C" **\crv{(-5,-7)};
  "A"; "X" **\crv{(-6,3)}; 
 "A"; "X" **\crv{(5,6)};
  "A"; "Y" **\crv{(-4,3)}; 
 "A"; "Y" **\crv{(5,6)};
 \endxy}
 \Ea
 \ \ \ \ \ \mbox{and} \ \ \ \ \
\Ba{c}\resizebox{17mm}{!}{ \xy
(-2.5,5)*{}="1",
(-0.8,5)*{}="2",
(0.8,5)*{}="3",
(2.5,5)*{}="4",
 (0,0)*{\circ}="A";
 (-6,-10)*+{_1}*\frm{o}="B";
  (6,-10)*+{_2}*\frm{o}="C";
  (0,10)*+{_1}*\frm{o}="X";
  (7,10)*+{_2}*\frm{o};
   (12,-10)*+{_3}*\frm{o};
   (18,-10)*+{_4}*\frm{o};
 "A"; "B" **\crv{(-5,-0)}; 
 "A"; "B" **\crv{(-5,-6)};
  "A"; "C" **\crv{(5,-0.5)};
   "A"; "B" **\crv{(5,-1)};
  "A"; "C" **\crv{(-5,-7)};
"A"; "X" **\crv{(-6,3)}; 
 "A"; "X" **\crv{(-2,3)};
  "A"; "X" **\crv{(2,3)};
   "A"; "X" **\crv{(6,3)};
 \endxy}
 \Ea
$$
while the unique generator of $\CB(k,l)\subset \caD\cP(k,l)$ as a two leveled collection of solely white vertices
\Beq\label{2: graphs for Comb(l,k)}
\Ba{c}\resizebox{15mm}{!}{
\xy
(0,5)*+{_1}*\frm{o};
(6,5)*+{_2}*\frm{o};
(18,5)*+{_l}*\frm{o};
(13,5)*+{\cdots};
(0,-5)*+{_1}*\frm{o};
(6,-5)*+{_2}*\frm{o};
(18,-5)*+{_k}*\frm{o};
(13,-5)*+{\cdots};
\endxy}\Ea.
\Eeq
In particular the multiplication generator in $\CB$ is given by the graph
$\Ba{c}\resizebox{6mm}{!}{
\xy
(0,3)*+{_1}*\frm{o};
(-3,-3)*+{_1}*\frm{o};
(3,-3)*+{_2}*\frm{o};
\endxy}\Ea$ while the comultiplication generator by $\Ba{c}\resizebox{6mm}{!}{
\xy
(0,-3)*+{_1}*\frm{o};
(-3,3)*+{_1}*\frm{o};
(3,3)*+{_2}*\frm{o};
\endxy}\Ea$. These new big white vertices come in two types --- the incoming ones and outgoing ones;
we call them from now on {\em in-vertices} and, respectively, {\em out-vertices} of al element $p\in \caD\cP(l,k)$.

The $\bS$-bimodule $\caD\cP=\{\ \caD\cP(k,l)\}$ has a natural structure of a prop with respect to the
following composition operations:
\sip

(I) The {\em horizontal composition}
$$
\Ba{rccc}
\Box_h: &  \caD\cP(k,l)  \ot\caD\cP(k',l')
 &\lon & \caD\cP(k+k, l+l')\\
 & p\ot q & \lon &p \Box_h q
\Ea
$$
is given  as follows:
\Bi
\item[(i)] if $p\in \caD\cP^{J_1,...,J_l}_{I_1,...,I_k}$ and $q\in \caD\cP^{J_1',...,J'_{l'}}_{I'_1,...,I'_{k'}}$ the $\bS$-equivariance of the horizontal composition $\circ_h$ in $\cP$ assures that
$$
p\circ_h q\in \caD\cP^{J_1,...,J_l, J_1',...,J'_{l'}}_{I_1,...,I_k,I'_1,...,I'_{k'} }
$$
and we set $p\Box_h q:=p\circ_h q$.

\item[(ii)] if $p\in \caD\cP^{J_1,...,J_l}_{I_1,...,I_k}\subset \bar{P}(m,n)$ and $q$ is the generator $\Ba{c}\resizebox{11mm}{!}{
\xy
(0,5)*+{_1}*\frm{o};
(6,5)*+{_2}*\frm{o};
(18,5)*+{_{l'}}*\frm{o};
(13,5)*+{\cdots};
(0,-5)*+{_1}*\frm{o};
(6,-5)*+{_2}*\frm{o};
(18,-5)*+{_{k'}}*\frm{o};
(13,-5)*+{\cdots};
\endxy}\Ea$ of $\CB(l',k')$, then we define
$$
p\Box_h q:= \pi^{J_1,...,J_l, J_1',...,J'_{l'}}_{I_1,...,I_k,I'_1,...,I'_{k'} }(p) \in \caD\cP^{J_1,...,J_l, J_1',...,J'_{l'}}_{I_1,...,I_k,I'_1,...,I'_{k'} }
$$
where $J_1'=...=J'_{l'}=I'_1=...=I'_{k'}=\emptyset$. Similarly one defines $q\Box_h p$.
\item[(iii)] if $p\in \CB(l,k)$ and $q\in \CB(l',k')$ then $p\Box_h q$ is defined to be the horizontal composition of these elements in the prop $\CB$.

\Ei

\sip

(II) The {\em vertical composition}
$$
\Ba{rccc}
\Box_v: & \caD\cP(l,n) \ot \caD\cP(n,k) &\lon & \caD\cP(l,k)\\
 & \Ga\ot \Ga' & \lon & \Ga\Box_v\Ga',
\Ea
$$
by the following three step procedure:
 \Bi
 \item[(1)] erase all $n$ out-vertices of $\Ga'$ and all $n$ in-vertices of $\Ga$; this procedure
 leave  output legs of $\Ga'$ (which are labelled by elements of the set $[n]$) and input legs
 of $\Ga$ (which are also labelled by elements of the same set $[n]$) ``hanging in the air";

 \item[(2)] take the sum over all possible ways of
 \Bi
 \item[(2i)]
 attaching a number of the hanging out-legs of $\Ga'$ to the same number of hanging in-legs of $\Ga$ {\em with the same numerical label},
 \item[(2ii)] attaching the remaining hanging edges of $\Ga'$ to the out-vertices of $\Ga$,
 \item[(2iii)] attaching the remaining (after step (2i)) hanging in-legs of $\Ga$ to the in-vertices of $\Ga'$;
\Ei
 \item[(3)] perform the vertical composition (along new internal edges in step (2i)) and horizontal composition (of all remaining disjoint internal vertices) a in $\cP$ to get a unique internal vertex in $\Ga\Box_v\Ga'$.
\Ei

Often props $\cP$ are defined as quotients of  free props $\cF ree \langle E\rangle$ generated by some $\bS$-bimodules $E$ modulo  ideals $\cR$ of relations; in this representation elements of $\cP$ (and hence internal vertices of elements from $\caD\cP$) are themselves graphs (modulo some equivalence relation). Therefore it is enough to understand the above compositions $\Box_h$ and $\Box_v$ for props of the form $\cP=\cF ree\langle E\rangle$ in which case one has, for example,
$$
\Ba{c}\resizebox{10mm}{!}{\xy
(-2.5,5)*{}="1",
(-0.8,5)*{}="2",
(0.8,5)*{}="3",
(2.5,5)*{}="4",
 (0,0)*{\circ}="A";
 (-6,-10)*+{_1}*\frm{o}="B";
  (6,-10)*+{_2}*\frm{o}="C";
  (-4,10)*+{_1}*\frm{o}="X";
  (4,10)*+{_2}*\frm{o}="Y";
 "A"; "B" **\crv{(-5,-0)}; 
 "A"; "B" **\crv{(-5,-6)};
  "A"; "C" **\crv{(5,-0.5)};
   "A"; "B" **\crv{(5,-1)};
  "A"; "C" **\crv{(-5,-7)};
  "A"; "X" **\crv{(-6,3)}; 
 "A"; "X" **\crv{(5,6)};
  "A"; "Y" **\crv{(-4,3)}; 
 "A"; "Y" **\crv{(5,6)};
 \endxy}
 \Ea
 \ \ \ \Box_h \
\Ba{c}\resizebox{17mm}{!}{ \xy
(-2.5,5)*{}="1",
(-0.8,5)*{}="2",
(0.8,5)*{}="3",
(2.5,5)*{}="4",
 (0,0)*{\circ}="A";
 (-6,-10)*+{_1}*\frm{o}="B";
  (6,-10)*+{_2}*\frm{o}="C";
  (0,10)*+{_1}*\frm{o}="X";
  (7,10)*+{_2}*\frm{o};
   (12,-10)*+{_3}*\frm{o};
   (18,-10)*+{_4}*\frm{o};
 "A"; "B" **\crv{(-5,-0)}; 
 "A"; "B" **\crv{(-5,-6)};
  "A"; "C" **\crv{(5,-0.5)};
   "A"; "B" **\crv{(5,-1)};
  "A"; "C" **\crv{(-5,-7)};
"A"; "X" **\crv{(-6,3)}; 
 "A"; "X" **\crv{(-2,3)};
  "A"; "X" **\crv{(2,3)};
   "A"; "X" **\crv{(6,3)};
 \endxy}
 \Ea
 = \Ba{c}\resizebox{10mm}{!}{\xy
(-2.5,5)*{}="1",
(-0.8,5)*{}="2",
(0.8,5)*{}="3",
(2.5,5)*{}="4",
 (0,0)*{\circ}="A";
 (-6,-10)*+{_1}*\frm{o}="B";
  (6,-10)*+{_2}*\frm{o}="C";
  (-4,10)*+{_1}*\frm{o}="X";
  (4,10)*+{_2}*\frm{o}="Y";
 "A"; "B" **\crv{(-5,-0)}; 
 "A"; "B" **\crv{(-5,-6)};
  "A"; "C" **\crv{(5,-0.5)};
   "A"; "B" **\crv{(5,-1)};
  "A"; "C" **\crv{(-5,-7)};
  "A"; "X" **\crv{(-6,3)}; 
 "A"; "X" **\crv{(5,6)};
  "A"; "Y" **\crv{(-4,3)}; 
 "A"; "Y" **\crv{(5,6)};
 \endxy}
 \Ea
\Ba{c}\resizebox{17mm}{!}{ \xy
(-2.5,5)*{}="1",
(-0.8,5)*{}="2",
(0.8,5)*{}="3",
(2.5,5)*{}="4",
 (0,0)*{\circ}="A";
 (-6,-10)*+{_3}*\frm{o}="B";
  (6,-10)*+{_4}*\frm{o}="C";
  (0,10)*+{_3}*\frm{o}="X";
  (7,10)*+{_4}*\frm{o};
   (12,-10)*+{_5}*\frm{o};
   (18,-10)*+{_5}*\frm{o};
 "A"; "B" **\crv{(-5,-0)}; 
 "A"; "B" **\crv{(-5,-6)};
  "A"; "C" **\crv{(5,-0.5)};
   "A"; "B" **\crv{(5,-1)};
  "A"; "C" **\crv{(-5,-7)};
"A"; "X" **\crv{(-6,3)}; 
 "A"; "X" **\crv{(-2,3)};
  "A"; "X" **\crv{(2,3)};
   "A"; "X" **\crv{(6,3)};
 \endxy}
 \Ea
$$
and
$$
\Ba{c}
\resizebox{9mm}{!}{\xy
 (0,8)*{\circ}="a",
 (-5,13)*+{_1}*\frm{o}="b_1";
(5,13)*+{_2}*\frm{o}="b_2",
(0,2)*+{_1}*\frm{o}="0",
\ar @{-} "a";"0" <0pt>
\ar @{-} "a";"b_1" <0pt>
\ar @{-} "a";"b_2" <0pt>
\endxy} \Ea
 \Box_v
\Ba{c}\resizebox{9mm}{!}{ \xy
 (0,7)*{\circ}="a",
  (-5,2)*+{_1}*\frm{o}="b_1";
(5,2)*+{_2}*\frm{o}="b_2",
(0,13)*+{_1}*\frm{o}="0",
\ar @{-} "a";"0" <0pt>
\ar @{-} "a";"b_1" <0pt>
\ar @{-} "a";"b_2" <0pt>
\endxy}\Ea
\ \ = \ \
\Ba{c}
\resizebox{8.8mm}{!}{\xy
(0,13)*{\circ}="0",
 (0,7)*{\circ}="a",
 (-5,2)*+{_1}*\frm{o}="b_1";
(5,2)*+{_2}*\frm{o}="b_2",
(-5,18)*+{_1}*\frm{o}="u_1",
(5,18)*+{_2}*\frm{o}="u_2",
\ar @{-} "a";"0" <0pt>
\ar @{-} "a";"b_1" <0pt>
\ar @{-} "a";"b_2" <0pt>
\ar @{-} "0";"u_1" <0pt>
\ar @{-} "0";"u_2" <0pt>
\endxy}
\Ea\ \ \ +\ \ \
\Ba{c}
\resizebox{8.4mm}{!}{\xy
(4,10)*{\circ}="0",
 (-4,8)*{\circ}="a",
(-5,2)*+{_1}*\frm{o}="b_1";
(5,2)*+{_2}*\frm{o}="b_2",
(-5,18)*+{_1}*\frm{o}="u_1",
(5,18)*+{_2}*\frm{o}="u_2",
\ar @{-} "b_1";"0" <0pt>
\ar @{-} "u_1";"a" <0pt>
\ar @{-} "a";"b_1" <0pt>
\ar @{-} "a";"b_2" <0pt>
\ar @{-} "0";"u_1" <0pt>
\ar @{-} "0";"u_2" <0pt>
\endxy}
\Ea\ \ \ +\ \ \
\Ba{c}
\resizebox{8.4mm}{!}{\xy
(4,10)*{\circ}="0",
 (-4,8)*{\circ}="a",
  (-5,2)*+{_1}*\frm{o}="b_1";
(5,2)*+{_2}*\frm{o}="b_2",
(-5,18)*+{_1}*\frm{o}="u_1",
(5,18)*+{_2}*\frm{o}="u_2",
\ar @{-} "b_2";"0" <0pt>
\ar @{-} "u_1";"a" <0pt>
\ar @{-} "a";"b_1" <0pt>
\ar @{-} "a";"b_2" <0pt>
\ar @{-} "0";"u_1" <0pt>
\ar @{-} "0";"u_2" <0pt>
\endxy}
\Ea\ \ \ +\ \ \
\Ba{c}
\resizebox{8.4mm}{!}{\xy
(4,10)*{\circ}="0",
 (-4,8)*{\circ}="a",
 (-5,2)*+{_1}*\frm{o}="b_1";
(5,2)*+{_2}*\frm{o}="b_2",
(-5,18)*+{_1}*\frm{o}="u_1",
(5,18)*+{_2}*\frm{o}="u_2",
\ar @{-} "b_2";"0" <0pt>
\ar @{-} "u_2";"a" <0pt>
\ar @{-} "a";"b_1" <0pt>
\ar @{-} "a";"b_2" <0pt>
\ar @{-} "0";"u_1" <0pt>
\ar @{-} "0";"u_2" <0pt>
\endxy}
\Ea\ \ \ +\ \ \
\Ba{c}
\resizebox{8.4mm}{!}{\xy
(4,10)*{\circ}="0",
 (-4,8)*{\circ}="a",
 (-5,2)*+{_1}*\frm{o}="b_1";
(5,2)*+{_2}*\frm{o}="b_2",
(-5,18)*+{_1}*\frm{o}="u_1",
(5,18)*+{_2}*\frm{o}="u_2",
\ar @{-} "b_1";"0" <0pt>
\ar @{-} "u_2";"a" <0pt>
\ar @{-} "a";"b_1" <0pt>
\ar @{-} "a";"b_2" <0pt>
\ar @{-} "0";"u_1" <0pt>
\ar @{-} "0";"u_2" <0pt>
\endxy}\Ea
$$
It is worth emphasizing that all graphs above are {\em oriented}\, in the sense that every edge
is directed (but there are now directed paths of directed edge forming a closed path). We do not show directions in our pictures assuming that the flow goes by default from bottom to the top. For example,
the graph $\Ba{c}\resizebox{8.4mm}{!}{\xy
(4,10)*{\circ}="0",
 (-4,8)*{\circ}="a",
 (-5,2)*+{_1}*\frm{o}="b_1";
(5,2)*+{_2}*\frm{o}="b_2",
(-5,18)*+{_1}*\frm{o}="u_1",
(5,18)*+{_2}*\frm{o}="u_2",
\ar @{-} "b_1";"0" <0pt>
\ar @{-} "u_2";"a" <0pt>
\ar @{-} "a";"b_1" <0pt>
\ar @{-} "a";"b_2" <0pt>
\ar @{-} "0";"u_1" <0pt>
\ar @{-} "0";"u_2" <0pt>
\endxy}\Ea$ stands in fact for the directed graph $\Ba{c}\resizebox{8.4mm}{!}{\xy
(4,10)*{\circ}="0",
 (-4,8)*{\circ}="a",
 (-5,2)*+{_1}*\frm{o}="b_1";
(5,2)*+{_2}*\frm{o}="b_2",
(-5,18)*+{_1}*\frm{o}="u_1",
(5,18)*+{_2}*\frm{o}="u_2",
\ar @{->} "b_1";"0" <0pt>
\ar @{<-} "u_2";"a" <0pt>
\ar @{<-} "a";"b_1" <0pt>
\ar @{<-} "a";"b_2" <0pt>
\ar @{->} "0";"u_1" <0pt>
\ar @{->} "0";"u_2" <0pt>
\endxy}\Ea$.

Let us summarize the above discussion in the following proposition-definition.

\bip

\subsubsection{\bf Proposition-definition}\label{2: Proposition on DP and its representations} {\em (i) There is an endofunctor $\caD$ in the category of augmented props. The value $\caD\cP$ of the endofunctor $\caD$
on an augmented prop $\cP$
is called the  {\em polydifferential prop associated to $\cP$}.

\mip

(ii) Any representation of the augmented prop $\cP$ in a graded vector space $V$ induces canonically an associated
representation of the polydifferential $\caD\cP$ in $\odot^\bu V$ given by polydifferential operators.

\sip

(iii) For any augmented prop $\cP$ there is a canonical injection of props,
\Beq\label{2: from Commb to DP}
i:\CB \lon \caD\cP
\Eeq
sending the generator $1$ of {\em $\CB(l,k)=\id_l\ot \id_k=\K$} to the graph (\ref{2: graphs for Comb(l,k)}).
}

\subsubsection{\bf Remark} The suboperad $\{\caD\cP(1,n)\}_{n\geq 1}$ of the prop $\caD\cP$ has been studied earlier in \cite{MW} where it was denoted by $\f\cP$.

\subsubsection{\bf Proposition} {\em The endofunctor $\caD$ is exact}.

\begin{proof} Due to the classical Maschke's theorem, the functor of taking invariants and/or coinvariants in the category of representations
 of finite groups in dg vector spaces over a field of characteristic zero is exact. Hence the required claim.
\end{proof}

\subsection{Polydifferential props and hypergraphs} In the definition of the polydifferential prop $\caD\cP$ horizontal compositions in a prop $\cP$  play as important role as vertical ones. Therefore,
to apply apply the polydifferential functor to a {\em properad}\, (or operad) $\cP$
one has to take first its enveloping prop $\cU\cP$ and then apply the endofunctor $\caD$ to the latter; for an augmented properad $\cP$ we understand the prop $\cU\cP$ as $\K \oplus \cU\bar{\cP}$ and define
$$
\caD\cP:=\caD(\cU{\cP}).
$$
The case of properads is of special interest as elements of $\caD\cP$ can be understood now as {\em hypergraphs}, that is, generalizations of graphs in which edges can connect more than two vertices
(cf.\ \cite{MW}).
For example, elements of a properad $\cP$
$$
p=
\Ba{c}\resizebox{8mm}{!}{ \xy
(0,0)*{\circ}="o",
(-1.5,5)*{}="2",
(1.5,5)*{}="3",
(-1.5,-5)*{}="5",
(1.5,-5)*{}="6",
(4,-5)*{}="7",
(-4,-5)*{}="8",
(-1.5,7)*{_1},
(1.5,7)*{_2},
(-1.5,-7)*{_2},
(1.5,-7)*{_3},
(4,-7)*{_4},
(-4,-7)*{_1},
\ar @{-} "o";"2" <0pt>
\ar @{-} "o";"3" <0pt>
\ar @{-} "o";"5" <0pt>
\ar @{-} "o";"6" <0pt>
\ar @{-} "o";"7" <0pt>
\ar @{-} "o";"8" <0pt>
\endxy}\Ea\in \cP(2,4) \ \ \ \mbox{and} \ \ \ \
q=\Ba{c}\resizebox{7mm}{!}{  \xy
(0,0)*{\circ}="o",
(0,5)*{}="1",
(-3,-5)*{}="2",
(3,-5)*{}="3",
(0,-5)*{}="4",
(0,7)*{_1},
(3.5,-7)*{_3},
(-3.5,-7)*{_1},
(0,-7)*{_2},
\ar @{-} "o";"1" <0pt>
\ar @{-} "o";"2" <0pt>
\ar @{-} "o";"3" <0pt>
\ar @{-} "o";"4" <0pt>
\endxy}\Ea\in \cP(1,3)
$$
generate  an element
$$
\Ba{c}\resizebox{8mm}{!}{  \xy
(-2,0)*{^p};
(0,0)*{\circ}="o",
(-2,5)*{}="2",
(2,5)*{}="3",
(-1.5,-5)*{}="5",
(1.5,-5)*{}="6",
(4,-5)*{}="7",
(-4,-5)*{}="8",
(-2,7)*{_1},
(2,7)*{_2},
(-1.5,-7)*{_2},
(1.5,-7)*{_3},
(4,-7)*{_4},
(-4,-7)*{_1},
\ar @{-} "o";"2" <0pt>
\ar @{-} "o";"3" <0pt>
\ar @{-} "o";"5" <0pt>
\ar @{-} "o";"6" <0pt>
\ar @{-} "o";"7" <0pt>
\ar @{-} "o";"8" <0pt>
\endxy}\Ea
\Ba{c}\resizebox{7.3mm}{!}{ \xy
(2,0)*{^q};
(0,0)*{\circ}="o",
(0,5)*{}="1",
(-3,-5)*{}="2",
(3,-5)*{}="3",
(0,-5)*{}="4",
(0,7)*{_3},
(3.5,-7)*{_7},
(-3.5,-7)*{_5},
(0,-7)*{_6},
\ar @{-} "o";"1" <0pt>
\ar @{-} "o";"2" <0pt>
\ar @{-} "o";"3" <0pt>
\ar @{-} "o";"4" <0pt>
\endxy}\Ea\ \in \cU\cP(3,7)
$$
which in turn generates  an element
$$
\Ba{c}\resizebox{15mm}{!}{  \xy
(-2,0.8)*{^p};
(11,0.8)*{^q};
(-1.5,8)*+{_1}*\frm{o}="1",
(1.5,8)*{}="2",
(5,8)*+{_2}*\frm{o}="3",
 (0,0)*{\circ}="A";
  (9,0)*{\circ}="O";
 (-6,-10)*+{_1}*\frm{o}="B";
  (6,-10)*+{_2}*\frm{o}="C";
   (14,-10)*+{_3}*\frm{o}="D";
 "A"; "B" **\crv{(-5,-0)}; 
  "A"; "D" **\crv{(5,-0.5)};
   "A"; "B" **\crv{(5,-1)};
  "A"; "C" **\crv{(-5,-7)};
  \ar @{-} "A";"1" <0pt>
\ar @{-} "A";"3" <0pt>
\ar @{-} "O";"C" <0pt>
\ar @{-} "O";"D" <0pt>
\ar @{-} "O";"3" <0pt>
 \endxy}
 \Ea \in \caD\cP(2,3)
$$
which looks like a real graph with many vertices of two types --- the small ones which are decorated
by elements of the properad $\cP$ and big ones corresponding to the inputs and outputs of the properad $\caD\cP$.
Sometimes it is useful to understand such a graph as a {\em hypergraph} with  small vertices
playing the role of {\em hyperedges} (cf. \cite{MW}).

%

\sip

We consider below several concrete examples of polydifferential props and their applications.

\subsection{A note on deformation complexes and Lie algebras of derivations of prop(erad)s} Our definitions of these notions are slightly non-standard, but only with these non-standard definitions the $\caL ie_\infty$
claim (ii) in the Main theorem {\ref{1: Main Theorem}} holds true.

\sip

Let $(cP,\delta)$ be an arbitrary dg prop(erad), and $\cP^+$ be the prop(erad) freely generated by $\cP$ and one extra operation
$\begin{xy}
 <0mm,-0.55mm>*{};<0mm,-3mm>*{}**@{-},
 <0mm,0.5mm>*{};<0mm,3mm>*{}**@{-},
 <0mm,0mm>*{\bullet};<0mm,0mm>*{}**@{},
 \end{xy}$ of arity $(1,1)$ and of cohomological degree $+1$. We make $\cP^+$ into a {\em differential}\,
 prop(erad)   by setting the value of the differential $\delta^+$  on the new generator by
 $$
 \delta^+ \begin{xy}
 <0mm,-0.55mm>*{};<0mm,-3mm>*{}**@{-},
 <0mm,0.5mm>*{};<0mm,3mm>*{}**@{-},
 <0mm,0mm>*{\bullet};<0mm,0mm>*{}**@{},
 \end{xy} := \Ba{c} \begin{xy}
 <0mm,0mm>*{};<0mm,-3mm>*{}**@{-},
 <0mm,0mm>*{};<0mm,6mm>*{}**@{-},
 <0mm,0mm>*{\bullet};
 <0mm,3mm>*{\bullet};
 \end{xy}
 \Ea
 $$
 and on any other element $a\in \cP(m,n)$ (which we identify pictorially with the $(m,n)$-corolla
 whose vertex is decorated with $a$) by the formula
 $$
 \delta^+
 \begin{xy}
 <0mm,0mm>*{\bullet};<0mm,0mm>*{}**@{},
 <0mm,0mm>*{};<-8mm,5mm>*{}**@{-},
 <0mm,0mm>*{};<-4.5mm,5mm>*{}**@{-},
 <0mm,0mm>*{};<-1mm,5mm>*{\ldots}**@{},
 <0mm,0mm>*{};<4.5mm,5mm>*{}**@{-},
 <0mm,0mm>*{};<8mm,5mm>*{}**@{-},
   <0mm,0mm>*{};<-8.5mm,5.5mm>*{^1}**@{},
   <0mm,0mm>*{};<-5mm,5.5mm>*{^2}**@{},
   <0mm,0mm>*{};<4.5mm,5.5mm>*{^{m\hspace{-0.5mm}-\hspace{-0.5mm}1}}**@{},
   <0mm,0mm>*{};<9.0mm,5.5mm>*{^m}**@{},
 <0mm,0mm>*{};<-8mm,-5mm>*{}**@{-},
 <0mm,0mm>*{};<-4.5mm,-5mm>*{}**@{-},
 <0mm,0mm>*{};<-1mm,-5mm>*{\ldots}**@{},
 <0mm,0mm>*{};<4.5mm,-5mm>*{}**@{-},
 <0mm,0mm>*{};<8mm,-5mm>*{}**@{-},
   <0mm,0mm>*{};<-8.5mm,-6.9mm>*{^1}**@{},
   <0mm,0mm>*{};<-5mm,-6.9mm>*{^2}**@{},
   <0mm,0mm>*{};<4.5mm,-6.9mm>*{^{n\hspace{-0.5mm}-\hspace{-0.5mm}1}}**@{},
   <0mm,0mm>*{};<9.0mm,-6.9mm>*{^n}**@{},
 \end{xy}:= \delta
\begin{xy}
 <0mm,0mm>*{\bullet};<0mm,0mm>*{}**@{},
 <0mm,0mm>*{};<-8mm,5mm>*{}**@{-},
 <0mm,0mm>*{};<-4.5mm,5mm>*{}**@{-},
 <0mm,0mm>*{};<-1mm,5mm>*{\ldots}**@{},
 <0mm,0mm>*{};<4.5mm,5mm>*{}**@{-},
 <0mm,0mm>*{};<8mm,5mm>*{}**@{-},
   <0mm,0mm>*{};<-8.5mm,5.5mm>*{^1}**@{},
   <0mm,0mm>*{};<-5mm,5.5mm>*{^2}**@{},
   <0mm,0mm>*{};<4.5mm,5.5mm>*{^{m\hspace{-0.5mm}-\hspace{-0.5mm}1}}**@{},
   <0mm,0mm>*{};<9.0mm,5.5mm>*{^m}**@{},
 <0mm,0mm>*{};<-8mm,-5mm>*{}**@{-},
 <0mm,0mm>*{};<-4.5mm,-5mm>*{}**@{-},
 <0mm,0mm>*{};<-1mm,-5mm>*{\ldots}**@{},
 <0mm,0mm>*{};<4.5mm,-5mm>*{}**@{-},
 <0mm,0mm>*{};<8mm,-5mm>*{}**@{-},
   <0mm,0mm>*{};<-8.5mm,-6.9mm>*{^1}**@{},
   <0mm,0mm>*{};<-5mm,-6.9mm>*{^2}**@{},
   <0mm,0mm>*{};<4.5mm,-6.9mm>*{^{n\hspace{-0.5mm}-\hspace{-0.5mm}1}}**@{},
   <0mm,0mm>*{};<9.0mm,-6.9mm>*{^n}**@{},
 \end{xy}
+
\overset{m-1}{\underset{i=0}{\sum}}
\begin{xy}
 <0mm,0mm>*{\bullet};<0mm,0mm>*{}**@{},
 <0mm,0mm>*{};<-8mm,5mm>*{}**@{-},
 <0mm,0mm>*{};<-3.5mm,5mm>*{}**@{-},
 <0mm,0mm>*{};<-6mm,5mm>*{..}**@{},
 <0mm,0mm>*{};<0mm,5mm>*{}**@{-},
  <0mm,5mm>*{\bullet};
  <0mm,5mm>*{};<0mm,8mm>*{}**@{-},
  <0mm,5mm>*{};<0mm,9mm>*{^{i\hspace{-0.2mm}+\hspace{-0.5mm}1}}**@{},
<0mm,0mm>*{};<8mm,5mm>*{}**@{-},
<0mm,0mm>*{};<3.5mm,5mm>*{}**@{-},
 <0mm,0mm>*{};<6mm,5mm>*{..}**@{},
   <0mm,0mm>*{};<-8.5mm,5.5mm>*{^1}**@{},
   <0mm,0mm>*{};<-4mm,5.5mm>*{^i}**@{},
   <0mm,0mm>*{};<9.0mm,5.5mm>*{^m}**@{},
 <0mm,0mm>*{};<-8mm,-5mm>*{}**@{-},
 <0mm,0mm>*{};<-4.5mm,-5mm>*{}**@{-},
 <0mm,0mm>*{};<-1mm,-5mm>*{\ldots}**@{},
 <0mm,0mm>*{};<4.5mm,-5mm>*{}**@{-},
 <0mm,0mm>*{};<8mm,-5mm>*{}**@{-},
   <0mm,0mm>*{};<-8.5mm,-6.9mm>*{^1}**@{},
   <0mm,0mm>*{};<-5mm,-6.9mm>*{^2}**@{},
   <0mm,0mm>*{};<4.5mm,-6.9mm>*{^{n\hspace{-0.5mm}-\hspace{-0.5mm}1}}**@{},
   <0mm,0mm>*{};<9.0mm,-6.9mm>*{^n}**@{},
 \end{xy}
 - (-1)^{|a|}
\overset{n-1}{\underset{i=0}{\sum}}
 \begin{xy}
 <0mm,0mm>*{\bullet};<0mm,0mm>*{}**@{},
 <0mm,0mm>*{};<-8mm,-5mm>*{}**@{-},
 <0mm,0mm>*{};<-3.5mm,-5mm>*{}**@{-},
 <0mm,0mm>*{};<-6mm,-5mm>*{..}**@{},
 <0mm,0mm>*{};<0mm,-5mm>*{}**@{-},
  <0mm,-5mm>*{\bullet};
  <0mm,-5mm>*{};<0mm,-8mm>*{}**@{-},
  <0mm,-5mm>*{};<0mm,-10mm>*{^{i\hspace{-0.2mm}+\hspace{-0.5mm}1}}**@{},
<0mm,0mm>*{};<8mm,-5mm>*{}**@{-},
<0mm,0mm>*{};<3.5mm,-5mm>*{}**@{-},
 <0mm,0mm>*{};<6mm,-5mm>*{..}**@{},
   <0mm,0mm>*{};<-8.5mm,-6.9mm>*{^1}**@{},
   <0mm,0mm>*{};<-4mm,-6.9mm>*{^i}**@{},
   <0mm,0mm>*{};<9.0mm,-6.9mm>*{^n}**@{},
 <0mm,0mm>*{};<-8mm,5mm>*{}**@{-},
 <0mm,0mm>*{};<-4.5mm,5mm>*{}**@{-},
 <0mm,0mm>*{};<-1mm,5mm>*{\ldots}**@{},
 <0mm,0mm>*{};<4.5mm,5mm>*{}**@{-},
 <0mm,0mm>*{};<8mm,5mm>*{}**@{-},
   <0mm,0mm>*{};<-8.5mm,5.5mm>*{^1}**@{},
   <0mm,0mm>*{};<-5mm,5.5mm>*{^2}**@{},
   <0mm,0mm>*{};<4.5mm,5.5mm>*{^{m\hspace{-0.5mm}-\hspace{-0.5mm}1}}**@{},
   <0mm,0mm>*{};<9.0mm,5.5mm>*{^m}**@{},
 \end{xy}.
 $$
The dg prop(erad) $(\cP^+, \delta^+)$ is uniquely characterized by the fact that there is a 1-1 correspondence between representations
 $$
 \rho: \cP^+ \lon \cE nd_V
 $$
of $(\cP^+, \delta^+)$ in a dg vector space $(V,d)$, and representations of $\cP$ in the same space $V$
but equipped with a deformed differential $d+d'$, where $d':=\rho(\begin{xy}
 <0mm,-0.55mm>*{};<0mm,-3mm>*{}**@{-},
 <0mm,0.5mm>*{};<0mm,3mm>*{}**@{-},
 <0mm,0mm>*{\bullet};<0mm,0mm>*{}**@{},
 \end{xy})$.
Clearly any $\cP$-algebra is a $\cP^+$-algebra (with $d'=0$) so that there a canonical epimorphism
$\pi: \cP^+\to \cP$ of dg prop(erad)s; we also have a canonical monomorphism $\cP\rar \cP^+$ of {\em non-differential}\, prop(erad)s.
We define the dg Lie algebra, $\Der(\cP)$, of derivations of $(\cP,\delta)$ as the {\em complex of derivations of $(\cP^+,\delta^+)$ with values in
$\cP$}, that is, as the Lie algebra generated by linear maps
$$
D: \cP^+ \lon \cP
$$
which satisfy the condition
$$
D(a \boxtimes_{1,1} b)= D(a) \boxtimes_{1,1} \pi(b) + (-1)^{|a||D|} \pi(a)\boxtimes_{1,1} D(b), \ \ \
 \forall \ a,b\in \cP^+,
$$
where $\boxtimes_{1,1}$ stands \cite{Va} for the properadic composition along any connected oriented 2-vertex graph. The differential $d$ in $\Der(\cP)$ is given by the formula
$$
dD:= \delta\circ D  - (-1)^{|D|}  D\circ \delta^+.
$$

Let next $f: \cP \rar \cQ$ be a morphism of dg prop(erad)s. Its deformation complex $Def(\cP \stackrel{f}{\rar} \cQ)$
 is a $\caL ie _\infty$ algebra which is defined in full details (and in two different ways) in \cite{MV} with the help of a cofibrant resolution $\tilde{\cP}$ of the prop $\cP$. Most often
 $\tilde{\cP}$ is equal to (the prop enveloping of) the cobar construction  $\Omega(\cC)=\cF ree\langle\overline \cC[-1]\rangle$ of some coaugmented homotopy coproperad $\cC$ (with the cokernel denoted by $\overline \cC$), in which case one has an isomorphism of $\bS$-modules,
 $$
  Def(\cP \stackrel{f}{\rar} \cQ)\equiv  Def(\cP \stackrel{f}{\rar} \cQ) \simeq \prod_{m,n\geq 1}
  \Hom_{\bS_m^{op}\times \bS_n}\left(\overline{C}(m,n), Q(m,n)  \right).
 $$
 Again we need a slightly extended version of the deformation functor,
 $$
  \Def(\cP \stackrel{f}{\rar} \cQ):=Def(\cP \stackrel{f}{\rar} \cQ) \oplus Q(1,1)[1] \simeq  \prod_{m,n\geq 1}
  \Hom_{\bS_m^{op}\times \bS_n}\left({C}(m,n), Q(m,n)  \right).
 $$
which includes an extra degree of freedom for deformations of the differentials. With this definition we have an isomorphism of complexes
$$
\Der(P)=\Def(\cP\stackrel{\Id}{\rar}\cP)[1]
$$
relating our extended version of the complex derivations with the extended version of the deformations of the identity map.  Note however that this isomorphism does {\em not}\, respect Lie brackets ---
they are quite different (with the r.h.s. being in general a $\caL ie_\infty$ algebra, while the l.h.s.\ being in a general a dg Lie algebra).

\sip

Finally we warn the reader that the formality theorem for quantization of Lie bialgebras as formulated
in item (ii) of Theorem {\ref{1: Main Theorem}} holds true only for our extended version $\Def$ of the deformation functor $Def$.




\bip

\bip

{\Large
\section{\bf Associative bialgebras, Lie bialgebras and their deformation complexes}
}

\bip

\subsection{A properad of Lie bialgebras} A Lie bialgebra is a graded vector space $V$
equipped with compatible Lie brackets
$$
[\ ,\ ]: \wedge^2 V \rar V, \ \ \ \ \ \ \vartriangle: V \rar \wedge^2 V
$$
The compatibility relations are best described in terms of the properad $\LB$ governing Lie bialgebras via its representations $\rho_0: \LB \rar \cE nd_V$ which send one generator to the Lie bracket and another generator to the Lie cobracket,
$$
\rho_0\left( \begin{xy}
 <0mm,0.66mm>*{};<0mm,3mm>*{}**@{-},
 <0.39mm,-0.39mm>*{};<2.2mm,-2.2mm>*{}**@{-},
 <-0.35mm,-0.35mm>*{};<-2.2mm,-2.2mm>*{}**@{-},
 <0mm,0mm>*{\bu};<0mm,0mm>*{}**@{},
 \end{xy}\right) =[\, , \, ]
 \ \ \  , \ \ \
\rho_0\left(
 \begin{xy}
 <0mm,-0.55mm>*{};<0mm,-2.5mm>*{}**@{-},
 <0.5mm,0.5mm>*{};<2.2mm,2.2mm>*{}**@{-},
 <-0.48mm,0.48mm>*{};<-2.2mm,2.2mm>*{}**@{-},
 <0mm,0mm>*{\bu};<0mm,0mm>*{}**@{},
 \end{xy}\right) = \vartriangle \ .
$$
More precisely, the properad $\LB$ is defined \cite{D} as a quotient,
$$
\LB:= {\cF ree \langle E_0\rangle}/(R)
$$
of the free properad generated by an $\bS$-bimodule $E_0=\{E_0(m,n)\}$,
\[
E_0(m,n):=\left\{
\Ba{rr}
sgn_2\ot \id_1\equiv\mbox{span}\left\langle
\begin{xy}
 <0mm,-0.55mm>*{};<0mm,-2.5mm>*{}**@{-},
 <0.5mm,0.5mm>*{};<2.2mm,2.2mm>*{}**@{-},
 <-0.48mm,0.48mm>*{};<-2.2mm,2.2mm>*{}**@{-},
 <0mm,0mm>*{\bu};<0mm,0mm>*{}**@{},
 <0mm,-0.55mm>*{};<0mm,-3.8mm>*{_1}**@{},
 <0.5mm,0.5mm>*{};<2.7mm,2.8mm>*{^2}**@{},
 <-0.48mm,0.48mm>*{};<-2.7mm,2.8mm>*{^1}**@{},
 \end{xy}
=-
\begin{xy}
 <0mm,-0.55mm>*{};<0mm,-2.5mm>*{}**@{-},
 <0.5mm,0.5mm>*{};<2.2mm,2.2mm>*{}**@{-},
 <-0.48mm,0.48mm>*{};<-2.2mm,2.2mm>*{}**@{-},
 <0mm,0mm>*{\bu};<0mm,0mm>*{}**@{},
 <0mm,-0.55mm>*{};<0mm,-3.8mm>*{_1}**@{},
 <0.5mm,0.5mm>*{};<2.7mm,2.8mm>*{^1}**@{},
 <-0.48mm,0.48mm>*{};<-2.7mm,2.8mm>*{^2}**@{},
 \end{xy}
   \right\rangle  & \mbox{if}\ m=2, n=1,\vspace{3mm}\\
\id_1\ot sgn_2\equiv
\mbox{span}\left\langle
\begin{xy}
 <0mm,0.66mm>*{};<0mm,3mm>*{}**@{-},
 <0.39mm,-0.39mm>*{};<2.2mm,-2.2mm>*{}**@{-},
 <-0.35mm,-0.35mm>*{};<-2.2mm,-2.2mm>*{}**@{-},
 <0mm,0mm>*{\bu};<0mm,0mm>*{}**@{},
   <0mm,0.66mm>*{};<0mm,3.4mm>*{^1}**@{},
   <0.39mm,-0.39mm>*{};<2.9mm,-4mm>*{^2}**@{},
   <-0.35mm,-0.35mm>*{};<-2.8mm,-4mm>*{^1}**@{},
\end{xy}=-
\begin{xy}
 <0mm,0.66mm>*{};<0mm,3mm>*{}**@{-},
 <0.39mm,-0.39mm>*{};<2.2mm,-2.2mm>*{}**@{-},
 <-0.35mm,-0.35mm>*{};<-2.2mm,-2.2mm>*{}**@{-},
 <0mm,0mm>*{\bu};<0mm,0mm>*{}**@{},
   <0mm,0.66mm>*{};<0mm,3.4mm>*{^1}**@{},
   <0.39mm,-0.39mm>*{};<2.9mm,-4mm>*{^1}**@{},
   <-0.35mm,-0.35mm>*{};<-2.8mm,-4mm>*{^2}**@{},
\end{xy}
\right\rangle
\ & \mbox{if}\ m=1, n=2, \vspace{3mm}\\
0 & \mbox{otherwise}
\Ea
\right.
\]
modulo the ideal generated by the following relations\footnote{Here and everywhere all internal edges and legs in the graphical representations of elements of all the props considered are implicitly (but sometimes explicitly) oriented
by the flow which runs from the bottom of a picture to the top.}
\Beq\label{3: LieB relations}
R:\left\{
\Ba{l}
\Ba{c}\begin{xy}
 <0mm,0mm>*{\bu};<0mm,0mm>*{}**@{},
 <0mm,-0.49mm>*{};<0mm,-3.0mm>*{}**@{-},
 <0.49mm,0.49mm>*{};<1.9mm,1.9mm>*{}**@{-},
 <-0.5mm,0.5mm>*{};<-1.9mm,1.9mm>*{}**@{-},
 <-2.3mm,2.3mm>*{\bu};<-2.3mm,2.3mm>*{}**@{},
 <-1.8mm,2.8mm>*{};<0mm,4.9mm>*{}**@{-},
 <-2.8mm,2.9mm>*{};<-4.6mm,4.9mm>*{}**@{-},
   <0.49mm,0.49mm>*{};<2.7mm,2.3mm>*{^3}**@{},
   <-1.8mm,2.8mm>*{};<0.4mm,5.3mm>*{^2}**@{},
   <-2.8mm,2.9mm>*{};<-5.1mm,5.3mm>*{^1}**@{},
 \end{xy}
\ + \
\begin{xy}
 <0mm,0mm>*{\bu};<0mm,0mm>*{}**@{},
 <0mm,-0.49mm>*{};<0mm,-3.0mm>*{}**@{-},
 <0.49mm,0.49mm>*{};<1.9mm,1.9mm>*{}**@{-},
 <-0.5mm,0.5mm>*{};<-1.9mm,1.9mm>*{}**@{-},
 <-2.3mm,2.3mm>*{\bu};<-2.3mm,2.3mm>*{}**@{},
 <-1.8mm,2.8mm>*{};<0mm,4.9mm>*{}**@{-},
 <-2.8mm,2.9mm>*{};<-4.6mm,4.9mm>*{}**@{-},
   <0.49mm,0.49mm>*{};<2.7mm,2.3mm>*{^2}**@{},
   <-1.8mm,2.8mm>*{};<0.4mm,5.3mm>*{^1}**@{},
   <-2.8mm,2.9mm>*{};<-5.1mm,5.3mm>*{^3}**@{},
 \end{xy}
\ + \
\begin{xy}
 <0mm,0mm>*{\bu};<0mm,0mm>*{}**@{},
 <0mm,-0.49mm>*{};<0mm,-3.0mm>*{}**@{-},
 <0.49mm,0.49mm>*{};<1.9mm,1.9mm>*{}**@{-},
 <-0.5mm,0.5mm>*{};<-1.9mm,1.9mm>*{}**@{-},
 <-2.3mm,2.3mm>*{\bu};<-2.3mm,2.3mm>*{}**@{},
 <-1.8mm,2.8mm>*{};<0mm,4.9mm>*{}**@{-},
 <-2.8mm,2.9mm>*{};<-4.6mm,4.9mm>*{}**@{-},
   <0.49mm,0.49mm>*{};<2.7mm,2.3mm>*{^1}**@{},
   <-1.8mm,2.8mm>*{};<0.4mm,5.3mm>*{^3}**@{},
   <-2.8mm,2.9mm>*{};<-5.1mm,5.3mm>*{^2}**@{},
 \end{xy}\Ea =0
 \ \ \ \ , \ \ \
 \Ba{c}\begin{xy}
 <0mm,0mm>*{\bu};<0mm,0mm>*{}**@{},
 <0mm,0.69mm>*{};<0mm,3.0mm>*{}**@{-},
 <0.39mm,-0.39mm>*{};<2.4mm,-2.4mm>*{}**@{-},
 <-0.35mm,-0.35mm>*{};<-1.9mm,-1.9mm>*{}**@{-},
 <-2.4mm,-2.4mm>*{\bu};<-2.4mm,-2.4mm>*{}**@{},
 <-2.0mm,-2.8mm>*{};<0mm,-4.9mm>*{}**@{-},
 <-2.8mm,-2.9mm>*{};<-4.7mm,-4.9mm>*{}**@{-},
    <0.39mm,-0.39mm>*{};<3.3mm,-4.0mm>*{^3}**@{},
    <-2.0mm,-2.8mm>*{};<0.5mm,-6.7mm>*{^2}**@{},
    <-2.8mm,-2.9mm>*{};<-5.2mm,-6.7mm>*{^1}**@{},
 \end{xy}
\ + \
 \begin{xy}
 <0mm,0mm>*{\bu};<0mm,0mm>*{}**@{},
 <0mm,0.69mm>*{};<0mm,3.0mm>*{}**@{-},
 <0.39mm,-0.39mm>*{};<2.4mm,-2.4mm>*{}**@{-},
 <-0.35mm,-0.35mm>*{};<-1.9mm,-1.9mm>*{}**@{-},
 <-2.4mm,-2.4mm>*{\bu};<-2.4mm,-2.4mm>*{}**@{},
 <-2.0mm,-2.8mm>*{};<0mm,-4.9mm>*{}**@{-},
 <-2.8mm,-2.9mm>*{};<-4.7mm,-4.9mm>*{}**@{-},
    <0.39mm,-0.39mm>*{};<3.3mm,-4.0mm>*{^2}**@{},
    <-2.0mm,-2.8mm>*{};<0.5mm,-6.7mm>*{^1}**@{},
    <-2.8mm,-2.9mm>*{};<-5.2mm,-6.7mm>*{^3}**@{},
 \end{xy}
\ + \
 \begin{xy}
 <0mm,0mm>*{\bu};<0mm,0mm>*{}**@{},
 <0mm,0.69mm>*{};<0mm,3.0mm>*{}**@{-},
 <0.39mm,-0.39mm>*{};<2.4mm,-2.4mm>*{}**@{-},
 <-0.35mm,-0.35mm>*{};<-1.9mm,-1.9mm>*{}**@{-},
 <-2.4mm,-2.4mm>*{\bu};<-2.4mm,-2.4mm>*{}**@{},
 <-2.0mm,-2.8mm>*{};<0mm,-4.9mm>*{}**@{-},
 <-2.8mm,-2.9mm>*{};<-4.7mm,-4.9mm>*{}**@{-},
    <0.39mm,-0.39mm>*{};<3.3mm,-4.0mm>*{^1}**@{},
    <-2.0mm,-2.8mm>*{};<0.5mm,-6.7mm>*{^3}**@{},
    <-2.8mm,-2.9mm>*{};<-5.2mm,-6.7mm>*{^2}**@{},
 \end{xy}\Ea =0
 \\
 \begin{xy}
 <0mm,2.47mm>*{};<0mm,0.12mm>*{}**@{-},
 <0.5mm,3.5mm>*{};<2.2mm,5.2mm>*{}**@{-},
 <-0.48mm,3.48mm>*{};<-2.2mm,5.2mm>*{}**@{-},
 <0mm,3mm>*{\bu};<0mm,3mm>*{}**@{},
  <0mm,-0.8mm>*{\bu};<0mm,-0.8mm>*{}**@{},
<-0.39mm,-1.2mm>*{};<-2.2mm,-3.5mm>*{}**@{-},
 <0.39mm,-1.2mm>*{};<2.2mm,-3.5mm>*{}**@{-},
     <0.5mm,3.5mm>*{};<2.8mm,5.7mm>*{^2}**@{},
     <-0.48mm,3.48mm>*{};<-2.8mm,5.7mm>*{^1}**@{},
   <0mm,-0.8mm>*{};<-2.7mm,-5.2mm>*{^1}**@{},
   <0mm,-0.8mm>*{};<2.7mm,-5.2mm>*{^2}**@{},
\end{xy}
\  - \
\begin{xy}
 <0mm,-1.3mm>*{};<0mm,-3.5mm>*{}**@{-},
 <0.38mm,-0.2mm>*{};<2.0mm,2.0mm>*{}**@{-},
 <-0.38mm,-0.2mm>*{};<-2.2mm,2.2mm>*{}**@{-},
<0mm,-0.8mm>*{\bu};<0mm,0.8mm>*{}**@{},
 <2.4mm,2.4mm>*{\bu};<2.4mm,2.4mm>*{}**@{},
 <2.77mm,2.0mm>*{};<4.4mm,-0.8mm>*{}**@{-},
 <2.4mm,3mm>*{};<2.4mm,5.2mm>*{}**@{-},
     <0mm,-1.3mm>*{};<0mm,-5.3mm>*{^1}**@{},
     <2.5mm,2.3mm>*{};<5.1mm,-2.6mm>*{^2}**@{},
    <2.4mm,2.5mm>*{};<2.4mm,5.7mm>*{^2}**@{},
    <-0.38mm,-0.2mm>*{};<-2.8mm,2.5mm>*{^1}**@{},
    \end{xy}
\  + \
\begin{xy}
 <0mm,-1.3mm>*{};<0mm,-3.5mm>*{}**@{-},
 <0.38mm,-0.2mm>*{};<2.0mm,2.0mm>*{}**@{-},
 <-0.38mm,-0.2mm>*{};<-2.2mm,2.2mm>*{}**@{-},
<0mm,-0.8mm>*{\bu};<0mm,0.8mm>*{}**@{},
 <2.4mm,2.4mm>*{\bu};<2.4mm,2.4mm>*{}**@{},
 <2.77mm,2.0mm>*{};<4.4mm,-0.8mm>*{}**@{-},
 <2.4mm,3mm>*{};<2.4mm,5.2mm>*{}**@{-},
     <0mm,-1.3mm>*{};<0mm,-5.3mm>*{^2}**@{},
     <2.5mm,2.3mm>*{};<5.1mm,-2.6mm>*{^1}**@{},
    <2.4mm,2.5mm>*{};<2.4mm,5.7mm>*{^2}**@{},
    <-0.38mm,-0.2mm>*{};<-2.8mm,2.5mm>*{^1}**@{},
    \end{xy}
\  - \
\begin{xy}
 <0mm,-1.3mm>*{};<0mm,-3.5mm>*{}**@{-},
 <0.38mm,-0.2mm>*{};<2.0mm,2.0mm>*{}**@{-},
 <-0.38mm,-0.2mm>*{};<-2.2mm,2.2mm>*{}**@{-},
<0mm,-0.8mm>*{\bu};<0mm,0.8mm>*{}**@{},
 <2.4mm,2.4mm>*{\bu};<2.4mm,2.4mm>*{}**@{},
 <2.77mm,2.0mm>*{};<4.4mm,-0.8mm>*{}**@{-},
 <2.4mm,3mm>*{};<2.4mm,5.2mm>*{}**@{-},
     <0mm,-1.3mm>*{};<0mm,-5.3mm>*{^2}**@{},
     <2.5mm,2.3mm>*{};<5.1mm,-2.6mm>*{^1}**@{},
    <2.4mm,2.5mm>*{};<2.4mm,5.7mm>*{^1}**@{},
    <-0.38mm,-0.2mm>*{};<-2.8mm,2.5mm>*{^2}**@{},
    \end{xy}
\ + \
\begin{xy}
 <0mm,-1.3mm>*{};<0mm,-3.5mm>*{}**@{-},
 <0.38mm,-0.2mm>*{};<2.0mm,2.0mm>*{}**@{-},
 <-0.38mm,-0.2mm>*{};<-2.2mm,2.2mm>*{}**@{-},
<0mm,-0.8mm>*{\bu};<0mm,0.8mm>*{}**@{},
 <2.4mm,2.4mm>*{\bu};<2.4mm,2.4mm>*{}**@{},
 <2.77mm,2.0mm>*{};<4.4mm,-0.8mm>*{}**@{-},
 <2.4mm,3mm>*{};<2.4mm,5.2mm>*{}**@{-},
     <0mm,-1.3mm>*{};<0mm,-5.3mm>*{^1}**@{},
     <2.5mm,2.3mm>*{};<5.1mm,-2.6mm>*{^2}**@{},
    <2.4mm,2.5mm>*{};<2.4mm,5.7mm>*{^1}**@{},
    <-0.38mm,-0.2mm>*{};<-2.8mm,2.5mm>*{^2}**@{},
    \end{xy}=0
\Ea
\right.
\Eeq
Its minimal resolution,
 $\LB_\infty$,  is a dg free prop,
$$
\LB_\infty=\cF ree \langle E\rangle,
$$
generated by the $\bS$--bimodule $ E=\{ E(m,n)\}_{m,n\geq 1, m+n\geq 3}$,
$$
 E(m,n):= sgn_m\ot sgn_n[m+n-3]=\mbox{span}\left\langle
\Ba{c}\resizebox{14mm}{!}{\begin{xy}
 <0mm,0mm>*{\bu};<0mm,0mm>*{}**@{},
 <-0.6mm,0.44mm>*{};<-8mm,5mm>*{}**@{-},
 <-0.4mm,0.7mm>*{};<-4.5mm,5mm>*{}**@{-},
 <0mm,0mm>*{};<-1mm,5mm>*{\ldots}**@{},
 <0.4mm,0.7mm>*{};<4.5mm,5mm>*{}**@{-},
 <0.6mm,0.44mm>*{};<8mm,5mm>*{}**@{-},
   <0mm,0mm>*{};<-8.5mm,5.5mm>*{^1}**@{},
   <0mm,0mm>*{};<-5mm,5.5mm>*{^2}**@{},
   <0mm,0mm>*{};<4.5mm,5.5mm>*{^{m\hspace{-0.5mm}-\hspace{-0.5mm}1}}**@{},
   <0mm,0mm>*{};<9.0mm,5.5mm>*{^m}**@{},
 <-0.6mm,-0.44mm>*{};<-8mm,-5mm>*{}**@{-},
 <-0.4mm,-0.7mm>*{};<-4.5mm,-5mm>*{}**@{-},
 <0mm,0mm>*{};<-1mm,-5mm>*{\ldots}**@{},
 <0.4mm,-0.7mm>*{};<4.5mm,-5mm>*{}**@{-},
 <0.6mm,-0.44mm>*{};<8mm,-5mm>*{}**@{-},
   <0mm,0mm>*{};<-8.5mm,-6.9mm>*{^1}**@{},
   <0mm,0mm>*{};<-5mm,-6.9mm>*{^2}**@{},
   <0mm,0mm>*{};<4.5mm,-6.9mm>*{^{n\hspace{-0.5mm}-\hspace{-0.5mm}1}}**@{},
   <0mm,0mm>*{};<9.0mm,-6.9mm>*{^n}**@{},
 \end{xy}}\Ea
\right\rangle,
$$
and  with the differential given on generating corollas by \cite{MaVo,Va}
\Beq\label{3: differential in LieBinfty}
\delta
\Ba{c}\resizebox{14mm}{!}{\begin{xy}
 <0mm,0mm>*{\bu};<0mm,0mm>*{}**@{},
 <-0.6mm,0.44mm>*{};<-8mm,5mm>*{}**@{-},
 <-0.4mm,0.7mm>*{};<-4.5mm,5mm>*{}**@{-},
 <0mm,0mm>*{};<-1mm,5mm>*{\ldots}**@{},
 <0.4mm,0.7mm>*{};<4.5mm,5mm>*{}**@{-},
 <0.6mm,0.44mm>*{};<8mm,5mm>*{}**@{-},
   <0mm,0mm>*{};<-8.5mm,5.5mm>*{^1}**@{},
   <0mm,0mm>*{};<-5mm,5.5mm>*{^2}**@{},
   <0mm,0mm>*{};<4.5mm,5.5mm>*{^{m\hspace{-0.5mm}-\hspace{-0.5mm}1}}**@{},
   <0mm,0mm>*{};<9.0mm,5.5mm>*{^m}**@{},
 <-0.6mm,-0.44mm>*{};<-8mm,-5mm>*{}**@{-},
 <-0.4mm,-0.7mm>*{};<-4.5mm,-5mm>*{}**@{-},
 <0mm,0mm>*{};<-1mm,-5mm>*{\ldots}**@{},
 <0.4mm,-0.7mm>*{};<4.5mm,-5mm>*{}**@{-},
 <0.6mm,-0.44mm>*{};<8mm,-5mm>*{}**@{-},
   <0mm,0mm>*{};<-8.5mm,-6.9mm>*{^1}**@{},
   <0mm,0mm>*{};<-5mm,-6.9mm>*{^2}**@{},
   <0mm,0mm>*{};<4.5mm,-6.9mm>*{^{n\hspace{-0.5mm}-\hspace{-0.5mm}1}}**@{},
   <0mm,0mm>*{};<9.0mm,-6.9mm>*{^n}**@{},
 \end{xy}}\Ea
\ \ = \ \
 \sum_{[1,\ldots,m]=I_1\sqcup I_2\atop
 {|I_1|\geq 0, |I_2|\geq 1}}
 \sum_{[1,\ldots,n]=J_1\sqcup J_2\atop
 {|J_1|\geq 1, |J_2|\geq 1}
}\hspace{0mm}
(-1)^{\sigma(I_1\sqcup I_2)+ |I_1||I_2|+|J_1||J_2|}
\Ba{c}\resizebox{20mm}{!}{ \begin{xy}
 <0mm,0mm>*{\bu};<0mm,0mm>*{}**@{},
 <-0.6mm,0.44mm>*{};<-8mm,5mm>*{}**@{-},
 <-0.4mm,0.7mm>*{};<-4.5mm,5mm>*{}**@{-},
 <0mm,0mm>*{};<0mm,5mm>*{\ldots}**@{},
 <0.4mm,0.7mm>*{};<4.5mm,5mm>*{}**@{-},
 <0.6mm,0.44mm>*{};<12.4mm,4.8mm>*{}**@{-},
     <0mm,0mm>*{};<-2mm,7mm>*{\overbrace{\ \ \ \ \ \ \ \ \ \ \ \ }}**@{},
     <0mm,0mm>*{};<-2mm,9mm>*{^{I_1}}**@{},
 <-0.6mm,-0.44mm>*{};<-8mm,-5mm>*{}**@{-},
 <-0.4mm,-0.7mm>*{};<-4.5mm,-5mm>*{}**@{-},
 <0mm,0mm>*{};<-1mm,-5mm>*{\ldots}**@{},
 <0.4mm,-0.7mm>*{};<4.5mm,-5mm>*{}**@{-},
 <0.6mm,-0.44mm>*{};<8mm,-5mm>*{}**@{-},
      <0mm,0mm>*{};<0mm,-7mm>*{\underbrace{\ \ \ \ \ \ \ \ \ \ \ \ \ \ \
      }}**@{},
      <0mm,0mm>*{};<0mm,-10.6mm>*{_{J_1}}**@{},
 <13mm,5mm>*{};<13mm,5mm>*{\bu}**@{},
 <12.6mm,5.44mm>*{};<5mm,10mm>*{}**@{-},
 <12.6mm,5.7mm>*{};<8.5mm,10mm>*{}**@{-},
 <13mm,5mm>*{};<13mm,10mm>*{\ldots}**@{},
 <13.4mm,5.7mm>*{};<16.5mm,10mm>*{}**@{-},
 <13.6mm,5.44mm>*{};<20mm,10mm>*{}**@{-},
      <13mm,5mm>*{};<13mm,12mm>*{\overbrace{\ \ \ \ \ \ \ \ \ \ \ \ \ \ }}**@{},
      <13mm,5mm>*{};<13mm,14mm>*{^{I_2}}**@{},
 <12.4mm,4.3mm>*{};<8mm,0mm>*{}**@{-},
 <12.6mm,4.3mm>*{};<12mm,0mm>*{\ldots}**@{},
 <13.4mm,4.5mm>*{};<16.5mm,0mm>*{}**@{-},
 <13.6mm,4.8mm>*{};<20mm,0mm>*{}**@{-},
     <13mm,5mm>*{};<14.3mm,-2mm>*{\underbrace{\ \ \ \ \ \ \ \ \ \ \ }}**@{},
     <13mm,5mm>*{};<14.3mm,-4.5mm>*{_{J_2}}**@{},
 \end{xy}}\Ea
\Eeq
where $\sigma(I_1\sqcup I_2)$ and $\sigma(J_1\sqcup J_2)$ are the signs of the shuffles
$[1,\ldots,m]\rar I_1\sqcup I_2$ and, respectively, $[1,\ldots,n]\rar J_1\sqcup J_2$. For example,
\Beq\label{3: delta in LieBi_infty on (2,2) corolla}
\delta
\begin{xy}
 <0.5mm,0.5mm>*{};<2.2mm,2.2mm>*{}**@{-},
 <-0.48mm,0.48mm>*{};<-2.2mm,2.2mm>*{}**@{-},
 <0mm,0mm>*{\bu};<0mm,0mm>*{}**@{},
 <0.5mm,0.5mm>*{};<2.7mm,2.8mm>*{^2}**@{},
 <-0.48mm,0.48mm>*{};<-2.7mm,2.8mm>*{^1}**@{},
 <0.39mm,-0.39mm>*{};<2.2mm,-2.2mm>*{}**@{-},
 <-0.35mm,-0.35mm>*{};<-2.2mm,-2.2mm>*{}**@{-},
 <0mm,0mm>*{};<0mm,0mm>*{}**@{},
   <0.39mm,-0.39mm>*{};<2.9mm,-4mm>*{^2}**@{},
   <-0.35mm,-0.35mm>*{};<-2.8mm,-4mm>*{^1}**@{},
\end{xy}
=
 \begin{xy}
 <0mm,2.47mm>*{};<0mm,0.12mm>*{}**@{-},
 <0.5mm,3.5mm>*{};<2.2mm,5.2mm>*{}**@{-},
 <-0.48mm,3.48mm>*{};<-2.2mm,5.2mm>*{}**@{-},
 <0mm,3mm>*{\bu};<0mm,3mm>*{}**@{},
  <0mm,-0.8mm>*{\bu};<0mm,-0.8mm>*{}**@{},
<-0.39mm,-1.2mm>*{};<-2.2mm,-3.5mm>*{}**@{-},
 <0.39mm,-1.2mm>*{};<2.2mm,-3.5mm>*{}**@{-},
     <0.5mm,3.5mm>*{};<2.8mm,5.7mm>*{^2}**@{},
     <-0.48mm,3.48mm>*{};<-2.8mm,5.7mm>*{^1}**@{},
   <0mm,-0.8mm>*{};<-2.7mm,-5.2mm>*{^1}**@{},
   <0mm,-0.8mm>*{};<2.7mm,-5.2mm>*{^2}**@{},
\end{xy}
\  - \
\begin{xy}
 <0mm,-1.3mm>*{};<0mm,-3.5mm>*{}**@{-},
 <0.38mm,-0.2mm>*{};<2.0mm,2.0mm>*{}**@{-},
 <-0.38mm,-0.2mm>*{};<-2.2mm,2.2mm>*{}**@{-},
<0mm,-0.8mm>*{\bu};<0mm,0.8mm>*{}**@{},
 <2.4mm,2.4mm>*{\bu};<2.4mm,2.4mm>*{}**@{},
 <2.77mm,2.0mm>*{};<4.4mm,-0.8mm>*{}**@{-},
 <2.4mm,3mm>*{};<2.4mm,5.2mm>*{}**@{-},
     <0mm,-1.3mm>*{};<0mm,-5.3mm>*{^1}**@{},
     <2.5mm,2.3mm>*{};<5.1mm,-2.6mm>*{^2}**@{},
    <2.4mm,2.5mm>*{};<2.4mm,5.7mm>*{^2}**@{},
    <-0.38mm,-0.2mm>*{};<-2.8mm,2.5mm>*{^1}**@{},
    \end{xy}
\  + \
\begin{xy}
 <0mm,-1.3mm>*{};<0mm,-3.5mm>*{}**@{-},
 <0.38mm,-0.2mm>*{};<2.0mm,2.0mm>*{}**@{-},
 <-0.38mm,-0.2mm>*{};<-2.2mm,2.2mm>*{}**@{-},
<0mm,-0.8mm>*{\bu};<0mm,0.8mm>*{}**@{},
 <2.4mm,2.4mm>*{\bu};<2.4mm,2.4mm>*{}**@{},
 <2.77mm,2.0mm>*{};<4.4mm,-0.8mm>*{}**@{-},
 <2.4mm,3mm>*{};<2.4mm,5.2mm>*{}**@{-},
     <0mm,-1.3mm>*{};<0mm,-5.3mm>*{^2}**@{},
     <2.5mm,2.3mm>*{};<5.1mm,-2.6mm>*{^1}**@{},
    <2.4mm,2.5mm>*{};<2.4mm,5.7mm>*{^2}**@{},
    <-0.38mm,-0.2mm>*{};<-2.8mm,2.5mm>*{^1}**@{},
    \end{xy}
\  - \
\begin{xy}
 <0mm,-1.3mm>*{};<0mm,-3.5mm>*{}**@{-},
 <0.38mm,-0.2mm>*{};<2.0mm,2.0mm>*{}**@{-},
 <-0.38mm,-0.2mm>*{};<-2.2mm,2.2mm>*{}**@{-},
<0mm,-0.8mm>*{\bu};<0mm,0.8mm>*{}**@{},
 <2.4mm,2.4mm>*{\bu};<2.4mm,2.4mm>*{}**@{},
 <2.77mm,2.0mm>*{};<4.4mm,-0.8mm>*{}**@{-},
 <2.4mm,3mm>*{};<2.4mm,5.2mm>*{}**@{-},
     <0mm,-1.3mm>*{};<0mm,-5.3mm>*{^2}**@{},
     <2.5mm,2.3mm>*{};<5.1mm,-2.6mm>*{^1}**@{},
    <2.4mm,2.5mm>*{};<2.4mm,5.7mm>*{^1}**@{},
    <-0.38mm,-0.2mm>*{};<-2.8mm,2.5mm>*{^2}**@{},
    \end{xy}
\ + \
\begin{xy}
 <0mm,-1.3mm>*{};<0mm,-3.5mm>*{}**@{-},
 <0.38mm,-0.2mm>*{};<2.0mm,2.0mm>*{}**@{-},
 <-0.38mm,-0.2mm>*{};<-2.2mm,2.2mm>*{}**@{-},
<0mm,-0.8mm>*{\bu};<0mm,0.8mm>*{}**@{},
 <2.4mm,2.4mm>*{\bu};<2.4mm,2.4mm>*{}**@{},
 <2.77mm,2.0mm>*{};<4.4mm,-0.8mm>*{}**@{-},
 <2.4mm,3mm>*{};<2.4mm,5.2mm>*{}**@{-},
     <0mm,-1.3mm>*{};<0mm,-5.3mm>*{^1}**@{},
     <2.5mm,2.3mm>*{};<5.1mm,-2.6mm>*{^2}**@{},
    <2.4mm,2.5mm>*{};<2.4mm,5.7mm>*{^1}**@{},
    <-0.38mm,-0.2mm>*{};<-2.8mm,2.5mm>*{^2}**@{},
    \end{xy}.
\Eeq
Strongly homotopy Lie bialgebra structures,
$
\rho: \LB_\infty \lon \cE nd_V,
$
 in a dg vector space $V$ can be  identified with Maurer-Cartan (MC, for short) elements of a dg Lie algebra $\fg_V$
 defined next.

\subsubsection{\bf Strongly homotopy Lie bialgebra structures as Maurer-Cartan elements}\label{2: subsect on LieBi as MC} Let
$V$ be a dg vector space (with differential denoted by $d$). According to the general theory \cite{MV}, there is a one-to-one correspondence between the set of representations, $\{
\rho: \LB_\infty \rar \cE nd_V\}$, and the set of Maurer-Cartan elements in the dg Lie algebra
\Beq\label{2: fl_V'}
\Def(\LB_\infty \stackrel{0}{\rar} \cE nd_V)\simeq \prod_{m,n\geq 1}
\wedge^mV^*\ot \wedge^n V[2-m-n]= \prod_{m,n\geq 1} \odot^m(V^*[-1])\ot \odot^n(V[-1]) [2] =: \fg_V[2]
\Eeq
controlling deformations of the zero map $\LB_\infty \stackrel{0}{\rar} \cE nd_V$.
The differential in $\fg_V$ is induced by the differential in $V$ while the Lie bracket
can be described explicitly as follows. First one notices that the completed graded vector space
$$
\fg_V= \prod_{m,n\geq 1} \odot^m(V^*[-1])\ot \odot^n(V[-1])=\widehat{\odot^{\bu \geq 1}}\left( V^*[-1])\oplus V[-1]\right)
$$
is naturally a 3-algebra with  degree $-2$ Lie brackets, $\{\ ,\ \}$,
given on generators by
\[
\{sv, sw\}=0,\ \ \{s\al, s\be\}=0, \ \ \{s\al, sv\}=<\al,v>, \ \ \forall v,w\in V, \al,\be\in V^*.
\]
where $s: V\rar V[-1]$ and $s: V^*\rar V^*[-1]$ are natural isomorphisms.
Maurer-Cartan elements in $\fg_V$, that is degree 3 elements $\nu$ satisfying the equation
$$
\{\nu,\nu\}=0,
$$
 are in 1-1 correspondence with
representations $\nu: \LB_\infty \rar \cE nd_V$. Such elements satisfying the condition
$$
\nu \in \odot^2(V^*[-1)\ot V[-1] \ \oplus V^*[-1]\ot \odot^2(V[-1])
$$
are precisely Lie bialgebra structures in $V$.

\sip

Sometimes it is useful to use a coordinate representation of the above structure. Let $\{x^i\}_{i\in I}$ be a basis in $V$ and $\{x_i\}_{i\in I}$ the  dual basis in $V^*$. Set
$\{\psi:=sx_i \}_{i\in I}$ and $\{\psi^i:=sx^i\}$ for the associated
bases of $V[-1]$ and $V^*[-1]$ respectively. Note that $|\psi_i|+|\psi^i|=2$ and that
$
{\fg}_V\simeq \K[[\psi_i,\psi^i]].
$
Then the degree $-2$ Lie brackets in $\fg_V$ are given explicitly by
\[
\{f, g\}=\sum_{i\in I} (-1)^{|f||\psi^i|} \frac{\p f}{\p  \psi_i} \frac{\p g}{\p \psi^i} -
(-1)^{|f||g|} \frac{\p g}{\p \psi_i} \frac{\p f}{\p \psi^i}.
\]
It is an easy exercise to check that elements of the form
$$
\nu:=\sum_{i,j,k\in I} C_{ij}^k \psi_k\psi^i\psi^j + \Phi_k^{ij}\psi^k \psi_i\psi_j.
$$
satisfy $\{\nu,\nu\}=0$ if and only if the maps
$$
\left(\bigtriangleup:V\rar \wedge^2 V, [\ ,\ ]:\wedge^2V \rar V\right)
$$
with the structure constants
$C_{ij}^k$ and $\Phi_k^{ij}$ respectively,
$$
[x_i,x_j]=:\sum_{k\in I} C_{ij}^k x_k,\ \ \ \ \bigtriangleup(x_k)=:\sum_{i,j\in I} \Phi_k^{ij} x_i\wedge x_j.
$$
define a Lie bialgebra structure in $V$.
(Here we assumed for simplicity that $V$ is concentrated in homological degree zero to avoid standard Koszul signs in the formulae.)

\subsection{Associative bialgebras.} An {\em associative  bialgebra}\,
 is, by definition, a graded vector space $V$ equipped with two degree zero linear
maps,
$$
\Ba{rccc}
\mu: &  V\ot V& \lon & V \\
       & a\ot b    & \lon & ab
\Ea
\ \ \ \ \  , \ \ \ \
\Ba{rccc}
\Delta: & V& \lon & V\ot V \\
       & a    & \lon &  \sum a_{1}\ot a_{2}
\Ea
$$
satisfying,
\Bi
\item[(i)] the associativity identity:
$(ab)c=a(bc)$;
\item[(ii)] the coassociativity identity: $(\Delta\ot\Id)\Delta a=
(\Id\ot \Delta)\Delta a$;
\item[(iii)] the compatibility identity: $\Delta$ is a morphism of algebras,
i.e.\ $\Delta(ab)=\sum (-1)^{a_2b_1} a_1b_1\ot a_2b_2$,
\Ei
for any $a,b, c\in V$. We often abbreviate ``associative bialgebra" to simply ``bialgebra".

A prop of bialgebras, $\Assb$, is generated by two kinds of operations,
the multiplication $\begin{xy}
 <0mm,0.66mm>*{};<0mm,3mm>*{}**@{-},
 <0.39mm,-0.39mm>*{};<2.2mm,-2.2mm>*{}**@{-},
 <-0.35mm,-0.35mm>*{};<-2.2mm,-2.2mm>*{}**@{-},
 <0mm,0mm>*{\circ};<0mm,0mm>*{}**@{},
\end{xy}$ and the comultiplication $\begin{xy}
 <0mm,-0.55mm>*{};<0mm,-2.5mm>*{}**@{-},
 <0.5mm,0.5mm>*{};<2.2mm,2.2mm>*{}**@{-},
 <-0.48mm,0.48mm>*{};<-2.2mm,2.2mm>*{}**@{-},
 <0mm,0mm>*{\circ};<0mm,0mm>*{}**@{},
 \end{xy}$, subject to relations which assure that its representations,
$$
\rho: \Assb \lon \cE nd_A,
$$
 in one-to-one correspondence with  associative bialgebra  structures on a graded vector space $A$. More precisely, $\cA ss\cB$ is the quotient,
$$
\Assb:= {\cF ree\langle A_0 \rangle}/(R)
$$
of the free prop, $\cF ree\langle A_0 \rangle$, generated\footnote{Later we shall work with 2-coloured props so we reserve from now on the ``dashed colour"  to $\Assb_\infty$ operations.} by an $\bS$-bimodule $A_0=\{A_0(m,n)\}$,
\[
A_0(m,n):=\left\{
\Ba{rr}
\K[\bS_2]\ot \id_1\equiv\mbox{span}\left\langle
\begin{xy}
 <0mm,-0.55mm>*{};<0mm,-2.5mm>*{}**@{.},
 <0.5mm,0.5mm>*{};<2.2mm,2.2mm>*{}**@{.},
 <-0.48mm,0.48mm>*{};<-2.2mm,2.2mm>*{}**@{.},
 <0mm,0mm>*{\circ};<0mm,0mm>*{}**@{},
 <0mm,-0.55mm>*{};<0mm,-3.8mm>*{_1}**@{},
 <0.5mm,0.5mm>*{};<2.7mm,2.8mm>*{^2}**@{},
 <-0.48mm,0.48mm>*{};<-2.7mm,2.8mm>*{^1}**@{},
 \end{xy}
\,
,\,
\begin{xy}
 <0mm,-0.55mm>*{};<0mm,-2.5mm>*{}**@{.},
 <0.5mm,0.5mm>*{};<2.2mm,2.2mm>*{}**@{.},
 <-0.48mm,0.48mm>*{};<-2.2mm,2.2mm>*{}**@{.},
 <0mm,0mm>*{\circ};<0mm,0mm>*{}**@{},
 <0mm,-0.55mm>*{};<0mm,-3.8mm>*{_1}**@{},
 <0.5mm,0.5mm>*{};<2.7mm,2.8mm>*{^1}**@{},
 <-0.48mm,0.48mm>*{};<-2.7mm,2.8mm>*{^2}**@{},
 \end{xy}
   \right\rangle  & \mbox{if}\ m=2, n=1,\vspace{3mm}\\
\id_1\ot \K[\bS_2]\equiv
\mbox{span}\left\langle
\begin{xy}
 <0mm,0.66mm>*{};<0mm,3mm>*{}**@{.},
 <0.39mm,-0.39mm>*{};<2.2mm,-2.2mm>*{}**@{.},
 <-0.35mm,-0.35mm>*{};<-2.2mm,-2.2mm>*{}**@{.},
 <0mm,0mm>*{\circ};<0mm,0mm>*{}**@{},
   <0mm,0.66mm>*{};<0mm,3.4mm>*{^1}**@{},
   <0.39mm,-0.39mm>*{};<2.9mm,-4mm>*{^2}**@{},
   <-0.35mm,-0.35mm>*{};<-2.8mm,-4mm>*{^1}**@{},
\end{xy}
\,
,\,
\begin{xy}
 <0mm,0.66mm>*{};<0mm,3mm>*{}**@{.},
 <0.39mm,-0.39mm>*{};<2.2mm,-2.2mm>*{}**@{.},
 <-0.35mm,-0.35mm>*{};<-2.2mm,-2.2mm>*{}**@{.},
 <0mm,0mm>*{\circ};<0mm,0mm>*{}**@{},
   <0mm,0.66mm>*{};<0mm,3.4mm>*{^1}**@{},
   <0.39mm,-0.39mm>*{};<2.9mm,-4mm>*{^1}**@{},
   <-0.35mm,-0.35mm>*{};<-2.8mm,-4mm>*{^2}**@{},
\end{xy}
\right\rangle
\ & \mbox{if}\ m=1, n=2, \vspace{3mm}\\
0 & \mbox{otherwise}
\Ea
\right.
\]
modulo the ideal generated by relations
\Beq\label{2: bialgebra relations}
R:\left\{
\Ba{c}
\begin{xy}
 <0mm,0mm>*{\circ};<0mm,0mm>*{}**@{},
 <0mm,-0.49mm>*{};<0mm,-3.0mm>*{}**@{.},
 <0.49mm,0.49mm>*{};<1.9mm,1.9mm>*{}**@{.},
 <-0.5mm,0.5mm>*{};<-1.9mm,1.9mm>*{}**@{.},
 <-2.3mm,2.3mm>*{\circ};<-2.3mm,2.3mm>*{}**@{},
 <-1.8mm,2.8mm>*{};<0mm,4.9mm>*{}**@{.},
 <-2.8mm,2.9mm>*{};<-4.6mm,4.9mm>*{}**@{.},
   <0.49mm,0.49mm>*{};<2.7mm,2.3mm>*{^3}**@{},
   <-1.8mm,2.8mm>*{};<0.4mm,5.3mm>*{^2}**@{},
   <-2.8mm,2.9mm>*{};<-5.1mm,5.3mm>*{^1}**@{},
 \end{xy}\Ea
\ - \
\Ba{c}
\begin{xy}
 <0mm,0mm>*{\circ};<0mm,0mm>*{}**@{},
 <0mm,-0.49mm>*{};<0mm,-3.0mm>*{}**@{.},
 <0.49mm,0.49mm>*{};<1.9mm,1.9mm>*{}**@{.},
 <-0.5mm,0.5mm>*{};<-1.9mm,1.9mm>*{}**@{.},
 <2.3mm,2.3mm>*{\circ};<-2.3mm,2.3mm>*{}**@{},
 <1.8mm,2.8mm>*{};<0mm,4.9mm>*{}**@{.},
 <2.8mm,2.9mm>*{};<4.6mm,4.9mm>*{}**@{.},
   <0.49mm,0.49mm>*{};<-2.7mm,2.3mm>*{^1}**@{},
   <-1.8mm,2.8mm>*{};<0mm,5.3mm>*{^2}**@{},
   <-2.8mm,2.9mm>*{};<5.1mm,5.3mm>*{^3}**@{},
 \end{xy}\Ea=0, \ \ \ \ \
 \Ba{c}\begin{xy}
 <0mm,0mm>*{\circ};<0mm,0mm>*{}**@{},
 <0mm,0.69mm>*{};<0mm,3.0mm>*{}**@{.},
 <0.39mm,-0.39mm>*{};<2.4mm,-2.4mm>*{}**@{.},
 <-0.35mm,-0.35mm>*{};<-1.9mm,-1.9mm>*{}**@{.},
 <-2.4mm,-2.4mm>*{\circ};<-2.4mm,-2.4mm>*{}**@{},
 <-2.0mm,-2.8mm>*{};<0mm,-4.9mm>*{}**@{.},
 <-2.8mm,-2.9mm>*{};<-4.7mm,-4.9mm>*{}**@{.},
    <0.39mm,-0.39mm>*{};<3.3mm,-4.0mm>*{^3}**@{},
    <-2.0mm,-2.8mm>*{};<0.5mm,-6.7mm>*{^2}**@{},
    <-2.8mm,-2.9mm>*{};<-5.2mm,-6.7mm>*{^1}**@{},
 \end{xy}\Ea
\ - \
 \Ba{c}\begin{xy}
 <0mm,0mm>*{\circ};<0mm,0mm>*{}**@{},
 <0mm,0.69mm>*{};<0mm,3.0mm>*{}**@{.},
 <0.39mm,-0.39mm>*{};<2.4mm,-2.4mm>*{}**@{.},
 <-0.35mm,-0.35mm>*{};<-1.9mm,-1.9mm>*{}**@{.},
 <2.4mm,-2.4mm>*{\circ};<-2.4mm,-2.4mm>*{}**@{},
 <2.0mm,-2.8mm>*{};<0mm,-4.9mm>*{}**@{.},
 <2.8mm,-2.9mm>*{};<4.7mm,-4.9mm>*{}**@{.},
    <0.39mm,-0.39mm>*{};<-3mm,-4.0mm>*{^1}**@{},
    <-2.0mm,-2.8mm>*{};<0mm,-6.7mm>*{^2}**@{},
    <-2.8mm,-2.9mm>*{};<5.2mm,-6.7mm>*{^3}**@{},
 \end{xy}\Ea=0,\ \ \ \ \ \
\Ba{c} \begin{xy}
 <0mm,2.47mm>*{};<0mm,-0.5mm>*{}**@{.},
 <0.5mm,3.5mm>*{};<2.2mm,5.2mm>*{}**@{.},
 <-0.48mm,3.48mm>*{};<-2.2mm,5.2mm>*{}**@{.},
 <0mm,3mm>*{\circ};<0mm,3mm>*{}**@{},
  <0mm,-0.8mm>*{\circ};<0mm,-0.8mm>*{}**@{},
<0mm,-0.8mm>*{};<-2.2mm,-3.5mm>*{}**@{.},
 <0mm,-0.8mm>*{};<2.2mm,-3.5mm>*{}**@{.},
     <0.5mm,3.5mm>*{};<2.8mm,5.7mm>*{^2}**@{},
     <-0.48mm,3.48mm>*{};<-2.8mm,5.7mm>*{^1}**@{},
   <0mm,-0.8mm>*{};<-2.7mm,-5.2mm>*{^1}**@{},
   <0mm,-0.8mm>*{};<2.7mm,-5.2mm>*{^2}**@{},
\end{xy}\Ea
\ - \
\Ba{c}\begin{xy}
 <0mm,0mm>*{\circ};<0mm,0mm>*{}**@{},
 <0mm,-0.49mm>*{};<0mm,-3.0mm>*{}**@{.},
 <-0.5mm,0.5mm>*{};<-3mm,2mm>*{}**@{.},
 <-3mm,2mm>*{};<0mm,4mm>*{}**@{.},
 <0mm,4mm>*{\circ};<-2.3mm,2.3mm>*{}**@{},
 <0mm,4mm>*{};<0mm,7.4mm>*{}**@{.},
<0mm,0mm>*{};<2.2mm,1.5mm>*{}**@{.},
 <6mm,0mm>*{\circ};<0mm,0mm>*{}**@{},
 <6mm,4mm>*{};<3.8mm,2.5mm>*{}**@{.},
 <6mm,4mm>*{};<6mm,7.4mm>*{}**@{.},
 <6mm,4mm>*{\circ};<-2.3mm,2.3mm>*{}**@{},
 <0mm,4mm>*{};<6mm,0mm>*{}**@{.},
<6mm,4mm>*{};<9mm,2mm>*{}**@{.},
<6mm,0mm>*{};<9mm,2mm>*{}**@{.},
<6mm,0mm>*{};<6mm,-3mm>*{}**@{.},
   <-1.8mm,2.8mm>*{};<0mm,7.8mm>*{^1}**@{},
   <-2.8mm,2.9mm>*{};<0mm,-4.3mm>*{_1}**@{},
<-1.8mm,2.8mm>*{};<6mm,7.8mm>*{^2}**@{},
   <-2.8mm,2.9mm>*{};<6mm,-4.3mm>*{_2}**@{},
 \end{xy}
\Ea=0
\right.
\Eeq
which are not quadratic in the properadic sense (it is proven, however, in \cite{MV} that $\Assb$ is {\em homotopy Koszul}). A minimal resolution, $(\Assb_\infty,\delta)$ of $\Assb$ exists and is generated by a relatively ``small"
$\bS$-bimodule
 $ A=\{ A(m,n)\}_{m,n\geq 1, m+n\geq 3}$,
\[
 A(m,n):= \K[\bS_m]\ot \K[\bS_n][m+n-3]=\mbox{span}\left\langle
\Ba{c}
\resizebox{19mm}{!}{\begin{xy}
 <0mm,0mm>*{\circ};<0mm,0mm>*{}**@{},
 <0mm,0mm>*{};<-8mm,5mm>*{}**@{.},
 <0mm,0mm>*{};<-4.5mm,5mm>*{}**@{.},
 <0mm,0mm>*{};<-1mm,5mm>*{\ldots}**@{},
 <0mm,0mm>*{};<4.5mm,5mm>*{}**@{.},
 <0mm,0mm>*{};<8mm,5mm>*{}**@{.},
   <0mm,0mm>*{};<-10.5mm,5.9mm>*{^{\tau(1)}}**@{},
   <0mm,0mm>*{};<-4mm,5.9mm>*{^{\tau(2)}}**@{},
   <0mm,0mm>*{};<10.0mm,5.9mm>*{^{\tau(m)}}**@{},
 <0mm,0mm>*{};<-8mm,-5mm>*{}**@{.},
 <0mm,0mm>*{};<-4.5mm,-5mm>*{}**@{.},
 <0mm,0mm>*{};<-1mm,-5mm>*{\ldots}**@{},
 <0mm,0mm>*{};<4.5mm,-5mm>*{}**@{.},
 <0mm,0mm>*{};<8mm,-5mm>*{}**@{.},
   <0mm,0mm>*{};<-10.5mm,-6.9mm>*{^{\sigma(1)}}**@{},
   <0mm,0mm>*{};<-4mm,-6.9mm>*{^{\sigma(2)}}**@{},
   <0mm,0mm>*{};<10.0mm,-6.9mm>*{^{\sigma(n)}}**@{},
 \end{xy}}\Ea
\right\rangle_{\tau\in \bS_n\atop \sigma\in \bS_m},
\]
The differential $\delta$ in $\Assb_\infty$ is not
quadratic, and its explicit value on generic $(m,n)$-corolla is not
known at present, but we can  (and will) assume from now on that $\delta$
preserves
the {\em path grading}\, of  $\Assb_\infty$ (which associates to any graph, $G$, from
$\Assb_\infty$ the total number of directed paths connecting input legs of $G$ to the output ones, see \cite{MaVo} for more details).

\mip

 Let $V$ be a $\Z$-graded
vector space over a field $\K$ of characteristic zero. The associated symmetric tensor algebra
$\f_V:= {\odot^{\bullet}} V= \oplus_{n\geq 0} \odot^n V$
comes equipped  with the standard graded commutative and
co-commutative bialgebra structure, i.e.\ there is a non-trivial representation,
\Beq\label{2: rho_0}
\rho_0: \Assb \lon \cE nd_{\f_V}.
\Eeq
According to \cite{MV}, the deformation complex
$$
C_{GS}^\bu\left(\f_V,\f_V\right)=\Def\left(\Assb \stackrel{\rho_0}{\lon} \cE nd_{\f_V}\right)\simeq \prod_{m,n\geq 1}\Hom(\f_V^{\ot m}, \f_V^{\ot n})[2-m-n]
$$
and its polydifferential subcomplex $C_{poly}^\bu\left(\f_V,\f_V\right)$ come equipped with a $\caL ie_\infty$ algebra structure, $
\left\{\mu_n: \wedge^n C_{GS}^\bu(\f_V,\f_V)\lon C_{GS}^\bu(\f_V,\f_V)[2-n]\right\}_{n\geq 1},
$
such that $\mu_1$ coincides precisely with the Gerstenhaber-Shack differential.
According to \cite{GS}, the cohomology of the complex $(C_{GS}^\bu(\f_V,\f_V), \mu_1)$ is precisely
the vector space $\fg_V$; moreover, it is not hard to see that the operation $\mu_2$ induces
the Lie brackets $\{\ ,\ \}$ in $\fg_V$. As we show in this paper, the set  of $\caL ie_\infty$ quasi-isomorphisms,
\Beq\label{3 formaily maps}
\left\{F: \left(\fg_V[2],\{\ ,\ \}\right) \lon \left(C_{GS}^\bu(\f_V,\f_V), \mu_\bu\right)\right\},
\Eeq
is always non-empty.

\bip

\bip

{\Large
\section{\bf A prop governing formality maps for Lie bialgebras}
}

\bip

\subsection{An operad of directed graphs}
Let $G_{n,l}$ be a set of directed graphs $\Ga$ with $n$ vertices and $l$  edges such that
some bijections $V(\Ga)\rar [n]$ and $E(\Ga)\rar [l]$ are fixed, i.e.\ every edges and every vertex of $\Ga$ has a fixed numerical label. There is
a natural right action of the group $\bS_n \times  \bS_l$ on the set $G_{n,l}$ with $\bS_n$ acting by relabeling the vertices and  $\bS_l$ by relabeling the
edges. 

\sip

For each fixed integer $d$, a collection of $\bS_n$-modules,
$$
d\cG ra_{d}=\left\{d\cG ra_d(n)= \prod_{l\geq 0} \K \langle G_{n,l}\rangle \ot_{ \bS_l}  \sgn_l^{\ot |d-1|} [l(d-1)]   \right\}_{n\geq 1}
$$
 is an operad with respect to the following operadic composition,
$$
\Ba{rccc}
\circ_i: &  d\cG ra_d(n) \times d\cG ra_d(m) &\lon & d\cG ra_d(m+n-1),  \ \ \forall\ i\in [n]\\
         &       (\Ga_1, \Ga_2) &\lon &      \Ga_1\circ_i \Ga_2,
\Ea
$$
where  $\Ga_1\circ_i \Ga_2$ is defined by substituting the graph $\Ga_2$ into the $i$-labeled vertex $v_i$ of $\Ga_1$ and taking a sum over  re-attachments of dangling edges (attached before to $v_i$) to vertices of $\Ga_2$
in all possible ways.

\sip

The operad of directed graphs $d\cG ra_d$ contains a suboperad $\cG ra^{or}$ spanned by graphs
with no {\em closed}\, paths of directed edges (wheels); we call such graphs {\em oriented}.

\sip

For any operad $\cP=\{\cP(n)\}_{n\geq 1}$  in the category of graded vector spaces,
the linear the map
$$
\Ba{rccc}
[\ ,\ ]:&  \sP \ot \sP & \lon & \sP\\
& (a\in \cP(n), b\in \cP(m)) & \lon &
[a, b]:= \sum_{i=1}^n a\circ_i b - (-1)^{|a||b|}\sum_{i=1}^m b\circ_i a\ \in \cP(m+n-1)
\Ea
$$
makes a graded vector space
$
\sP:= \prod_{n\geq 1}\cP(n)$
into a Lie algebra \cite{KM}; moreover, these brackets induce a Lie algebra structure on the subspace
of invariants
$
\sP^\bS:=  \prod_{n\geq 1}\cP(n)^{\bS_n}$. In particular,
the graded vector spaces
$$
\mathsf{dFGC}_{d}:= \prod_{n\geq 1} d\cG ra_{d}(n)^{\bS_n}[d(1-n)], \ \ \ \ \
\mathsf{FGC}^{or}_{d}:= \prod_{n\geq 1} \cG ra^{or}_{d}(n)^{\bS_n}[d(1-n)]
$$
are Lie algebra with respect to the above Lie brackets, and as such they can be identified
with the deformation complexes $\Def(\caL ie_d\stackrel{0}{\rar} d\cG ra_{d})$ and, respectively,
 $\Def(\caL ie_d\stackrel{0}{\rar} \cG ra_{d}^{or})$ of athe zero morphism. Hence non-trivial Maurer-Cartan elements of $(\mathsf{dFGC}_{d}/\mathsf{FGC}_d^{or}, [\ ,\ ])$ give us non-trivial morphisms of operads
$$
f:\caL ie_d {\lon} d\cG ra_{d}\ \ \ \mathrm{and,\ respectively}\ \ \ \ f:\caL ie_d {\lon} \cG ra_{d}^{or}
$$
 One such non-trivial morphism $f$ is given explicitly on the generator of $\caL ie_{d}$ by \cite{Wi}
\Beq\label{2:  map from Lie to dgra}
f \left(\Ba{c}\begin{xy}
 <0mm,0.66mm>*{};<0mm,3mm>*{}**@{-},
 <0.39mm,-0.39mm>*{};<2.2mm,-2.2mm>*{}**@{-},
 <-0.35mm,-0.35mm>*{};<-2.2mm,-2.2mm>*{}**@{-},
 <0mm,0mm>*{\circ};<0mm,0mm>*{}**@{},
   <0.39mm,-0.39mm>*{};<2.9mm,-4mm>*{^{_2}}**@{},
   <-0.35mm,-0.35mm>*{};<-2.8mm,-4mm>*{^{_1}}**@{},
\end{xy}\Ea\right)=
\Ba{c}\resizebox{6.3mm}{!}{\xy
(0,1)*+{_1}*\cir{}="b",
(8,1)*+{_2}*\cir{}="c",
\ar @{->} "b";"c" <0pt>
\endxy}
\Ea  - (-1)^d
\Ba{c}\resizebox{7mm}{!}{\xy
(0,1)*+{_2}*\cir{}="b",
(8,1)*+{_1}*\cir{}="c",
\ar @{->} "b";"c" <0pt>
\endxy}
\Ea=:\xy
 (0,0)*{\bullet}="a",
(5,0)*{\bu}="b",
\ar @{->} "a";"b" <0pt>
\endxy
\Eeq
Note that elements of $\mathsf{dFGC}_{d}$ can be identified with graphs from $\caD\cG ra_d$ whose vertices' labels are symmetrized (for $d$ even) or skew-symmetrized (for $d$ odd) so that in pictures we can forget about labels of vertices  and denote them by unlabelled black bullets as in the formula above. Note also that graphs from  $\mathsf{dFGC}_{d}$ come equipped with an orientation, $or$, which is a choice of ordering of edges (for $d$ even) or a choice of ordering of vertices (for $d$ odd) up to an even permutation on both cases. Thus every graph $\Ga\in \mathsf{dFGC}_{d}$  has at most two different orientations, $pr$ and $or^{opp}$, and one has
the standard relation, $(\Ga, or)=-(\Ga, or^{opp})$; as usual, the data $(\Ga, or)$ is abbreviate by $\Ga$ (with some choice of orientation implicitly assumed).  Note that the homological degree of graph $\Ga$ from $\mathsf{dFGC}_{d}$ is given by
$
|\Ga|=d(\# V(\Ga) -1) + (1-d) \# E(\Ga).
$

\sip

The above morphism (\ref{2:  map from Lie to dgra}) makes
 $(\mathsf{dFGC}_{d}, [\ ,\ ])$ (resp., $(\mathsf{FGC}^{or}_{d}, [\ ,\ ])$ into a {\em differential}\, Lie algebra with the differential
 $$
 \delta:= [\xy
 (0,0)*{\bullet}="a",
(5,0)*{\bu}="b",
\ar @{->} "a";"b" <0pt>
\endxy ,\ ].
 $$
 The dg Lie algebra  $\mathsf{dFGC}_{d}$ (resp., $\mathsf{FGC}^{or}_{d}$) contains a  dg subalgebra $\mathsf{df{GC}}_{d}$ (resp.\ $\mathsf{fGC}^{or}_{d}$) spanned by graphs
with at least bivalent vertices and with no bivalent vertices of the form  $\xy
(0,0)*{}="a",
(4,0)*{\bu}="b",
(8,0)*{}="c",
\ar @{->} "a";"b" <0pt>\ar @{->} "b";"c" <0pt>
\endxy$.
 It was proven in \cite{Wi2} that
$$
H^\bu(\mathsf{dFGC}_{d})= \mathsf{dfGC}_{d}\ \ \ \text{and} \ \ \ H^\bu(\mathsf{FGC}^{or}_{d})= \mathsf{fGC}^{or}_{d}
$$
 Furthermore, $\mathsf{dfGC}_{d}$ and $\mathsf{fGC}^{or}_{d}$ contains dg Lie subalgebras $\mathsf{dGC}_{d}$ and respectively $\mathsf{GC}^{or}_d$  spanned by {\em connected}\,  graphs, and one has
$$
\mathsf{dfGC}_{d}= \odot^{\bu\geq 1}\left(\mathsf{dGC}_{d}[-d]\right)[d]
\ \ \ \text{and}\ \ \
\mathsf{fGC}^{or}_{d}= \odot^{\bu\geq 1}\left(\mathsf{GC}^{or}_{d}[-d]\right)[d].
$$
 Moreover, there is an isomorphism of graded vector spaces \cite{Wi2},
$$
H^\bu(\mathsf{GC}^{or}_{d+1})=H^\bu(\mathsf{dGC}_d),
$$
and an isomorphism of Lie algebras \cite{Wi},
$$
H^0(\mathsf{dGC}_{2})=\fg\fr\ft_1,
$$
where $\fg\fr\ft_1$ is the Lie algebra of the Grothendieck-teichm\"u ller group $GRT_1$ introduced by Drinfeld \cite{D2} in the context of deformation quantization of Lie bialgebras. In particular, one has a remarkable isomorphism of Lie algebras,
$
H^0(\mathsf{GC}_3^{or})=\fg\fr\ft_1$. Moreover \cite{Wi2}, $H^i(\mathsf{GC}_3^{or})=0$ for $i\leq 2$ and $H^{-1}(\mathsf{GC}_3^{or})$
is a 1-dimensional space generated by the graph $
\Ba{c}\resizebox{4mm}{!}{   \xy
   \ar@/^0.6pc/(0,-5)*{\bullet};(0,5)*{\bullet}
   \ar@/^{-0.6pc}/(0,-5)*{\bullet};(0,5)*{\bullet}
 \endxy}\Ea
$. Hence
\Beqrn
H^0(\mathsf{fGC}^{or}_{3}) &=& \sum_{k\geq 1}\sum_{3-3k=i_1+\ldots+i_k\atop i_1,\ldots,i_k\geq -1} H^{i_1}\left(\mathsf{GC}^{or}_{3}\right)\wedge \ldots \wedge H^{i_1}\left(\mathsf{GC}^{or}_{3}\right) \\
&=& H^0(\mathsf{GC}_3^{or})\\
&=&\fg\fr\ft_1.
\Eeqrn

Let $\wLB_\infty$ be the genus completion
of the properad $\LB_\infty$. Then, according to \cite{MW2}, there is a canonical morphism of dg Lie algebras
$$
f: \mathsf{GC}_3^{or}\to \Der(\wLB_\infty)
$$
which is a quasi-isomorphism up to one class corresponding to the standard rescaling derivation of $\wLB_\infty$, multiplying the generator of arity $(m,n)$ by $m+n-2$.
Here $\Der(\wLB_\infty)$ is the dg Lie algebra of derivations of $\wLB_\infty$ as a {\em properad}. It is useful to add to
$\mathsf{GC}_{d}^{or}$  one extra generator in degree zero corresponding to the rescaling derivation.
So {\em from now on we are working with the extension}
$$
\GC_{d}^\uparrow:= \mathsf{GC}_{d}^{or}\oplus \K[0]
$$
of this graph complex, where the Lie bracket is extended such that the bracket with the rescaling class multiplies a graph by twice the number of loops.
We also consider the full graph complex,
$$
\mathsf{fGC}_{d}^\uparrow:=\odot^{\bu\geq 1}\left(\mathsf{GC}_{d}^\uparrow[-d]\right)[d],
$$
Then the map $f$ above extends to a quasi-isomorphism,
$$
f: \mathsf{{GC}}_3^\uparrow\to \Der(\wLB_\infty)
$$
 and
$$
H^0(\mathsf{fGC}^\uparrow_{3})=H^0(\mathsf{GC}^\uparrow_{3})=H^0(\mathsf{GC}_{3}^{or})
\oplus \K=\fg\fr\ft_1\oplus \K=\fg\fr\ft,
$$
is the Lie algebra of the full Grothendieck-Teichm\"uller group $GRT=GRT_1\rtimes \K^*$.
This extension becomes especially useful when one works with props rather than with properads. The properad  $\wLB_\infty$ has an associated dg prop which is usually denoted by the same letter; we denote the dg Lie algebra of derivations of $\wLB_\infty$ as a {\em prop}\, by $\Der(\wLB_\infty)_{\mathrm{prop}}$. As the functor from the category of properads to the category of props is exact,  the above result immediately implies existence of quasi-isomorphism  of dg Lie algebras
 \Beq\label{4: morphism from fGC_3 to derLieb-prop}
F: \mathsf{fGC}_{3}^\uparrow\to \Der(\wLB_\infty)_{\mathrm{prop}}
\Eeq
Therefore we conclude
\Beq\label{4: Formula for H^0 DerLieb-prop}
H^0\left(\Der(\wLB_\infty)_{\mathrm{prop}}\right)=H^0(\mathsf{fGC}_{3}^\uparrow) =\fg\fr\ft.
\Eeq

\subsection{A canonical representation of $\cG ra_3^{or}$ in $\fg_V$}\label{4: subsec on canonical repr of Gra}
For {\em any}\, graded vector space the operad  $\cG ra_3^{or}$ has a canonical representation in the associated vector space $\fg_V$,
\Beq\label{3: Gra representation in g_V}
\Ba{rccc}
\rho: & \cG ra^{or}_3(n) & \lon & \cE  nd_{\fg_V}(n)=
\Hom( \fg_V^{\ot n},\fg_V)\\
      & \Ga &\lon & \Phi_\Ga
\Ea
\Eeq
given by the formula,
$$
\Phi_\Ga(\ga_1,\ldots, \ga_n) :=\mu\left(\prod_{e\in E(\Ga)}
\Delta_e \left(\ga_1(\psi)\ot \ga_2(\psi)\ot \ldots\ot
\ga_n(\psi) \right)\right)
$$
where, for an edge $e=\Ba{c}\xy
(0,2)*{_{a}},
(6,2)*{_{b}},
 (0,0)*{\bullet}="a",
(6,0)*{\bu}="b",
\ar @{->} "a";"b" <0pt>
\endxy \Ea$ connecting a vertex labeled by $a\in [n]$ and to a vertex labelled by $b\in [n]$, we set
$$
\Delta_e \left(\ga_1\ot \ga_2\ot \ldots\ot
\ga_n \right)=
\left\{\Ba{cc}\underset{i\in I}{\sum}(-1)^{|\psi^i|(|\ga_a| + |\ga_{a+1}|+\ldots+ |\ga_{b-1}|)} \ga_1\ot ...\ot \frac{\p\ga_a}{\p \psi_i}\ot ...
\ot \frac{\p\ga_b}{\p \psi^i}\ot ... \ot
\ga_n & \mbox{for} \ a< b   \\
\underset{i\in I}{\sum}(-1)^{|\psi_i|(|\ga_b| + |\ga_{b+1}|+\ldots+ |\ga_{a-1}| +1 )} \ga_1\ot ... \ot \frac{\p\ga_b}{\p \psi^i}\ot ...
\ot \frac{\p\ga_a}{\p \psi_i}\ot ... \ot
\ga_n & \mbox{for} \ b< a
\Ea\right.
$$
and where  $\mu$ is the multiplication map,
$$
\Ba{rccc}
\mu:&   \fg_V^{\ot n} & \lon & \fg_V\\
   & \ga_1\ot \ga_2\ot \ldots \ot \ga_n &\lon & \ga_1 \ga_2 \cdots
   \ga_n.
\Ea
$$
If $V$ is {\em finite}\, dimensional, then the above formulae can be used to define a  representation of the operad  $d\cG ra_3$ in $\fg_V$.


\subsection{A prop of directed graphs}
Let $k\geq 0$, $m\geq 1$ and $n\geq 1$ be  integers.
Let  $\sG_{k;m,n}$ be a set of directed graphs  consists, by definition,
of directed oriented graphs $\Ga$ with $k$ vertices called {\em internal}, $m$ vertices called
{\em in}-vertices and $n$ vertices called {\em out}-vertices (i.e.\
there is a partition)
satisfying the following conditions
\Bi
\item[(i)] the set of vertices is partitioned,
 $
V(\Ga):=V_{int}(\Ga)\sqcup V_{in}(\Ga)\sqcup V_{out}(\Ga),
$
into three subsets of the cardinalities,
$$
\# V_{int}(\Ga)=k,\ \ \ \# V_{in}(\Ga)=m,\ \ \ \# V_{out}(\Ga)=n;
$$
elements of $V_{int}(\Ga)$ are called  {\em internal}\, vertices,
elements of  $V_{in}(\Ga)$ are called {\em in}-vertices, and
elements of  $V_{out}(\Ga)$ are called {\em out}-vertices;

\item[(iii)] the vertices are labelled via isomorphisms,
$$
i_{int}: V_{int}(\Ga)\rar [k],\ \ \ i_{in}: V_{in}(\Ga)\rar [m],\ \ \
i_{out}: V_{int}(\Ga)\rar [n];
$$
\item[(iv)] every in-vertex can have only outgoing edges (called {\em in-legs}), while every out-vertex can have only ingoing edges (called {\em in-legs});
\item[(v)] there are no edges connecting in-vertices to out-vertices, i.e.\
the set of all edges is  partitioned into a disjoint union, $E(\Ga):=E_{int}(\Ga)\sqcup E_{in}(\Ga)\sqcup E_{out}(\Ga)$,
where $E_{in}(\Ga)$ is the set of in-legs, $E_{out}(\Ga)$  the set of out-legs,
and $E_{int}(\Ga)$ the set of {\em internal} \, edges which connect internal vertices to internal ones;
\item[(vi)] the set of legs $E_{in}(\Ga)\sqcup E_{out}(\Ga)$ is totally ordered up to an even permutation.
\Ei
$$
\Ba{c}\resizebox{10mm}{!}{\xy
(0,13)*{\circ}="0",
 (0,7)*{\bu}="a",
(-5,2)*{\circ}="b_1",
(5,2)*{\circ}="b_2",
(-8,-2)*{}="c_1",
(-2,-2)*{}="c_2",
(2,-2)*{}="c_3",
\ar @{->} "a";"0" <0pt>
\ar @{<-} "a";"b_1" <0pt>
\ar @{<-} "a";"b_2" <0pt>
\endxy}
\Ea\in \sG_{1;1,2},
\ \ \
\Ba{c}\resizebox{14mm}{!}{\xy
(0,17)*{\circ}="u",
 (0,7)*{\bu}="a",
(-10,7)*{}="L",
(10,7)*{}="R",
(-5,2)*{\bu}="b_1",
(5,2)*{\bu}="b_2",
(-5,-3)*{\circ}="c_1",
(5,-3)*{\circ}="c_3",
\ar @{<-} "a";"b_1" <0pt>
\ar @{<-} "a";"b_2" <0pt>
\ar @{<-} "b_1";"c_1" <0pt>
\ar @{<-} "b_2";"c_3" <0pt>
\ar @{-} "b_1";"L" <0pt>
\ar @{<-} "u";"L" <0pt>
\ar @{-} "b_2";"R" <0pt>
\ar @{->} "R";"u" <0pt>
\ar @{->} "a";"u" <0pt>
\endxy}
\Ea\in \sG_{3;1,2},
\ \ \
\Ba{c}\resizebox{14mm}{!}{\xy
(0,17)*{\circ}="u",
(-5,12)*{\bu}="0",
 (0,7)*{\bu}="a",
(-10,7)*{}="L",
(10,7)*{}="R",
(-5,2)*{\bu}="b_1",
(5,2)*{\bu}="b_2",
(-5,-3)*{\circ}="c_1",
(5,-3)*{\circ}="c_3",
\ar @{->} "a";"0" <0pt>
\ar @{<-} "a";"b_1" <0pt>
\ar @{<-} "a";"b_2" <0pt>
\ar @{<-} "b_1";"c_1" <0pt>
\ar @{<-} "b_2";"c_3" <0pt>
\ar @{-} "b_1";"L" <0pt>
\ar @{<-} "0";"L" <0pt>
\ar @{-} "b_2";"R" <0pt>
\ar @{->} "R";"u" <0pt>
\ar @{->} "0";"u" <0pt>
\endxy}
\Ea\in \sG_{4;1,2}
 $$
where  in- and out-vertices are
depicted by small white circles, while internal vertices by small black ones.
 (and we do not show numerical labels of the vertices).

\sip

Define a graded vector space
$$
\cB\cG ra^{or}(k;m,n):=\R\langle\sG_{k;m,n}\rangle
$$
by assigning to each graph $\Ga\in \sG_{k;m,n}$ homological degree,
$$
|\Ga|=-2\#E_{int}(\Ga) -\#E_{in}(\Ga)-\# E_{out}(\Ga),
$$
i.e.\ by assigning to each internal edge degree $-2$ and to each leg degree $-1$. Consider next an $\bS$-bimodule,
$$
\cB\cG ra^{or}=\left\{\cB\cG ra^{or}(m,n):=\sum_{k\geq 0} \cB\cG ra^{or}(k;m,n)\right\}.
$$
We claim that it has a natural structure of a prop with
\Bi
\item horizontal composition
$$
\Ba{rccc}
\boxtimes: & \cB\cG ra^{or}(k;m,n) \ot\cB\cG ra^{or}(k';m',n') &\lon & \cB\cG ra^{or}(k+k';m+m',n+n')\\
 & \Ga\ot \Ga' & \lon & \Ga\boxtimes \Ga'
\Ea
$$
by taking the disjoint union of the graphs $\Ga$ and $\Ga'$ and relabelling
in-, out- and internal vertices of $\Ga'$ accordingly;
\item vertical composition,
$$
\Ba{rccc}
\circ: & \cB\cG ra^{or}(k;m,n) \ot\cB\cG ra^{or}(k';n,l) &\lon & \cB\cG ra^{or}(k+k';m,l)\\
 & \Ga\ot \Ga' & \lon & \Ga\circ\Ga',
\Ea
$$
by the following three step procedure: (a) erase all $m$ out-vertices of $\Ga$ and all $m$ in-vertices of $\Ga'$, (b) take a sum over all possible ways of attaching the hanging out-legs of $\Ga$ to hanging in-legs of $\Ga'$ as well as to out-vertices of $\Ga'$, and also attaching the remaining in-legs of $\Ga'$
to in-vertices of  $\Ga$, (c) relabel internal vertices of $\Ga'$ by adding the value $k$ to each label.
\Ei
For example, a vertical composition of the following two graphs,
$$
\Ba{rccc}
\circ: & \cB\cG ra^{or}(1;2,1) \ot\cB\cG ra^{or}(1;1,2) &\lon & \cB\cG
ra^\uparrow(2;2,2)\\
&
\Ba{c}\resizebox{10mm}{!}{ \xy
(-5,0)*{_1},
(5,0)*{_2},
(0,13)*{\circ}="0",
 (0,7)*{\bu}="a",
(-5,2)*{\circ}="b_1",
(5,2)*{\circ}="b_2",
\ar @{->} "a";"0" <0pt>
\ar @{<-} "a";"b_1" <0pt>
\ar @{<-} "a";"b_2" <0pt>
\endxy}\Ea
\ot
\Ba{c}
\resizebox{10mm}{!}{\xy
(-5,15)*{_1},
(5,15)*{_2},
(0,2)*{\circ}="0",
 (0,8)*{\bu}="a",
(-5,13)*{\circ}="b_1",
(5,13)*{\circ}="b_2",
\ar @{<-} "a";"0" <0pt>
\ar @{->} "a";"b_1" <0pt>
\ar @{->} "a";"b_2" <0pt>
\endxy} \Ea
 &\lon &  \Ga
\Ea
$$
is given by the following sum (cf.\ \S {\ref{2: Definition of DP prop}})
$$
\Ga=
\Ba{c}\resizebox{10mm}{!}{
\xy
(-5,0)*{_1},
(5,0)*{_2},
(-5,20)*{^1},
(5,20)*{^2},
(-2,13)*{_2},
(-2,7)*{_1},
(0,13)*{\bu}="0",
 (0,7)*{\bu}="a",
(-5,2)*{\circ}="b_1",
(5,2)*{\circ}="b_2",
(-5,18)*{\circ}="u_1",
(5,18)*{\circ}="u_2",
\ar @{->} "a";"0" <0pt>
\ar @{<-} "a";"b_1" <0pt>
\ar @{<-} "a";"b_2" <0pt>
\ar @{->} "0";"u_1" <0pt>
\ar @{->} "0";"u_2" <0pt>
\endxy}
\Ea\ \ \ +\ \ \
\Ba{c}\resizebox{12mm}{!}{
\xy
(-5,0)*{_1},
(5,0)*{_2},
(-5,20)*{^1},
(5,20)*{^2},
(6,10)*{_2},
(-6,9)*{_1},
(4,10)*{\bu}="0",
 (-4,8)*{\bu}="a",
(-5,2)*{\circ}="b_1",
(5,2)*{\circ}="b_2",
(-5,18)*{\circ}="u_1",
(5,18)*{\circ}="u_2",
\ar @{->} "b_1";"0" <0pt>
\ar @{<-} "u_1";"a" <0pt>
\ar @{<-} "a";"b_1" <0pt>
\ar @{<-} "a";"b_2" <0pt>
\ar @{->} "0";"u_1" <0pt>
\ar @{->} "0";"u_2" <0pt>
\endxy}
\Ea\ \ \ +\ \ \
\Ba{c}\resizebox{12mm}{!}{
\xy
(-5,0)*{_1},
(5,0)*{_2},
(-5,20)*{^1},
(5,20)*{^2},
(6,10)*{_2},
(-6,8)*{_1},
(4,10)*{\bu}="0",
 (-4,8)*{\bu}="a",
(-5,2)*{\circ}="b_1",
(5,2)*{\circ}="b_2",
(-5,18)*{\circ}="u_1",
(5,18)*{\circ}="u_2",
\ar @{->} "b_2";"0" <0pt>
\ar @{<-} "u_1";"a" <0pt>
\ar @{<-} "a";"b_1" <0pt>
\ar @{<-} "a";"b_2" <0pt>
\ar @{->} "0";"u_1" <0pt>
\ar @{->} "0";"u_2" <0pt>
\endxy}
\Ea\ \ \ +\ \ \
\Ba{c}\resizebox{12mm}{!}{
\xy
(-5,0)*{_1},
(5,0)*{_2},
(-5,20)*{^1},
(5,20)*{^2},
(6,10)*{_2},
(-6,8)*{_1},
(4,10)*{\bu}="0",
 (-4,8)*{\bu}="a",
(-5,2)*{\circ}="b_1",
(5,2)*{\circ}="b_2",
(-5,18)*{\circ}="u_1",
(5,18)*{\circ}="u_2",
\ar @{->} "b_2";"0" <0pt>
\ar @{<-} "u_2";"a" <0pt>
\ar @{<-} "a";"b_1" <0pt>
\ar @{<-} "a";"b_2" <0pt>
\ar @{->} "0";"u_1" <0pt>
\ar @{->} "0";"u_2" <0pt>
\endxy}
\Ea\ \ \ +\ \ \
\Ba{c}\resizebox{12mm}{!}{
\xy
(-5,0)*{_1},
(5,0)*{_2},
(-5,20)*{^1},
(5,20)*{^2},
(6,10)*{_2},
(-6,8)*{_1},
(4,10)*{\bu}="0",
 (-4,8)*{\bu}="a",
(-5,2)*{\circ}="b_1",
(5,2)*{\circ}="b_2",
(-5,18)*{\circ}="u_1",
(5,18)*{\circ}="u_2",
\ar @{->} "b_1";"0" <0pt>
\ar @{<-} "u_2";"a" <0pt>
\ar @{<-} "a";"b_1" <0pt>
\ar @{<-} "a";"b_2" <0pt>
\ar @{->} "0";"u_1" <0pt>
\ar @{->} "0";"u_2" <0pt>
\endxy}\Ea
$$
Note that the $\bS$-bimodule $\{\cB \cG ra^{or}(0;m,n)\}$ is a subprop
of $\cB \cG ra^\uparrow$ isomorphic to $\CB$.

\sip

Note also that we can also define compositions
$$
\circ: \cG ra^{or}_3(p_1) \ot \cG ra^{or}_3(p_2)\ot\ldots \ot
 \cG ra^{or}_3(p_k)\ot
\cB\cG ra^{or}(k,m,n)\lon \cB\cG ra^{or}(p_1+p_2+\ldots+p_k,m,n)
$$
by substituting $k$ graphs from $\cG ra_3^{or}$ into internal vertices of a graph from $\cB\cG ra^{or}(k,m,n)$ and redistributing edges in exactly the same way as in the definition of the operadic composition in $\cG ra_3^{or}$ (and setting to zero all graphs which do not satisfy  the condition that every black vertex has at least one incoming edge and at least one outgoing edge). For example,
\Beq\label{3: Example of prop coomp}
\Ba{rccc}
\bu: &\cB\cG ra(1;1,1)\ot \cG ra^{or}(2)  &\lon & \cB\cG ra(2;1,1)
\\
& \xy
 (0,0)*{\bullet}="a",
(0,5)*{\circ}="u",
(0,-5)*{\circ}="d",
\ar @{->} "d";"a" <0pt>
\ar @{->} "a";"u" <0pt>
\endxy \ot
\xy
(0,2)*{_{1}},
(7,2)*{_{2}},
 (0,0)*{\bullet}="a",
(7,0)*{\bu}="b",
\ar @{->} "a";"b" <0pt>
\endxy &\lon & \xy
(-2,0)*{_{1}},
(-2,5)*{_{2}},
 (0,0)*{\bullet}="a",
(0,5)*{\bu}="b",
(0,10)*{\circ}="u",
(0,-5)*{\circ}="d",
\ar @{->} "d";"a" <0pt>
\ar @{->} "a";"b" <0pt>
\ar @{->} "b";"u" <0pt>
\endxy
\Ea
\Eeq

If we denote by the same symbol  $\cG ra_3^{or}$ the {\em prop}\, generated
by the operad $\cG ra_3^{or}$ then we conclude that the data
$$
\cB ra^{or}:= \left\{\cB\cG ra^{or}, \cG ra_3^{or}\right\}
$$
generates a {\em 2-coloured}\, prop in the category of graded vector spaces.

\mip

Abusing notations, let us denote by $\caL ie_3$ the prop generated by the operad
$\caL ie\{2\}$. Then formula (\ref{2:  map from Lie to dgra}) says that there is a canonical morphism of props,
\Beq\label{3: i_1: Lie to Bra}
i_1: \caL ie_3 \lon \cB ra^{or}.
\Eeq
{ There is also a morphism of props
\Beq\label{3: i_2 AssB to Bra}
i_2: \Assb\lon \cB ra^{or}
\Eeq
given on the generators as follows,}
$$
i_2\left(\begin{xy}
 <0mm,-0.55mm>*{};<0mm,-3.5mm>*{}**@{.},
 <0.5mm,0.5mm>*{};<2.2mm,2.2mm>*{}**@{.},
 <-0.48mm,0.48mm>*{};<-2.2mm,2.2mm>*{}**@{.},
 <0mm,0mm>*{\circ};<0mm,0mm>*{}**@{},
 <0.5mm,0.5mm>*{};<2.7mm,2.8mm>*{^2}**@{},
 <-0.48mm,0.48mm>*{};<-2.7mm,2.8mm>*{^1}**@{},
 \end{xy}\right)=
 \Ba{c}\resizebox{8mm}{!}{ \xy
(-3,9)*{^1},
(3,9)*{^2},
 (0,2)*{\circ}="a",
(-3,7)*{\circ}="b_1",
(3,7)*{\circ}="b_2",
 \endxy}\Ea  \ \ \ \ \ \ \ \ \ \ , \ \ \ \ \ \ \ \ \ \ \
i_2\left(\begin{xy}
 <0mm,0.66mm>*{};<0mm,4mm>*{}**@{.},
 <0.39mm,-0.39mm>*{};<2.2mm,-2.2mm>*{}**@{.},
 <-0.35mm,-0.35mm>*{};<-2.2mm,-2.2mm>*{}**@{.},
 <0mm,0mm>*{\circ};<0mm,0mm>*{}**@{},
   <0.39mm,-0.39mm>*{};<2.9mm,-4mm>*{^2}**@{},
   <-0.35mm,-0.35mm>*{};<-2.8mm,-4mm>*{^1}**@{},
\end{xy}\right)=\Ba{c}\resizebox{8mm}{!}{ \xy
(-3,0)*{_1},
(3,0)*{_2},
 (0,7)*{\circ}="a",
(-3,2)*{\circ}="b_1",
(3,2)*{\circ}="b_2",
 \endxy}\Ea
$$

\subsubsection{\bf A canonical representation of the prop $\cB ra^{or}$}\label{4: Subsection on repr of Bra}
Let $V$ be an arbitrary graded vector space with a basis $\{x^i\}$ so that
$\f_V=\K[x^i]$ and $\fg_V=\K[[\psi^i,\psi_i]]$ with $\psi^i=sx^i$. Let
$$
\cE nd_{\f_V,\ \fg_V}=\{\Hom( \fg_V^{\ot p}\ot \f_V^{\ot m},   \fg_V^{\ot q}\ot \f_V^{\ot n}\}
$$
 be the two coloured endomorphism prop of vector spaces $\f_V$ (in white colour) and $\fg_V$ (in black colour). A representation,
$$
\rho:  \cB ra^{or} \lon   \cE nd_{\f_V,\ \fg_V}
$$
is uniquely determined by its  values on the generators. The values of $\rho$ on the generators of $\cG ra_3^{ar}$ are given by (\ref{3: Gra representation in g_V}). The value,
$$
\rho_\Ga\in \Hom\left(\fg_V^{\ot k}\ot \f_V^{\ot m} ,  \f_V^{\ot n}\right),
$$
of $\rho$ on a graph $\Ga\in \sG_{k;m,n}$ of $\cB ra^{or}$ is defined
as a composition of maps,
\Beq\label{4: formular for rho_Ga for Bra}
\Ba{rccccccc}
\rho_{\Ga}: & \fg_V^{\ot k}\ot \f_V^{\ot m} \hspace{-3mm}&\lon & \hspace{-1mm} \f_V^{\ot n}\ot\fg_V^{\ot k}\ot \f_V^{\ot m} &\stackrel{\Psi_\Ga}{\lon} &
 \hspace{-1mm}
 \f_V^{\ot n}\ot \f_V^{\ot m}  \hspace{-2mm} & \stackrel{\mu}{\lon} &   \hspace{-2mm} \f_V^{\ot n}\\
& \hspace{-3mm}\ga_1\ot ...\ot \ga_k\ot f_{{1}}\ot ...\ot f_{{m}}\hspace{-1mm}
& \hspace{-3mm} \lon & 1^{\ot n}\ot \ga_1\ot ...\ot \ga_k\ot f_{{1}}\ot ...\ot f_{{m}} & & & &
\Ea
\Eeq
where

$$
\Psi_\Ga(\underbrace{1,...,1}_n, \ga_1,..., \ga_k,f_{{1}},..., f_{{m}})
= \hspace{-2mm}\left(\prod_{\overline{e}\in E_{out}(\Ga)} \hspace{-4mm}\Delta_{\overline{e}}\prod_{{e}\in E_{int}(\Ga)}\hspace{-4mm}\Delta_e \prod_{\underline{e}\in E_{in}(\Ga)}\hspace{-4mm}\Delta_{\underline{e}}
(\underbrace{1\ot ...\ot 1}_n\ot \ga_1\ot ...\ot \ga_k\ot f_{{1}}\ot ...\ot f_{{m}})\right)_{ \hspace{-1mm}\psi_i=0\atop \hspace{-1mm} \psi^i=0}
$$
with
\Bi
\item for an in-leg  $e=\Ba{c}\xy
(0,2)*{_{{\al}}},
(6,2)*{_{a}},
 (0,0)*{\circ}="a",
(6,0)*{\bu}="b",
\ar @{->} "a";"b" <0pt>
\endxy \Ea$ connecting ${\al}$-th in-vertex to $a$-th internal vertex, $\al\in [m]$, $a\in [k]$,
$$
\Delta_{\underline{e}}:=\sum_{i\in I}\underbrace{\frac{\p}{\p\psi^i}}_{\mathrm{acts\ on}\
 a-\mathrm{th}\atop \mathrm{tensor\ factor\ of}\ \fg_V^{\ot k}}\ot
\underbrace{\frac{\p}{\p x_i}}_{\mathrm{acts\ on}
 \  \al-\mathrm{th}\atop \mathrm{tensor\ factor\ of}\ \f_V^{\ot m}}
$$
\item for an internal edge  $e=\Ba{c}\xy
(0,2)*{_{{a}}},
(6,2.1)*{_{b}},
 (0,0)*{\bullet}="a",
(6,0)*{\bu}="b",
\ar @{->} "a";"b" <0pt>
\endxy \Ea$, $a,b\in [k]$,
$$
\Delta_{{e}}:=\sum_{i\in I}\underbrace{\frac{\p}{\p\psi^i}}_{\mathrm{acts\ on}\
 a\mathrm{-th}\atop \mathrm{tensor\ factor\ of}\ \fg_V^{\ot k}}\ot
\underbrace{\frac{\p}{\p \psi_i}}_{\mathrm{acts\ on}\
 b\mathrm{-th}\atop \mathrm{tensor\ factor\ of}\ \fg_V^{\ot k}}
$$
\item for an out-leg  $\overline{e}=\Ba{c}\xy
(0,2)*{_{a}},
(6,2.2)*{_{{\be}}},
 (0,0)*{\bu}="a",
(6,0)*{\circ}="b",
\ar @{->} "a";"b" <0pt>
\endxy \Ea$, $a\in [k]$, $\be\in [n]$,
$$
\Delta_{\overline{e}}:=\sum_{i\in I}\underbrace{x_i}_{\mathrm{acts\ as\ multiplication\ on}\
 \be-\mathrm{th}\atop \mathrm{tensor\ factor\ of}\ 1^{\ot n}}\ot
\underbrace{\frac{\p}{\p \psi_i}}_{\mathrm{acts\ on}
 \  a-\mathrm{th}\atop \mathrm{tensor\ factor\ of}\ \fg_V^{\ot k}}
$$
\Ei
and where
$$
\Ba{rccc}
\mu: & \f_V^{\ot n}\ot \f_V^{\ot m} & \lon & \f_V^{\ot n}\\
     &  g_1\ot ...\ot g_n \ot h_1\ot...\ot h_m & \lon &
( g_1\ot ...\ot g_n)\cdot (\Delta^{n-1} (h_1\cdots h_m))
\Ea
$$
with $\cdot$ being the standard multiplication in the algebra $\f_V^{\ot n}$.

\mip

The proof of the claim that the above formula for $\rho(\Ga)$ gives a  morphism of props (i.e.\ respects prop compositions) is a straightforward untwisting of the definitions; in fact, the prop compositions in $\cB \cG ra^{or}$ have been defined just precisely for the purpose to make this claim true, i.e.\ they have been read out from the compositions of the operators $\rho(\Ga)$.

\subsubsection{\bf Remarks} (i)
Note that operators $\Delta_{\underline{e}}$ and $\Delta_{\overline{e}}$ have degree $-1$ (in contrast to operators $\Delta_e$ which have degree $-2$) so that the ordering of legs of a graph $\Ga$ from $\cB\cG ra^{or}$ is required
to make the definition of $\rho(\Ga)$ unambiguous. However, we shall see below that this choice is irrelevant for the application of the morphism $\rho$ to deformation quantization.

\sip

(ii) As our formula for $\rho(\Ga)$ involves evaluation at $\psi_\bu=\psi^\bu=0$, we can say that $\rho(\Ga)(\ga_1,...,\ga_k,f_1,...,f_m)$ is non-zero only in the case when
 for any $a\in [k]$
one has
$$
\ga_a\in \odot^{\# In(v_a)} V^*[1] \ot   \odot^{\# Out(v_a)} V[1]
$$
where $v_a$ is the $a$-labelled internal vertex of $\Ga$ and  $In(v_a)$
(resp., $Out(v_a)$) is the set of input (resp., output) half-edges.

\subsubsection{{\bf Examples} {\em (cf.\ \cite{Sh})}}

1) $\rho\left( \Ba{c}\resizebox{6mm}{!}{ \xy
(-3,0)*{_1},
(3,0)*{_2},
 (0,7)*{\circ}="a",
(-3,2)*{\circ}="b_1",
(3,2)*{\circ}="b_2",
 \endxy}\Ea \right): \f_V^{\ot 2}\rar \f_V$ is just the multiplication
in $\f_V$.

\sip

2)   $\rho\left(   \Ba{c}\resizebox{6mm}{!}{ \xy
(-3,9)*{^1},
(3,9)*{^2},
 (0,2)*{\circ}="a",
(-3,7)*{\circ}="b_1",
(3,7)*{\circ}="b_2",
 \endxy}\Ea  \right): \f_V\rar \f_V^{\ot 2}$ is  the comultiplication
$\Delta$ in $\f_V$.

3) If $\ga=\sum_{i,j,k\in I}C_{ij}^k\psi_k \psi^i\psi^j$, $C_{ij}^k\in \R$, then, for any
$f_1(x),f_2(x)\in \f_V$, one has (modulo a sign coming from a choice of ordering of legs)
$$
\rho\left( \Ba{c}\resizebox{7mm}{!}{ \xy
(-5,0)*{_1},
(5,0)*{_2},
(0,13)*{\circ}="0",
 (0,7)*{\bu}="a",
(-5,2)*{\circ}="b_1",
(5,2)*{\circ}="b_2",
\ar @{->} "a";"0" <0pt>
\ar @{<-} "a";"b_1" <0pt>
\ar @{<-} "a";"b_2" <0pt>
\endxy}\Ea\right)(\ga, f_1,f_2)=\sum_{i,j,k\in I} \pm x_k C^k_{ij} \frac{\p f_1}{\p x_i}
\frac{\p f_2}{\p x_j}
$$

4) If $\ga=\sum_{i,j,k\in I}\Phi^{ij}_k\psi_i\psi_j\psi^k$, $\Phi^{ij}_k\in \R$, then, for any
$f(x)\in \f_V$, one has (modulo a sign coming from a choice of ordering of legs)
$$
\rho\left( \Ba{c}
\resizebox{6mm}{!}{\xy
(-5,15)*{_1},
(5,15)*{_2},
(0,2)*{\circ}="0",
 (0,8)*{\bu}="a",
(-5,13)*{\circ}="b_1",
(5,13)*{\circ}="b_2",
\ar @{<-} "a";"0" <0pt>
\ar @{->} "a";"b_1" <0pt>
\ar @{->} "a";"b_2" <0pt>
\endxy} \Ea \right)(\ga, f)=\sum_{i,j,k\in I} \pm (x_i\ot x_j)\cdot \Phi_k^{ij} \Delta (\frac{\p f}{\p x_k})
$$

5) If f $\ga_1=\sum_{i,j,k\in I}C_{ij}^k\psi_k \psi^i\psi^j$,$\ga_2=\sum_{i,j,k\in I}\Phi^{ij}_k\psi_i\psi_j\psi^k$, $C_{ij}^k,\Phi^{ij}_k\in \R$, then, for any
$f_1,f_2\in \f_V$, one has (modulo a sign coming from a choice of ordering of legs)
$$
\rho\left( \Ba{c}
\resizebox{7mm}{!}{
\xy
(-5,0)*{_1},
(5,0)*{_2},
(-5,20)*{^1},
(5,20)*{^2},
(-2,13)*{_2},
(-2,7)*{_1},
(0,13)*{\bu}="0",
 (0,7)*{\bu}="a",
(-5,2)*{\circ}="b_1",
(5,2)*{\circ}="b_2",
(-5,18)*{\circ}="u_1",
(5,18)*{\circ}="u_2",
\ar @{->} "a";"0" <0pt>
\ar @{<-} "a";"b_1" <0pt>
\ar @{<-} "a";"b_2" <0pt>
\ar @{->} "0";"u_1" <0pt>
\ar @{->} "0";"u_2" <0pt>
\endxy}
\Ea
\right)(\ga, f)=\sum_{i,j,k,m,n\in I} \pm (x_m\ot x_n)\cdot \Phi_k^{mn} C^k_{ij} \Delta( \frac{\p f_1}{\p x_i}
\frac{\p f_2}{\p x_j})
$$

\subsection{A 2-coloured prop of $\caL ie_3$ actions on bialgebras} Consider an auxiliary 2-coloured prop (in straight and dashed colours), $\cQ ua$, generated by the following binary operations
\Beq\label{3: generators of Gac}
\underbrace{\begin{xy}
 <0mm,0.66mm>*{};<0mm,3mm>*{}**@{-},
 <0.39mm,-0.39mm>*{};<2.2mm,-2.2mm>*{}**@{-},
 <-0.35mm,-0.35mm>*{};<-2.2mm,-2.2mm>*{}**@{-},
 <0mm,0mm>*{\bu};<0mm,0mm>*{}**@{},
   <0.39mm,-0.39mm>*{};<2.9mm,-4mm>*{^2}**@{},
   <-0.35mm,-0.35mm>*{};<-2.8mm,-4mm>*{^1}**@{},
\end{xy}=-
\begin{xy}
 <0mm,0.66mm>*{};<0mm,3mm>*{}**@{-},
 <0.39mm,-0.39mm>*{};<2.2mm,-2.2mm>*{}**@{-},
 <-0.35mm,-0.35mm>*{};<-2.2mm,-2.2mm>*{}**@{-},
 <0mm,0mm>*{\bu};<0mm,0mm>*{}**@{},
   <0.39mm,-0.39mm>*{};<2.9mm,-4mm>*{^1}**@{},
   <-0.35mm,-0.35mm>*{};<-2.8mm,-4mm>*{^2}**@{},
\end{xy}}_{\mathrm{degree}\ -2},\ \ \ \
\underbrace{\begin{xy}
 <0mm,0.66mm>*{};<0mm,3mm>*{}**@{.},
 <0.39mm,-0.39mm>*{};<2.2mm,-2.2mm>*{}**@{.},
 <-0.35mm,-0.35mm>*{};<-2.2mm,-2.2mm>*{}**@{-},
 <0mm,0mm>*{\bu};<0mm,0mm>*{}**@{},
   <0.39mm,-0.39mm>*{};<2.9mm,-4mm>*{^2}**@{},
   <-0.35mm,-0.35mm>*{};<-2.8mm,-4mm>*{^1}**@{},
\end{xy}
}_{\mathrm{degree}\ -2},\ \ \ \
\underbrace{\begin{xy}
 <0mm,-0.55mm>*{};<0mm,-2.5mm>*{}**@{.},
 <0.5mm,0.5mm>*{};<2.2mm,2.2mm>*{}**@{.},
 <-0.48mm,0.48mm>*{};<-2.2mm,2.2mm>*{}**@{.},
 <0mm,0mm>*{\circ};<0mm,0mm>*{}**@{},
 <0mm,-0.55mm>*{};<0mm,-3.8mm>*{_1}**@{},
 <0.5mm,0.5mm>*{};<2.7mm,2.8mm>*{^2}**@{},
 <-0.48mm,0.48mm>*{};<-2.7mm,2.8mm>*{^1}**@{},
 \end{xy}
\ \
,\ \
\begin{xy}
 <0mm,-0.55mm>*{};<0mm,-2.5mm>*{}**@{.},
 <0.5mm,0.5mm>*{};<2.2mm,2.2mm>*{}**@{.},
 <-0.48mm,0.48mm>*{};<-2.2mm,2.2mm>*{}**@{.},
 <0mm,0mm>*{\circ};<0mm,0mm>*{}**@{},
 <0mm,-0.55mm>*{};<0mm,-3.8mm>*{_1}**@{},
 <0.5mm,0.5mm>*{};<2.7mm,2.8mm>*{^1}**@{},
 <-0.48mm,0.48mm>*{};<-2.7mm,2.8mm>*{^2}**@{},
 \end{xy}\ \ ,
 \begin{xy}
 <0mm,0.66mm>*{};<0mm,3mm>*{}**@{.},
 <0.39mm,-0.39mm>*{};<2.2mm,-2.2mm>*{}**@{.},
 <-0.35mm,-0.35mm>*{};<-2.2mm,-2.2mm>*{}**@{.},
 <0mm,0mm>*{\circ};<0mm,0mm>*{}**@{},
   <0mm,0.66mm>*{};<0mm,3.4mm>*{^1}**@{},
   <0.39mm,-0.39mm>*{};<2.9mm,-4mm>*{^2}**@{},
   <-0.35mm,-0.35mm>*{};<-2.8mm,-4mm>*{^1}**@{},
\end{xy}
\ \
,\ \
\begin{xy}
 <0mm,0.66mm>*{};<0mm,3mm>*{}**@{.},
 <0.39mm,-0.39mm>*{};<2.2mm,-2.2mm>*{}**@{.},
 <-0.35mm,-0.35mm>*{};<-2.2mm,-2.2mm>*{}**@{.},
 <0mm,0mm>*{\circ};<0mm,0mm>*{}**@{},
   <0mm,0.66mm>*{};<0mm,3.4mm>*{^1}**@{},
   <0.39mm,-0.39mm>*{};<2.9mm,-4mm>*{^1}**@{},
   <-0.35mm,-0.35mm>*{};<-2.8mm,-4mm>*{^2}**@{},
\end{xy}}_{\mathrm{degree}\ 0}
\Eeq
modulo Jacobi relations
\Beq\label{4: Jacobi relation for Lie}
\Ba{c}
 \begin{xy}
 <0mm,0mm>*{\bu};<0mm,0mm>*{}**@{},
 <0mm,0.69mm>*{};<0mm,3.0mm>*{}**@{-},
 <0.39mm,-0.39mm>*{};<2.4mm,-2.4mm>*{}**@{-},
 <-0.35mm,-0.35mm>*{};<-1.9mm,-1.9mm>*{}**@{-},
 <-2.4mm,-2.4mm>*{\bu};<-2.4mm,-2.4mm>*{}**@{},
 <-2.0mm,-2.8mm>*{};<0mm,-4.9mm>*{}**@{-},
 <-2.8mm,-2.9mm>*{};<-4.7mm,-4.9mm>*{}**@{-},
    <0.39mm,-0.39mm>*{};<3.3mm,-4.0mm>*{^3}**@{},
    <-2.0mm,-2.8mm>*{};<0.5mm,-6.7mm>*{^2}**@{},
    <-2.8mm,-2.9mm>*{};<-5.2mm,-6.7mm>*{^1}**@{},
 \end{xy}
\ + \
 \begin{xy}
 <0mm,0mm>*{\bu};<0mm,0mm>*{}**@{},
 <0mm,0.69mm>*{};<0mm,3.0mm>*{}**@{-},
 <0.39mm,-0.39mm>*{};<2.4mm,-2.4mm>*{}**@{-},
 <-0.35mm,-0.35mm>*{};<-1.9mm,-1.9mm>*{}**@{-},
 <-2.4mm,-2.4mm>*{\bu};<-2.4mm,-2.4mm>*{}**@{},
 <-2.0mm,-2.8mm>*{};<0mm,-4.9mm>*{}**@{-},
 <-2.8mm,-2.9mm>*{};<-4.7mm,-4.9mm>*{}**@{-},
    <0.39mm,-0.39mm>*{};<3.3mm,-4.0mm>*{^2}**@{},
    <-2.0mm,-2.8mm>*{};<0.5mm,-6.7mm>*{^1}**@{},
    <-2.8mm,-2.9mm>*{};<-5.2mm,-6.7mm>*{^3}**@{},
 \end{xy}
\ + \
 \begin{xy}
 <0mm,0mm>*{\bu};<0mm,0mm>*{}**@{},
 <0mm,0.69mm>*{};<0mm,3.0mm>*{}**@{-},
 <0.39mm,-0.39mm>*{};<2.4mm,-2.4mm>*{}**@{-},
 <-0.35mm,-0.35mm>*{};<-1.9mm,-1.9mm>*{}**@{-},
 <-2.4mm,-2.4mm>*{\bu};<-2.4mm,-2.4mm>*{}**@{},
 <-2.0mm,-2.8mm>*{};<0mm,-4.9mm>*{}**@{-},
 <-2.8mm,-2.9mm>*{};<-4.7mm,-4.9mm>*{}**@{-},
    <0.39mm,-0.39mm>*{};<3.3mm,-4.0mm>*{^1}**@{},
    <-2.0mm,-2.8mm>*{};<0.5mm,-6.7mm>*{^3}**@{},
    <-2.8mm,-2.9mm>*{};<-5.2mm,-6.7mm>*{^2}**@{},
 \end{xy}\Ea=0
\Eeq
the associative bialgebra relations (\ref{2: bialgebra relations}), and also the following ones
\Beq\label{3: Gac prop rel 1}
\Ba{c}
\xy
(-6,-8)*{_{1}},
(0,-8)*{_{2}},
(4,-4.5)*{_{3}},
 (0,0)*{\circ}="a",
(-3,-3)*{\bu}="b1",
(3,-3)*{}="b2",
 (0,3)*{}="u",
 (-6,-6)*{}="d1",
 (0,-6)*{}="d2",
\ar @{.} "a";"u" <0pt>
\ar @{-} "a";"b1" <0pt>
\ar @{.} "a";"b2" <0pt>
\ar @{-} "b1";"d1" <0pt>
\ar @{-} "b1";"d2" <0pt>
\endxy\Ea  -
\Ba{c}\xy
(-4,-4.5)*{_{1}},
(0,-8)*{_{2}},
(7,-8)*{_{3}},
 (0,0)*{\bu}="a",
(-3,-3)*{}="b2",
(3,-3)*{\bu}="b1",
 (0,3)*{}="u",
 (0,-6)*{}="d1",
 (6,-6)*{}="d2",
\ar @{.} "a";"u" <0pt>
\ar @{.} "a";"b1" <0pt>
\ar @{-} "a";"b2" <0pt>
\ar @{-} "b1";"d1" <0pt>
\ar @{.} "b1";"d2" <0pt>
\endxy\Ea
 +
\Ba{c}\xy
(-4,-4.5)*{_{2}},
(0,-8)*{_{1}},
(7,-8)*{_{3}},
 (0,0)*{\bu}="a",
(-3,-3)*{}="b2",
(3,-3)*{\bu}="b1",
 (0,3)*{}="u",
 (0,-6)*{}="d1",
 (6,-6)*{}="d2",
\ar @{.} "a";"u" <0pt>
\ar @{.} "a";"b1" <0pt>
\ar @{-} "a";"b2" <0pt>
\ar @{-} "b1";"d1" <0pt>
\ar @{.} "b1";"d2" <0pt>
\endxy\Ea=0,
\Eeq
\Beq\label{3: Gac prop rel 2}
\Ba{c}\xy
(-4,-4.5)*{_{1}},
(0,-8)*{_{2}},
(7,-8)*{_{3}},
 (0,0)*{\bu}="a",
(-3,-3)*{}="b2",
(3,-3)*{\circ}="b1",
 (0,3)*{}="u",
 (0,-6)*{}="d1",
 (6,-6)*{}="d2",
\ar @{.} "a";"u" <0pt>
\ar @{.} "a";"b1" <0pt>
\ar @{-} "a";"b2" <0pt>
\ar @{.} "b1";"d1" <0pt>
\ar @{.} "b1";"d2" <0pt>
\endxy\Ea   -
\Ba{c}\xy
(-6,-8)*{_{1}},
(0,-8)*{_{2}},
(4,-4.5)*{_{3}},
 (0,0)*{\circ}="a",
(-3,-3)*{\bu}="b1",
(3,-3)*{}="b2",
 (0,3)*{}="u",
 (-6,-6)*{}="d1",
 (0,-6)*{}="d2",
\ar @{.} "a";"u" <0pt>
\ar @{.} "a";"b1" <0pt>
\ar @{.} "a";"b2" <0pt>
\ar @{-} "b1";"d1" <0pt>
\ar @{.} "b1";"d2" <0pt>
\endxy\Ea
  -
\Ba{c}\xy
(-4,-4.5)*{_{2}},
(0,-8)*{_{1}},
(7,-8)*{_{3}},
 (0,0)*{\circ}="a",
(-3,-3)*{}="b2",
(3,-3)*{\bu}="b1",
 (0,3)*{}="u",
 (0,-6)*{}="d1",
 (6,-6)*{}="d2",
\ar @{.} "a";"u" <0pt>
\ar @{.} "a";"b1" <0pt>
\ar @{.} "a";"b2" <0pt>
\ar @{-} "b1";"d1" <0pt>
\ar @{.} "b1";"d2" <0pt>
\endxy\Ea=0, \ \  \ \ \
\Ba{c}
\xy
(-3,9)*{_{1}},
(3,9)*{_{2}},
 (0,0)*{\bu}="a",
(-3,-3)*{}="b2",
(3,-3)*{}="b1",
 (0,4)*{\circ}="u",
 (-3,7)*{}="u1",
 (3,7)*{}="u2",
\ar @{.} "a";"u" <0pt>
\ar @{.} "a";"b1" <0pt>
\ar @{-} "a";"b2" <0pt>
\ar @{.} "u";"u1" <0pt>
\ar @{.} "u";"u2" <0pt>
\endxy\Ea
  -
\Ba{c}\xy
(-3,9)*{_{1}},
(3,6)*{_{2}},
 (0,0)*{\circ}="a",
(-3,4)*{\bu}="b1",
(3,4)*{}="b2",
 (0,-3)*{}="d",
 (-3,7)*{}="u",
 (-6,0)*{}="l",
\ar @{.} "a";"d" <0pt>
\ar @{.} "a";"b1" <0pt>
\ar @{.} "a";"b2" <0pt>
\ar @{-} "b1";"l" <0pt>
\ar @{.} "b1";"u" <0pt>
\endxy\Ea
 -
\Ba{c}\xy
(-3,6)*{_{1}},
(3,9)*{_{2}},
 (0,0)*{\circ}="a",
(3,4)*{\bu}="b1",
(-3,4)*{}="b2",
 (0,-3)*{}="d",
 (3,7)*{}="u",
 (6,0)*{}="l",
\ar @{.} "a";"d" <0pt>
\ar @{.} "a";"b1" <0pt>
\ar @{.} "a";"b2" <0pt>
\ar @{-} "b1";"l" <0pt>
\ar @{.} "b1";"u" <0pt>
\endxy\Ea=0.
\Eeq
Rather surprisingly, this prop arises naturally in the context of formality maps for universal quantizations of Lie bialgebras.

\subsection{A prop  governing formality maps}
Sometimes it is useful to define a prop in terms of generators and relations, sometimes it is easier to define it by specifying its generic representation. We shall define a useful for our purposes prop $\cQ ua_\infty$ using both approaches (but start with the second one which explains its relation to formality maps)) and then prove
that $\cQ ua_\infty$ is a minimal resolution of $\cQ ua$.

\sip

Let $\cQ ua_\infty$ be a 2-coloured prop whose arbitrary representation,  $\rho: \cQ ua\rar\cE nd_{\fg,A}$, in a pair of graded vector spaces $\fg$ and $A$ provides these spaces with the following list of algebraic structures,
\Bi
\item[(i)] the structure of a $\caH olie_{3}$-algebra in $\fg$, i.e.\ a representation $\rho_1: \caH olie_{3}\rar \cE nd_{\fg}$ (note that $\fg[2]$ is then an ordinary $\caL ie_\infty$ algebra);
\item[(ii)] the structure of an $\Assb_\infty$-algebra in $A$,
 i.e.\ a representation $\rho_2: \Assb_\infty \rar \cE nd_{A}$
\item[(iii)] a morphism of $\caL ie_\infty$-algebras,
$$
F: \fg[2]\lon \Def(\Assb_\infty \stackrel{\rho}{\lon} \cE nd_A)
$$
\Ei
The prop $\cQ ua_\infty$ is  free with the space of generators in the colour responsible for datum (i)  given by
\Beq\label{4: generators of Lie_infty2}
E(n):=sgn_n[3n-4]=\left\langle\Ba{c}
 \resizebox{16mm}{!}{\xy
(1,-5)*{\ldots},
(-13,-7)*{_1},
(-8,-7)*{_2},
(-3,-7)*{_3},
(7,-7)*{_{n-1}},
(13,-7)*{_n},
 (0,0)*{\bu}="a",
(0,5)*{}="0",
(-12,-5)*{}="b_1",
(-8,-5)*{}="b_2",
(-3,-5)*{}="b_3",
(8,-5)*{}="b_4",
(12,-5)*{}="b_5",
\ar @{-} "a";"0" <0pt>
\ar @{-} "a";"b_2" <0pt>
\ar @{-} "a";"b_3" <0pt>
\ar @{-} "a";"b_1" <0pt>
\ar @{-} "a";"b_4" <0pt>
\ar @{-} "a";"b_5" <0pt>
\endxy}=(-1)^{\sigma}
\resizebox{18mm}{!}{\xy
(1,-6)*{\ldots},
(-13,-7)*{_{\sigma(1)}},
(-6.7,-7)*{_{\sigma(2)}},
(13,-7)*{_{\sigma(n)}},
 (0,0)*{\bu}="a",
(0,5)*{}="0",
(-12,-5)*{}="b_1",
(-8,-5)*{}="b_2",
(-3,-5)*{}="b_3",
(8,-5)*{}="b_4",
(12,-5)*{}="b_5",
\ar @{-} "a";"0" <0pt>
\ar @{-} "a";"b_2" <0pt>
\ar @{-} "a";"b_3" <0pt>
\ar @{-} "a";"b_1" <0pt>
\ar @{-} "a";"b_4" <0pt>
\ar @{-} "a";"b_5" <0pt>
\endxy}\Ea
\right\rangle_{\sigma\in \bS_n}
\Eeq
and the space of generators (of mixed colours) responsible for data (ii) and (iii)  given by,
\Beq\label{4: generators of Qua}
E(n,k+m):=\R[\bS_n]\ot \sgn_k \ot  \R[\bS_m][m+n+3k-3]=\mathrm{span}\left\langle
\Ba{c}
\resizebox{16mm}{!}{\xy
 (0,7)*{\overbrace{\ \ \ \  \ \ \ \ \ \ \ \ \ \ }},
 (0,9)*{^n},
 (0,3)*{^{...}},
 (5.6,-5.5)*{^{...}},
 (6,-7)*{\underbrace{  \ \ \ \ \ \ }},
 (6,-9)*{_m},
 (-5.6,-5.5)*{^{...}},
 (-6,-7)*{\underbrace{  \ \ \ \ \ \ }},
 (-6,-9)*{_k},
 (0,0)*{\circ}="0",
(-7,5)*{}="u_1",
(-4,5)*{}="u_2",
(4,5)*{}="u_3",
(7,5)*{}="u_4",
(2,-5)*{}="d_1",
(3.6,-5)*{}="d_2",
(9,-5)*{}="d_3",
(-2,-5)*{}="s_1",
(-3.6,-5)*{}="s_2",
(-9,-5)*{}="s_3",
\ar @{.} "0";"u_1" <0pt>
\ar @{.} "0";"u_2" <0pt>
\ar @{.} "0";"u_3" <0pt>
\ar @{.} "0";"u_4" <0pt>
\ar @{.} "0";"d_1" <0pt>
\ar @{.} "0";"d_2" <0pt>
\ar @{.} "0";"d_3" <0pt>
\ar @{-} "0";"s_1" <0pt>
\ar @{-} "0";"s_2" <0pt>
\ar @{-} "0";"s_3" <0pt>
\endxy}\Ea
\right\rangle_{n\geq 1,m\geq1, k\geq 0\atop
n+k+m\geq 3}
\Eeq
Corollas with $k=0$ take care about datum (ii), i.e.\ they are the standard generators of $\Assb_\infty$, while corollas with $k\geq 1$ take care about
datum (ii). The prop $\cQ ua_\infty$ is differential; the values of its differential
$d$ on
\Bi
\item the generators (\ref{4: generators of Lie_infty2})
of $\caH olie_3$ is given by the standard formula,
$$
d\hspace{-3mm}
{ \xy
(1,-5)*{\ldots},
(-13,-7)*{_1},
(-8,-7)*{_2},
(-3,-7)*{_3},
(7,-7)*{_{n-1}},
(13,-7)*{_n},
 (0,0)*{\bu}="a",
(0,5)*{}="0",
(-12,-5)*{}="b_1",
(-8,-5)*{}="b_2",
(-3,-5)*{}="b_3",
(8,-5)*{}="b_4",
(12,-5)*{}="b_5",
\ar @{-} "a";"0" <0pt>
\ar @{-} "a";"b_2" <0pt>
\ar @{-} "a";"b_3" <0pt>
\ar @{-} "a";"b_1" <0pt>
\ar @{-} "a";"b_4" <0pt>
\ar @{-} "a";"b_5" <0pt>
\endxy}
=
\sum_{ [n]=I_1\sqcup I_2\atop
\# I_1\geq 1, \# I_2\geq 1}\pm
\Ba{c}
\begin{xy}
<10mm,0mm>*{\bu},
<10mm,0.8mm>*{};<10mm,5mm>*{}**@{-},
<0mm,-10mm>*{...},
<14mm,-5mm>*{\ldots},
<13mm,-7mm>*{\underbrace{\ \ \ \ \ \ \ \ \ \ \ \ \  }},
<14mm,-10mm>*{_{I_2}};
<10.3mm,0.1mm>*{};<20mm,-5mm>*{}**@{-},
<9.7mm,-0.5mm>*{};<6mm,-5mm>*{}**@{-},
<9.9mm,-0.5mm>*{};<10mm,-5mm>*{}**@{-},
<9.6mm,0.1mm>*{};<0mm,-4.4mm>*{}**@{-},
<0mm,-5mm>*{\bu};
<-5mm,-10mm>*{}**@{-},
<-2.7mm,-10mm>*{}**@{-},
<2.7mm,-10mm>*{}**@{-},
<5mm,-10mm>*{}**@{-},
<0mm,-12mm>*{\underbrace{\ \ \ \ \ \ \ \ \ \ }},
<0mm,-15mm>*{_{I_1}},
\end{xy}
\Ea
$$
\item the generators (\ref{4: generators of Qua}) with $k=0$
are determined by a choice of a minimal resolution, $\cA ss\cB_\infty$, i.e.\ $d$
restricted to these generators equals precisely the differential $\delta$ in $\Assb_\infty$;

\item  generators (\ref{4: generators of Qua}) with $k\geq 1,n\geq 1$
  are determined uniquely by the choice of a minimal resolution $\Assb_\infty$  and the standard  formulae for a $\caL ie_\infty$ morphism.
 \Ei
 Roughly speaking, the value of the differential on a generator (\ref{4: generators of Qua}) with $k\geq 1,n\geq 1$ a sum over all possible attachments of $k$ solid legs to the vertices in the graph
  $\delta \Ba{c}
\resizebox{9mm}{!}{\begin{xy}
 <0mm,0mm>*{\circ};<0mm,0mm>*{}**@{},
 <0mm,0mm>*{};<-8mm,5mm>*{}**@{.},
 <0mm,0mm>*{};<-4.5mm,5mm>*{}**@{.},
 <0mm,0mm>*{};<-1mm,5mm>*{\ldots}**@{},
 <0mm,0mm>*{};<4.5mm,5mm>*{}**@{.},
 <0mm,0mm>*{};<8mm,5mm>*{}**@{.},
   <0mm,0mm>*{};<-10.5mm,5.9mm>*{^{1}}**@{},
   <0mm,0mm>*{};<-4mm,5.9mm>*{^{2}}**@{},
   <0mm,0mm>*{};<10.0mm,5.9mm>*{^{n}}**@{},
 <0mm,0mm>*{};<-8mm,-5mm>*{}**@{.},
 <0mm,0mm>*{};<-4.5mm,-5mm>*{}**@{.},
 <0mm,0mm>*{};<-1mm,-5mm>*{\ldots}**@{},
 <0mm,0mm>*{};<4.5mm,-5mm>*{}**@{.},
 <0mm,0mm>*{};<8mm,-5mm>*{}**@{.},
   <0mm,0mm>*{};<-10.5mm,-6.9mm>*{^{1}}**@{},
   <0mm,0mm>*{};<-4mm,-6.9mm>*{^{2}}**@{},
   <0mm,0mm>*{};<10.0mm,-6.9mm>*{^{m}}**@{},
 \end{xy}}\Ea$ plus standard terms of ``$\caL ie_\infty$" type.
For example, one has in the simplest cases $m=1$, $k=0$, $n\geq 2$,
$$
d
\resizebox{20mm}{!}{\xy
(1,-5)*{\ldots},
(-13,-7)*{_{1}},
(-8,-7)*{_{2}},
(-3,-7)*{_{3}},
(8,-7)*{_{{n-1}}},
(14,-7)*{_{n}},
 (0,0)*{\circ}="a",
(0,5)*{}="0",
(-12,-5)*{}="b_1",
(-8,-5)*{}="b_2",
(-3,-5)*{}="b_3",
(8,-5)*{}="b_4",
(12,-5)*{}="b_5",
\ar @{.} "a";"0" <0pt>
\ar @{.} "a";"b_2" <0pt>
\ar @{.} "a";"b_3" <0pt>
\ar @{.} "a";"b_1" <0pt>
\ar @{.} "a";"b_4" <0pt>
\ar @{.} "a";"b_5" <0pt>
\endxy}
=\sum_{k=0}^{n-2}\sum_{l=2}^{n-k}
(-1)^{k+l(n-k-l)}
\resizebox{25mm}{!}{\begin{xy}
<0mm,0mm>*{\circ},
<0mm,0.8mm>*{};<0mm,5mm>*{}**@{-},
<-9mm,-5mm>*{\ldots},
<14mm,-5mm>*{\ldots},
<-0.7mm,-0.3mm>*{};<-13mm,-5mm>*{}**@{.},
<-0.6mm,-0.5mm>*{};<-6mm,-5mm>*{}**@{.},
<0.6mm,-0.3mm>*{};<20mm,-5mm>*{}**@{.},
<0.3mm,-0.5mm>*{};<8mm,-5mm>*{}**@{.},
<0mm,-0.5mm>*{};<0mm,-4.3mm>*{}**@{.},
<0mm,-5mm>*{\circ};
<-5mm,-10mm>*{}**@{.},
<-2.7mm,-10mm>*{}**@{.},
<2.7mm,-10mm>*{}**@{.},
<5mm,-10mm>*{}**@{.},
<4mm,-7mm>*{^{1\ \  \dots\ \   k\ \ \qquad \ \   k+l+1\dots  \ \ n}},
<2mm,-12mm>*{_{{k+1} \ \dots\ \  k+l}},
\end{xy}}
$$
and  $n=1$, $k=0$, $m\geq 2$,
$$
d\
\resizebox{20mm}{!}{\xy
(1,5)*{\ldots},
(-13,7)*{_{1}},
(-8,7)*{_{2}},
(-3,7)*{_{3}},
(8,7)*{_{{m-1}}},
(14,7)*{_{m}},
 (0,0)*{\circ}="a",
(0,-5)*{}="0",
(-12,5)*{}="b_1",
(-8,5)*{}="b_2",
(-3,5)*{}="b_3",
(8,5)*{}="b_4",
(12,5)*{}="b_5",
\ar @{.} "a";"0" <0pt>
\ar @{.} "a";"b_2" <0pt>
\ar @{.} "a";"b_3" <0pt>
\ar @{.} "a";"b_1" <0pt>
\ar @{.} "a";"b_4" <0pt>
\ar @{.} "a";"b_5" <0pt>
\endxy}
\ =\ \sum_{k=0}^{m-2}\sum_{l=2}^{m-k}
(-1)^{k+l(m-k-l)} \ \
\resizebox{25mm}{!}{\begin{xy}
<0mm,0mm>*{\circ},
<0mm,-0.8mm>*{};<0mm,-5mm>*{}**@{-},
<-9mm,5mm>*{\ldots},
<14mm,5mm>*{\ldots},
<-0.7mm,0.3mm>*{};<-13mm,5mm>*{}**@{.},
<-0.6mm,0.5mm>*{};<-6mm,5mm>*{}**@{.},
<0.6mm,0.3mm>*{};<20mm,5mm>*{}**@{.},
<0.3mm,0.5mm>*{};<8mm,5mm>*{}**@{.},
<0mm,0.5mm>*{};<0mm,4.3mm>*{}**@{.},
<0mm,5mm>*{\circ};
<-5mm,10mm>*{}**@{.},
<-2.7mm,10mm>*{}**@{.},
<2.7mm,10mm>*{}**@{.},
<5mm,10mm>*{}**@{.},
<4mm,7mm>*{^{1\ \  \dots\ \   k\ \ \qquad \ \   k+l+1\dots  \ \ m}},
<2mm,12mm>*{_{{k+1} \ \dots\ \  k+l}},
\end{xy}}
$$
so that the formulae for the values of $d$ on corollas with $m=1$, $k\geq 1$, $n\geq 1$ and,
respectively, with $m\geq 1$, $k\geq 1$, $n= 1$, are given (modulo signs) by

\Beq\label{3: differential on Konts corollas}
d
\Ba{c}
\resizebox{20mm}{!}{\begin{xy}
 <0mm,0mm>*{\circ};
 <0mm,0mm>*{};<0mm,5mm>*{}**@{.},
 <0mm,0mm>*{};<-16mm,-5mm>*{}**@{-},
 <0mm,0mm>*{};<-11mm,-5mm>*{}**@{-},
 <0mm,0mm>*{};<-3.5mm,-5mm>*{}**@{-},
 <0mm,0mm>*{};<-6mm,-5mm>*{...}**@{},
   <0mm,0mm>*{};<-16mm,-8mm>*{^{1}}**@{},
   <0mm,0mm>*{};<-11mm,-8mm>*{^{2}}**@{},
   <0mm,0mm>*{};<-3mm,-8mm>*{^{k}}**@{},
 <0mm,0mm>*{};<16mm,-5mm>*{}**@{.},
 <0mm,0mm>*{};<8mm,-5mm>*{}**@{.},
 <0mm,0mm>*{};<3.5mm,-5mm>*{}**@{.},
 <0mm,0mm>*{};<11.6mm,-5mm>*{...}**@{},
   <0mm,0mm>*{};<17mm,-8mm>*{^{\bar{m}}}**@{},
<0mm,0mm>*{};<10mm,-8mm>*{^{\bar{2}}}**@{},
   <0mm,0mm>*{};<5mm,-8mm>*{^{\bar{1}}}**@{},
 \end{xy}}
\Ea
=
\sum_{B\varsubsetneq [k]\atop
\# B\geq 2}\ \pm
\Ba{c}
\resizebox{25mm}{!}{\begin{xy}
 <0mm,0mm>*{\circ};
 <0mm,0mm>*{};<0mm,5mm>*{}**@{.},
 <0mm,0mm>*{};<-16mm,-5mm>*{}**@{-},
 <0mm,0mm>*{};<-11mm,-5mm>*{}**@{-},
 <0mm,0mm>*{};<-3.5mm,-5mm>*{}**@{-},
 <0mm,0mm>*{};<-6mm,-5mm>*{...}**@{},
 <0mm,0mm>*{};<16mm,-5mm>*{}**@{.},
 <0mm,0mm>*{};<8mm,-5mm>*{}**@{.},
 <0mm,0mm>*{};<3.5mm,-5mm>*{}**@{.},
 <0mm,0mm>*{};<11.6mm,-5mm>*{...}**@{},
   <0mm,0mm>*{};<17mm,-8mm>*{^{\bar{m}}}**@{},
<0mm,0mm>*{};<10mm,-8mm>*{^{\bar{2}}}**@{},
   <0mm,0mm>*{};<5mm,-8mm>*{^{\bar{1}}}**@{},
<-17mm,-12mm>*{\underbrace{\ \ \ \ \ \ \ \ \ \   }},
<-17mm,-14.9mm>*{_B};
<-6mm,-7mm>*{\underbrace{\ \ \ \ \ \ \  }},
<-6mm,-10mm>*{_{[k]\setminus B}};
 (-16.5,-5.5)*{\bu}="a",
(-23,-10)*{}="b_1",
(-20,-10)*{}="b_2",
(-16,-10)*{...}="b_3",
(-12,-10)*{}="b_4",
\ar @{-} "a";"b_2" <0pt>
\ar @{-} "a";"b_1" <0pt>
\ar @{-} "a";"b_4" <0pt>
 \end{xy}}
\Ea
\ \ + \ \
\sum_{k, l, [k]=I_1\sqcup I_2\atop
{|I_1| \geq 0 \atop
|I_2| + l\geq 2}}
\pm
\Ba{c}
\resizebox{27mm}{!}{\begin{xy}
 <0mm,0mm>*{\circ};
 <0mm,0mm>*{};<0mm,6mm>*{}**@{.},
 <0mm,0mm>*{};<-16mm,-6mm>*{}**@{-},
 <0mm,0mm>*{};<-11mm,-6mm>*{}**@{-},
 <0mm,0mm>*{};<-3.5mm,-6mm>*{}**@{-},
 <0mm,0mm>*{};<-6mm,-6mm>*{...}**@{},
<0mm,0mm>*{};<2mm,-9mm>*{^{\bar{1}}}**@{},
<0mm,0mm>*{};<6mm,-9mm>*{^{\bar{k}}}**@{},
<0mm,0mm>*{};<19mm,-9mm>*{^{\overline{k+l+1}}}**@{},
<0mm,0mm>*{};<28mm,-9mm>*{^{\overline{m}}}**@{},
<0mm,0mm>*{};<13mm,-16.6mm>*{^{\overline{k+1}}}**@{},
<0mm,0mm>*{};<20mm,-16.6mm>*{^{\overline{k+l}}}**@{},
 <0mm,0mm>*{};<11mm,-6mm>*{}**@{.},
 <0mm,0mm>*{};<6mm,-6mm>*{}**@{.},
 <0mm,0mm>*{};<2mm,-6mm>*{}**@{.},
 <0mm,0mm>*{};<17mm,-6mm>*{}**@{.},
 <0mm,0mm>*{};<25mm,-6mm>*{}**@{.},
 <0mm,0mm>*{};<4mm,-6mm>*{...}**@{},
<0mm,0mm>*{};<20mm,-6mm>*{...}**@{},
<6.5mm,-16mm>*{\underbrace{\ \ \ \ \   }_{I_2}},
<-10mm,-9mm>*{\underbrace{\ \ \ \ \ \ \ \ \ \ \ \   }_{I_1}},
 %
 (11,-7)*{\circ}="a",
(4,-13)*{}="b_1",
(9,-13)*{}="b_2",
(16,-13)*{...},
(7,-13)*{...},
(13,-13)*{}="b_3",
(19,-13)*{}="b_4",
\ar @{-} "a";"b_2" <0pt>
\ar @{.} "a";"b_3" <0pt>
\ar @{-} "a";"b_1" <0pt>
\ar @{.} "a";"b_4" <0pt>
 \end{xy}}
\Ea
\Eeq

\mip


\Beqr
d
\Ba{c}
\resizebox{16mm}{!}{\begin{xy}
 <0mm,0mm>*{\circ};
 <0mm,0mm>*{};<-4mm,6mm>*{}**@{.},
 <0mm,0mm>*{};<-7mm,6mm>*{}**@{.},
 <0mm,0mm>*{};<4mm,6mm>*{}**@{.},
 <0mm,0mm>*{};<7mm,6mm>*{}**@{.},
 <0mm,0mm>*{};<-16mm,-5mm>*{}**@{-},
 <0mm,0mm>*{};<-11mm,-5mm>*{}**@{-},
 <0mm,0mm>*{};<-3.5mm,-5mm>*{}**@{-},
 <0mm,0mm>*{};<-6mm,-5mm>*{...}**@{},
   <0mm,0mm>*{};<-16mm,-8mm>*{^{1}}**@{},
   <0mm,0mm>*{};<-11mm,-8mm>*{^{2}}**@{},
   <0mm,0mm>*{};<-3mm,-8mm>*{^{k}}**@{},
 <0mm,0mm>*{};<3.5mm,-5mm>*{}**@{.},
 <0mm,5mm>*{...}**@{},
<5mm,-8mm>*{^{\bar{1}}}**@{},
 <-8mm,7mm>*{^{1}}**@{},
 <-4mm,7mm>*{^{2}}**@{},
 <8mm,7mm>*{^{n}}**@{},
 \end{xy}}
\Ea
&=&
\sum_{B\varsubsetneq [k]\atop
\# B\geq 2}\ \pm
\Ba{c}
\resizebox{20mm}{!}{\begin{xy}
 <0mm,0mm>*{\circ};
 <0mm,0mm>*{};<-4mm,6mm>*{}**@{.},
 <0mm,0mm>*{};<-7mm,6mm>*{}**@{.},
 <0mm,0mm>*{};<4mm,6mm>*{}**@{.},
 <0mm,0mm>*{};<7mm,6mm>*{}**@{.},
  <0mm,5mm>*{...}**@{},
 <0mm,0mm>*{};<-16mm,-5mm>*{}**@{-},
 <0mm,0mm>*{};<-11mm,-5mm>*{}**@{-},
 <0mm,0mm>*{};<-3.5mm,-5mm>*{}**@{-},
 <0mm,0mm>*{};<-6mm,-5mm>*{...}**@{},
 <0mm,0mm>*{};<3.5mm,-5mm>*{}**@{.},
   <0mm,0mm>*{};<5mm,-8mm>*{^{\bar{1}}}**@{},
<-17mm,-12mm>*{\underbrace{\ \ \ \ \ \ \ \ \ \   }},
<-17mm,-14.9mm>*{_B};
<-6mm,-7mm>*{\underbrace{\ \ \ \ \ \ \  }},
<-6mm,-10mm>*{_{[k]\setminus B}};
 (-16.5,-5.5)*{\bu}="a",
(-23,-10)*{}="b_1",
(-20,-10)*{}="b_2",
(-16,-10)*{...}="b_3",
(-12,-10)*{}="b_4",
\ar @{-} "a";"b_2" <0pt>
\ar @{-} "a";"b_1" <0pt>
\ar @{-} "a";"b_4" <0pt>
 \end{xy}}
\Ea 
\ \ + \ \ \sum_{k, l, [k]=I_1\sqcup I_2\atop
{|I_1|, |I_2| \geq 1 }}
\pm
\Ba{c}
\resizebox{21mm}{!}{\begin{xy}
 <0mm,0mm>*{\circ};
 <0mm,0mm>*{};<-3mm,6mm>*{}**@{.},
 <0mm,0mm>*{};<-6mm,6mm>*{}**@{.},
 <0mm,0mm>*{};<3mm,6mm>*{}**@{.},
 <0mm,0mm>*{};<6mm,6mm>*{}**@{.},
  <0mm,5mm>*{...}**@{},
 <0mm,0mm>*{};<-16mm,-6mm>*{}**@{-},
 <0mm,0mm>*{};<-11mm,-6mm>*{}**@{-},
 <0mm,0mm>*{};<-3.5mm,-6mm>*{}**@{-},
 <0mm,0mm>*{};<-6mm,-6mm>*{...}**@{},
 <0mm,0mm>*{};<6.4mm,-6mm>*{}**@{.},
<1mm,-16mm>*{\underbrace{\ \ \ \ \  \ \ \ \ \   }_{I_2}},
<-10mm,-9mm>*{\underbrace{\ \ \ \ \ \ \ \ \ \ \ \   }_{I_1}},
<0mm,9.7mm>*{\overbrace{\ \ \ \ \  \ \ \ \ \   }^{J_1}},
<8.6mm,2.7mm>*{\overbrace{\ \ \ \ \  \ \ \   }^{J_2}},
 (7,-7)*{\circ}="a",
(-3,-13)*{}="b_1",
(2,-13)*{}="b_2",
(7.9,-2)*{...},
(0,-13)*{...},
(6,-13)*{}="b_3",
(12,-13)*{}="b_4",
(5,-1)*{}="u1",
(12,-1)*{}="u2",
\ar @{-} "a";"b_2" <0pt>
\ar @{-} "a";"b_3" <0pt>
\ar @{-} "a";"b_1" <0pt>
\ar @{.} "a";"b_4" <0pt>
\ar @{.} "a";"u1" <0pt>
\ar @{.} "a";"u2" <0pt>
 \end{xy}}
\Ea  \nonumber
\Eeqr
We are interested in a few special cases of these formulae for which we give precise expressions,
$$
d\Ba{c}
\xy
(-5,-6)*{_{1}},
(-2,-6)*{_{2}},
(4,-6)*{_{3}},
 (0,0)*{\circ}="a",
(-2,-4)*{}="b1",
(-5,-4)*{}="b2",
 (0,4)*{}="u",
 (3,-4)*{}="b3",

\ar @{.} "a";"u" <0pt>
\ar @{-} "a";"b1" <0pt>
\ar @{-} "a";"b2" <0pt>
\ar @{.} "a";"b3" <0pt>
\endxy\Ea
=
-\Ba{c}
\xy
(-6,-8)*{_{1}},
(0,-8)*{_{2}},
(4,-4.5)*{_{3}},
 (0,0)*{\circ}="a",
(-3,-3)*{\bu}="b1",
(3,-3)*{}="b2",
 (0,3)*{}="u",
 (-6,-6)*{}="d1",
 (0,-6)*{}="d2",
\ar @{.} "a";"u" <0pt>
\ar @{-} "a";"b1" <0pt>
\ar @{.} "a";"b2" <0pt>
\ar @{-} "b1";"d1" <0pt>
\ar @{-} "b1";"d2" <0pt>
\endxy\Ea  +
\Ba{c}\xy
(-4,-4.5)*{_{1}},
(0,-8)*{_{2}},
(7,-8)*{_{3}},
 (0,0)*{\bu}="a",
(-3,-3)*{}="b2",
(3,-3)*{\bu}="b1",
 (0,3)*{}="u",
 (0,-6)*{}="d1",
 (6,-6)*{}="d2",
\ar @{.} "a";"u" <0pt>
\ar @{.} "a";"b1" <0pt>
\ar @{-} "a";"b2" <0pt>
\ar @{-} "b1";"d1" <0pt>
\ar @{.} "b1";"d2" <0pt>
\endxy\Ea
 -
\Ba{c}\xy
(-4,-4.5)*{_{2}},
(0,-8)*{_{1}},
(7,-8)*{_{3}},
 (0,0)*{\bu}="a",
(-3,-3)*{}="b2",
(3,-3)*{\bu}="b1",
 (0,3)*{}="u",
 (0,-6)*{}="d1",
 (6,-6)*{}="d2",
\ar @{.} "a";"u" <0pt>
\ar @{.} "a";"b1" <0pt>
\ar @{-} "a";"b2" <0pt>
\ar @{-} "b1";"d1" <0pt>
\ar @{.} "b1";"d2" <0pt>
\endxy\Ea,
$$
$$
d\Ba{c}
\xy
(5,-6)*{_{3}},
(2,-6)*{_{2}},
(-4,-6)*{_{1}},
 (0,0)*{\circ}="a",
(2,-4)*{}="b1",
(5,-4)*{}="b2",
 (0,4)*{}="u",
 (-3,-4)*{}="b3",

\ar @{.} "a";"u" <0pt>
\ar @{.} "a";"b1" <0pt>
\ar @{.} "a";"b2" <0pt>
\ar @{-} "a";"b3" <0pt>
\endxy\Ea
=
\Ba{c}\xy
(-4,-4.5)*{_{1}},
(0,-8)*{_{2}},
(7,-8)*{_{3}},
 (0,0)*{\bu}="a",
(-3,-3)*{}="b2",
(3,-3)*{\circ}="b1",
 (0,3)*{}="u",
 (0,-6)*{}="d1",
 (6,-6)*{}="d2",
\ar @{.} "a";"u" <0pt>
\ar @{.} "a";"b1" <0pt>
\ar @{-} "a";"b2" <0pt>
\ar @{.} "b1";"d1" <0pt>
\ar @{.} "b1";"d2" <0pt>
\endxy\Ea   -
\Ba{c}\xy
(-6,-8)*{_{1}},
(0,-8)*{_{2}},
(4,-4.5)*{_{3}},
 (0,0)*{\circ}="a",
(-3,-3)*{\bu}="b1",
(3,-3)*{}="b2",
 (0,3)*{}="u",
 (-6,-6)*{}="d1",
 (0,-6)*{}="d2",
\ar @{.} "a";"u" <0pt>
\ar @{.} "a";"b1" <0pt>
\ar @{.} "a";"b2" <0pt>
\ar @{-} "b1";"d1" <0pt>
\ar @{.} "b1";"d2" <0pt>
\endxy\Ea
  -
\Ba{c}\xy
(-4,-4.5)*{_{2}},
(0,-8)*{_{1}},
(7,-8)*{_{3}},
 (0,0)*{\circ}="a",
(-3,-3)*{}="b2",
(3,-3)*{\bu}="b1",
 (0,3)*{}="u",
 (0,-6)*{}="d1",
 (6,-6)*{}="d2",
\ar @{.} "a";"u" <0pt>
\ar @{.} "a";"b1" <0pt>
\ar @{.} "a";"b2" <0pt>
\ar @{-} "b1";"d1" <0pt>
\ar @{.} "b1";"d2" <0pt>
\endxy\Ea,
$$
$$
d\Ba{c}
\xy
(-3,6)*{_{1}},
(3,6)*{_{2}},
 (0,0)*{\circ}="a",
(-3,-4)*{}="b1",
(3,-4)*{}="b2",
 (-3,4)*{}="u",
 (3,4)*{}="b3",

\ar @{.} "a";"u" <0pt>
\ar @{-} "a";"b1" <0pt>
\ar @{.} "a";"b2" <0pt>
\ar @{.} "a";"b3" <0pt>
\endxy\Ea
=
\Ba{c}
\xy
(-3,9)*{_{1}},
(3,9)*{_{2}},
 (0,0)*{\bu}="a",
(-3,-3)*{}="b2",
(3,-3)*{}="b1",
 (0,4)*{\circ}="u",
 (-3,7)*{}="u1",
 (3,7)*{}="u2",
\ar @{.} "a";"u" <0pt>
\ar @{.} "a";"b1" <0pt>
\ar @{-} "a";"b2" <0pt>
\ar @{.} "u";"u1" <0pt>
\ar @{.} "u";"u2" <0pt>
\endxy\Ea
  -
\Ba{c}\xy
(-3,9)*{_{1}},
(3,6)*{_{2}},
 (0,0)*{\circ}="a",
(-3,4)*{\bu}="b1",
(3,4)*{}="b2",
 (0,-3)*{}="d",
 (-3,7)*{}="u",
 (-6,0)*{}="l",
\ar @{.} "a";"d" <0pt>
\ar @{.} "a";"b1" <0pt>
\ar @{.} "a";"b2" <0pt>
\ar @{-} "b1";"l" <0pt>
\ar @{.} "b1";"u" <0pt>
\endxy\Ea
 -
\Ba{c}\xy
(-3,6)*{_{1}},
(3,9)*{_{2}},
 (0,0)*{\circ}="a",
(3,4)*{\bu}="b1",
(-3,4)*{}="b2",
 (0,-3)*{}="d",
 (3,7)*{}="u",
 (6,0)*{}="l",
\ar @{.} "a";"d" <0pt>
\ar @{.} "a";"b1" <0pt>
\ar @{.} "a";"b2" <0pt>
\ar @{-} "b1";"l" <0pt>
\ar @{.} "b1";"u" <0pt>
\endxy\Ea.
$$
These equations say that there exists a canonical morphism of dg props,
\Beq\label{4: p from Qua indty to Qua}
p: \cQ ua_\infty \lon \cQ ua
\Eeq
which sends to zero all generators of $\cQ ua_\infty$ except  $\Ba{c}\xy
 <0mm,0.55mm>*{};<0mm,3.5mm>*{}**@{-},
 <0.5mm,-0.5mm>*{};<2.2mm,-2.2mm>*{}**@{-},
 <-0.48mm,-0.48mm>*{};<-2.2mm,-2.2mm>*{}**@{-},
 <0mm,0mm>*{\bu};<0mm,0mm>*{}**@{},
 <0.5mm,-0.5mm>*{};<2.7mm,-3.2mm>*{_2}**@{},
 <-0.48mm,-0.48mm>*{};<-2.7mm,-3.2mm>*{_1}**@{},
 \endxy\Ea$,
 $\Ba{c}\begin{xy}
 <0mm,0.66mm>*{};<0mm,3mm>*{}**@{.},
 <0.39mm,-0.39mm>*{};<2.2mm,-2.2mm>*{}**@{.},
 <-0.35mm,-0.35mm>*{};<-2.2mm,-2.2mm>*{}**@{-},
 <0mm,0mm>*{\bu};<0mm,0mm>*{}**@{},
   <0.39mm,-0.39mm>*{};<2.9mm,-4mm>*{^2}**@{},
   <-0.35mm,-0.35mm>*{};<-2.8mm,-4mm>*{^1}**@{},
\end{xy}\Ea$,
 $\Ba{c}\begin{xy}
 <0mm,-0.55mm>*{};<0mm,-3.5mm>*{}**@{.},
 <0.5mm,0.5mm>*{};<2.2mm,2.2mm>*{}**@{.},
 <-0.48mm,0.48mm>*{};<-2.2mm,2.2mm>*{}**@{.},
 <0mm,0mm>*{\circ};<0mm,0mm>*{}**@{},
 <0.5mm,0.5mm>*{};<2.7mm,2.8mm>*{^2}**@{},
 <-0.48mm,0.48mm>*{};<-2.7mm,2.8mm>*{^1}**@{},
 \end{xy}\Ea$ and $\Ba{c}\begin{xy}
 <0mm,0.66mm>*{};<0mm,4mm>*{}**@{.},
 <0.39mm,-0.39mm>*{};<2.2mm,-2.2mm>*{}**@{.},
 <-0.35mm,-0.35mm>*{};<-2.2mm,-2.2mm>*{}**@{.},
 <0mm,0mm>*{\circ};<0mm,0mm>*{}**@{},
   <0.39mm,-0.39mm>*{};<2.9mm,-4mm>*{^2}**@{},
   <-0.35mm,-0.35mm>*{};<-2.8mm,-4mm>*{^1}**@{},
\end{xy}\Ea$.

\begin{theorem}
The morphism $p$ is a quasi-isomorphism.
\end{theorem}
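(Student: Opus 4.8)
The plan is to deduce that $p$ is a quasi-isomorphism from the two known resolution statements $\caH olie_3\xrightarrow{\sim}\caL ie_3$ and $\Assb_\infty\xrightarrow{\sim}\Assb$ (the latter being the minimal resolution established in \cite{MV}), by organising $\cQ ua_\infty$ and $\cQ ua$ according to colours. Both differentials preserve the number of connected components of a graph (each term of $d$ either splits a vertex or applies a bracket, leaving the graph connected), so the connected parts are subcomplexes and, by the K\"unneth theorem over a field of characteristic zero, it suffices to prove the claim on connected graphs. A connected graph cannot carry both a solid ($\fg$-coloured) output and a dashed ($A$-coloured) leg: a solid output can only be an unattached $\caH olie_3$-root, and in a connected graph meeting a dashed vertex every such root is consumed. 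Hence connected graphs fall into three $d$- and $p$-stable classes --- solid-only, dashed-only, and \emph{mixed} (solid inputs, dashed inputs and outputs). On solid-only graphs $p$ is the canonical map $\caH olie_3\to\caL ie_3$, and on dashed-only graphs it is $\Assb_\infty\to\Assb$; both are quasi-isomorphisms and, the passage to the enveloping prop being exact, remain so as props. This reduces everything to the mixed class.

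For the mixed class, fix the arity $(k;m,n)$ of a connected mixed graph, with $k\ge 1$ solid inputs; since $d$ preserves $k,m,n$ and, in fixed arity, the homological degree bounds the number of internal vertices, each such complex is finite-dimensional in every degree, so the filtrations below are bounded and the associated spectral sequences converge. Such a graph is a solid $\caH olie_3$-forest whose $k$ leaves are the external solid legs and whose roots decorate solid inputs of a dashed backbone governed by $\Assb_\infty$. I would filter by the number of dashed vertices. Splitting a dashed vertex (the second sum in \eqref{3: differential on Konts corollas}) raises this number, whereas the internal $\caH olie_3$-splittings and the bracket-attachment terms (the first sum, acting on the solid legs) preserve it. Thus the $E_0$-differential is exactly the $\caH olie_3$-differential on the solid forest attached to a frozen dashed backbone, and the computation underlying $\caH olie_3\xrightarrow{\sim}\caL ie_3$ identifies $E_1$ with the mixed part of the intermediate prop in which the solid colour carries the \emph{strict} $\caL ie_3$-structure while the dashed backbone is still $\Assb_\infty$, with induced differential $d_1=\delta$.

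A second filtration of $E_1$ by the number of solid vertices now isolates the $\Assb_\infty$-differential as its leading term; reducing via $\Assb_\infty\xrightarrow{\sim}\Assb$ collapses the dashed backbone to a strict bialgebra and the morphism data to the single binary solid-on-dashed operation, the surviving relations being exactly the Jacobi relation \eqref{4: Jacobi relation for Lie}, the bialgebra relations \eqref{2: bialgebra relations} and the compatibilities \eqref{3: Gac prop rel 1}--\eqref{3: Gac prop rel 2}. This identifies the limit page with the mixed part of $\cQ ua$. As $p$ is filtered and $\cQ ua$ carries the zero differential, the comparison theorem for these convergent spectral sequences shows that $p_\ast$ realises the identification, so $p$ is a quasi-isomorphism on the mixed class and hence everywhere.

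The delicate point is entirely in the mixed class. Because the roots of the solid forest are grafted onto the dashed $\Assb_\infty$-vertices, the mixed complex is \emph{not} a tensor product of a solid and a dashed complex, and the crux is to verify that the associated-graded differential genuinely decouples: that the bracket-attachment terms sit in $d_0$ and not in a later differential, and that splitting a dashed vertex transports the solid subtrees attached to it as inert decorations, so that the $\caH olie_3$- and $\Assb_\infty$-reductions may be performed sequentially rather than simultaneously. One must also check that after both reductions the $\caL ie_\infty$-morphism datum contributes no surplus cohomology beyond the strict biderivation action recorded by \eqref{3: Gac prop rel 1}--\eqref{3: Gac prop rel 2}; this is where the exact matching between the generators \eqref{4: generators of Qua} and the presentation of $\cQ ua$ is needed. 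The remaining convergence bookkeeping is routine but must be recorded to license the spectral-sequence comparison.
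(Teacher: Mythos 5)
Your overall strategy (split into connected pieces, then peel off the $\caH olie_3$-resolution and the $\Assb_\infty$-resolution by successive filtrations) is in the right spirit, but it has a gap at exactly the point you label ``routine convergence bookkeeping,'' and that point is in fact the crux of the theorem.

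The claim that in fixed arity the homological degree bounds the number of internal vertices is false. The only generators of $\cQ ua_\infty$ in degree zero are the binary dashed corollas (the $\Assb_\infty$ generators with $m+n=3$, $k=0$), and these can be stacked without bound in a graph of fixed external arity and fixed (zero) total degree: composing a comultiplication with a multiplication produces a genus-one graph of arity $(1,1)$ and degree $0$, and iterating gives infinitely many linearly independent degree-zero elements of $\cQ ua_\infty(1,1)$ with arbitrarily many dashed vertices. Consequently your filtration by the number of dashed vertices is unbounded in each bidegree, the complexes are not degreewise finite-dimensional, and the spectral-sequence comparison you invoke is not licensed. This is not a technicality: $\Assb$ is not quadratic as a prop(erad) (the compatibility relation between $\mu$ and $\Delta$ has genus one), and the convergence problem created by the proliferation of degree-zero binary vertices is precisely what the paper's proof is built to circumvent. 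The paper first filters by \emph{genus} and invokes the convergence theorem of Markl--Voronov (Theorem 27 of \cite{MaVo}, which uses that the differential is connected and preserves the path grading), then filters the associated graded by the \emph{path grading}, reducing everything to the prop enveloping of a genuinely quadratic $\frac{1}{2}$-prop. Any correct proof along your lines would have to insert these (or equivalent) filtrations before your vertex-counting ones.

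A second, smaller issue: on the $E_0$ page of your first filtration the differential on the mixed class is not merely ``the $\caH olie_3$-differential on a solid forest.'' It also contains the gathering terms that absorb subsets of the solid inputs of a dashed vertex into a new black vertex (the first sum in (\ref{3: differential on Konts corollas})). Computing the cohomology of that complex is not the acyclicity of $\caH olie_3\to\caL ie_3$; it is the Koszulness of the two-coloured operad of Leibniz pairs (non-commutative Gerstenhaber algebras), which is a separate theorem that the paper imports from \cite{A,HL}. Your proposal needs to name and use this input rather than fold it into the $\caH olie_3$ computation.
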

\begin{proof} The differential in the complex $(\cQ ua_\infty,d)$ is connected (that is, its value on any generator is given by a sum of connected graphs) and preserves the path grading\footnote{To any graph  $\Ga\in \cP(m,n)$ of a free prop generated by by an $\bS$-bimodule $E=\{E(m,n)\}_{m+n\geq3, m,n\geq 1}$ we can associate the number $|\Ga|_{path}\leq mn$ of directed paths connecting input legs of $\Ga$
with output ones.}. Hence by Theorem 27 in \cite{MaVo}, the genus filtration
$$
F_p^{genus}:=\left\{ \mathrm{span}\langle \Ga\in \cQ ua_\infty(m,n) \rangle\ :\ \text{the genus of}\ \Ga>-p\right\}
$$
induces a {\em converging}\, spectral sequence whose first term we denote by $Gr^{gen}(\cQ ua_\infty)$. The projection map $p$ respects genus filtrations of both sides, and hence induces a morphism of complexes
$$
p': Gr^{gen}(\cQ ua_\infty) \lon Gr^{gen}(\cQ ua).
$$
The complex $Gr^{gen}(\cQ ua_\infty)$ admits a path filtration
$$
F_p^{path}:=\left\{ \mathrm{span}\langle \Ga\in Gr^{gen}(\cQ ua_\infty)(m,n) \rangle\ : \ \ |\Ga|_{path}<p\right\}
$$
with the converging spectral sequence whose first term we denote by $Gr^{gen, path}(\cQ ua_\infty)$.
The map $p'$ above preserves path filtrations of both sides and hence induces a morphism
of the associated graded complexes,
$$
p'': Gr^{gen,path}(\cQ ua_\infty) \lon Gr^{gen,path}(\cQ ua)
$$
The r.h.s.\ equals the prop enveloping of a quadratic  $\frac{1}{2}$-prop $\cP_0$ generated by
 \Bi
 \item[(i)]
 the {\em Koszul}\, operad $\cA ss$ with generators  $\begin{xy}
 <0mm,-0.55mm>*{};<0mm,-3.5mm>*{}**@{.},
 <0.5mm,0.5mm>*{};<2.2mm,2.2mm>*{}**@{.},
 <-0.48mm,0.48mm>*{};<-2.2mm,2.2mm>*{}**@{.},
 <0mm,0mm>*{\circ};<0mm,0mm>*{}**@{},
 <0.5mm,0.5mm>*{};<2.7mm,2.8mm>*{^2}**@{},
 <-0.48mm,0.48mm>*{};<-2.7mm,2.8mm>*{^1}**@{},
 \end{xy}$,
 \item[(ii)] the 2-coloured {\em Koszul}\, \cite{A,HL} operad of non-commutative Gerstenhaber algebras (also known as operad
 of Leibniz pairs) with generators  $\Ba{c}\xy
 <0mm,0.55mm>*{};<0mm,3.5mm>*{}**@{-},
 <0.5mm,-0.5mm>*{};<2.2mm,-2.2mm>*{}**@{-},
 <-0.48mm,-0.48mm>*{};<-2.2mm,-2.2mm>*{}**@{-},
 <0mm,0mm>*{\bu};<0mm,0mm>*{}**@{},
 <0.5mm,-0.5mm>*{};<2.7mm,-3.2mm>*{_2}**@{},
 <-0.48mm,-0.48mm>*{};<-2.7mm,-3.2mm>*{_1}**@{},
 \endxy\Ea$, $\Ba{c}\begin{xy}
 <0mm,0.66mm>*{};<0mm,3mm>*{}**@{.},
 <0.39mm,-0.39mm>*{};<2.2mm,-2.2mm>*{}**@{.},
 <-0.35mm,-0.35mm>*{};<-2.2mm,-2.2mm>*{}**@{-},
 <0mm,0mm>*{\bu};<0mm,0mm>*{}**@{},
   <0.39mm,-0.39mm>*{};<2.9mm,-4mm>*{^2}**@{},
   <-0.35mm,-0.35mm>*{};<-2.8mm,-4mm>*{^1}**@{},
\end{xy}\Ea$,
 and $\Ba{c}\begin{xy}
 <0mm,0.66mm>*{};<0mm,4mm>*{}**@{.},
 <0.39mm,-0.39mm>*{};<2.2mm,-2.2mm>*{}**@{.},
 <-0.35mm,-0.35mm>*{};<-2.2mm,-2.2mm>*{}**@{.},
 <0mm,0mm>*{\circ};<0mm,0mm>*{}**@{},
   <0.39mm,-0.39mm>*{};<2.9mm,-4mm>*{^2}**@{},
   <-0.35mm,-0.35mm>*{};<-2.8mm,-4mm>*{^1}**@{},
\end{xy}\Ea$
\Ei
modulo $\frac{1}{2}$-prop relations
$$
\Ba{c} \begin{xy}
 <0mm,2.47mm>*{};<0mm,-0.5mm>*{}**@{.},
 <0.5mm,3.5mm>*{};<2.2mm,5.2mm>*{}**@{.},
 <-0.48mm,3.48mm>*{};<-2.2mm,5.2mm>*{}**@{.},
 <0mm,3mm>*{\circ};<0mm,3mm>*{}**@{},
  <0mm,-0.8mm>*{\circ};<0mm,-0.8mm>*{}**@{},
<0mm,-0.8mm>*{};<-2.2mm,-3.5mm>*{}**@{.},
 <0mm,-0.8mm>*{};<2.2mm,-3.5mm>*{}**@{.},
     <0.5mm,3.5mm>*{};<2.8mm,5.7mm>*{^2}**@{},
     <-0.48mm,3.48mm>*{};<-2.8mm,5.7mm>*{^1}**@{},
   <0mm,-0.8mm>*{};<-2.7mm,-5.2mm>*{^1}**@{},
   <0mm,-0.8mm>*{};<2.7mm,-5.2mm>*{^2}**@{},
\end{xy}\Ea=0 \ \ \ \ \ \mbox{and} \ \ \ \ \  \Ba{c}
\xy
(-3,9)*{_{1}},
(3,9)*{_{2}},
 (0,0)*{\bu}="a",
(-3,-3)*{}="b2",
(3,-3)*{}="b1",
 (0,4)*{\circ}="u",
 (-3,7)*{}="u1",
 (3,7)*{}="u2",
\ar @{.} "a";"u" <0pt>
\ar @{.} "a";"b1" <0pt>
\ar @{-} "a";"b2" <0pt>
\ar @{.} "u";"u1" <0pt>
\ar @{.} "u";"u2" <0pt>
\endxy\Ea=0.
$$
It is easy to compute the minimal resolution of the $\frac{1}{2}$-prop $\cP_0$ and notice that its prop
enveloping equals precisely  $Gr^{gen,path}(\cQ ua_\infty)$. Hence the morphism $p''$ is a quasi-isomorphism which implies that the morphism $p'$ is a quasi-isomorphism which in tern implies
that the morphism $p$ is a quasi-isomorphism.
\end{proof}

\subsection{Formality morphisms as Maurer-Cartan elements}\label{4.6 subsect on formalities as MC elements} The morphism (\ref{3: i_1: Lie to Bra})
factors through the inclusion $\cG ra^{or}\rar \cB\cG ra^{or}$, and extends  to the following one.

\subsubsection{\bf Proposition}\label{3: Prop on map from AssB to Bra}
{\em There is a morphism of props
\Beq\label{3: i_Gac to Bra}
i: \cQ ua \lon \cB ra^{or}
\Eeq
given on the generators as follows,}
$$
i\left(\begin{xy}
 <0mm,-0.55mm>*{};<0mm,-3.5mm>*{}**@{.},
 <0.5mm,0.5mm>*{};<2.2mm,2.2mm>*{}**@{.},
 <-0.48mm,0.48mm>*{};<-2.2mm,2.2mm>*{}**@{.},
 <0mm,0mm>*{\circ};<0mm,0mm>*{}**@{},
 <0.5mm,0.5mm>*{};<2.7mm,2.8mm>*{^2}**@{},
 <-0.48mm,0.48mm>*{};<-2.7mm,2.8mm>*{^1}**@{},
 \end{xy}\right)=
 \Ba{c}\resizebox{8mm}{!}{ \xy
(-3,9)*{^1},
(3,9)*{^2},
 (0,2)*{\circ}="a",
(-3,7)*{\circ}="b_1",
(3,7)*{\circ}="b_2",
 \endxy}\Ea  \ \ \ \ \ \ \ \ \ \ , \ \ \ \ \ \ \ \ \ \ \
i\left(\begin{xy}
 <0mm,0.66mm>*{};<0mm,4mm>*{}**@{.},
 <0.39mm,-0.39mm>*{};<2.2mm,-2.2mm>*{}**@{.},
 <-0.35mm,-0.35mm>*{};<-2.2mm,-2.2mm>*{}**@{.},
 <0mm,0mm>*{\circ};<0mm,0mm>*{}**@{},
   <0.39mm,-0.39mm>*{};<2.9mm,-4mm>*{^2}**@{},
   <-0.35mm,-0.35mm>*{};<-2.8mm,-4mm>*{^1}**@{},
\end{xy}\right)=\Ba{c}\resizebox{8mm}{!}{ \xy
(-3,0)*{_1},
(3,0)*{_2},
 (0,7)*{\circ}="a",
(-3,2)*{\circ}="b_1",
(3,2)*{\circ}="b_2",
 \endxy}\Ea
$$
$$
i\left(\Ba{c}
\xy
 <0mm,0.55mm>*{};<0mm,3.5mm>*{}**@{-},
 <0.5mm,-0.5mm>*{};<2.2mm,-2.2mm>*{}**@{-},
 <-0.48mm,-0.48mm>*{};<-2.2mm,-2.2mm>*{}**@{-},
 <0mm,0mm>*{\bu};<0mm,0mm>*{}**@{},
 <0.5mm,-0.5mm>*{};<2.7mm,-3.2mm>*{_2}**@{},
 <-0.48mm,-0.48mm>*{};<-2.7mm,-3.2mm>*{_1}**@{},
 \endxy\Ea\right)\ =\ \  \xy
(0,2)*{_{1}},
(7,2)*{_{2}},
 (0,0)*{\bullet}="a",
(7,0)*{\bu}="b",
\ar @{->} "a";"b" <0pt>
\endxy\
- \
\xy
(0,2)*{_{2}},
(7,2)*{_{1}},
 (0,0)*{\bullet}="a",
(7,0)*{\bu}="b",
\ar @{->} "a";"b" <0pt>
\endxy\ \ \ \ \ \  ,\ \ \ \ \ \ \
i\left(\begin{xy}
 <0mm,0.66mm>*{};<0mm,3mm>*{}**@{.},
 <0.39mm,-0.39mm>*{};<2.2mm,-2.2mm>*{}**@{.},
 <-0.35mm,-0.35mm>*{};<-2.2mm,-2.2mm>*{}**@{-},
 <0mm,0mm>*{\bu};<0mm,0mm>*{}**@{},
\end{xy}\right)\ =\ \ \xy
 (0,0)*{\bullet}="a",
(0,5)*{\circ}="u",
(0,-5)*{\circ}="d",
\ar @{->} "d";"a" <0pt>
\ar @{->} "a";"u" <0pt>
\endxy \ .
$$

\begin{proof} Relation (\ref{3: Gac prop rel 1}) follows from example (\ref{3: Example of prop coomp}) and the fact that
$$
\Ba{rccc}
\circ: &\cP\cG ra(1;1,1)\ot \cP\cG ra(1;1,1)  &\lon & \cP\cG ra(2;1,1)
\\
& \Ba{c}\xy
 (0,0)*{\bullet}="a",
(0,5)*{\circ}="u",
(0,-5)*{\circ}="d",
\ar @{->} "d";"a" <0pt>
\ar @{->} "a";"u" <0pt>
\endxy\Ea \ot
\Ba{c}\xy
 (0,0)*{\bullet}="a",
(0,5)*{\circ}="u",
(0,-5)*{\circ}="d",
\ar @{->} "d";"a" <0pt>
\ar @{->} "a";"u" <0pt>
\endxy\Ea
 &\lon &  \Ba{c}\xy
(-2,0)*{_{2}},
(-2,5)*{_{1}},
 (0,0)*{\bullet}="a",
(0,5)*{\bu}="b",
(0,10)*{\circ}="u",
(0,-5)*{\circ}="d",
\ar @{->} "d";"a" <0pt>
\ar @{->} "a";"b" <0pt>
\ar @{->} "b";"u" <0pt>
\endxy
\Ea  + \Ba{c}\xy
(-4.8,1)*{_{1}},
(4.8,1)*{_{2}},
 (-3,0)*{\bullet}="a",
 (3,0)*{\bullet}="b",
(0,5)*{\circ}="u",
(0,-5)*{\circ}="d",
\ar @{->} "d";"a" <0pt>
\ar @{->} "a";"u" <0pt>
\ar @{->} "d";"b" <0pt>
\ar @{->} "b";"u" <0pt>
\endxy\Ea
\Ea
$$

Analogously one checks that all other relations (\ref{2: bialgebra relations}) and (\ref{3: Gac prop rel 2}) are also respected by the map $i$.
\end{proof}

The composition of maps $p: \cQ ua_\infty \rar \cQ ua$ and $i$ from  (\ref{3: i_Gac to Bra}) gives us a canonical morphism of props,
$$
q_0: \cQ ua_\infty\lon \cB ra^{or},
$$
so that we can consider a graph complex (in fact, a $\caL ie_\infty$ algebra)\footnote{
The  symbol $\oplus$ means here a direct sum of graded vector spaces, {\em not}\,
 a direct sum  of $\caL ie_\infty$-algebras. It is worth noting, however, that each summand is a $\caL ie_\infty$
 subalgebra (with $\mathsf{fGC_3^{or}}$ being an ordinary dg Lie algebra). Moreover, we can (and will) assume without loss of generality that $\mathsf{BGC}^{or}$ is spanned by graphs with at least trivalent vertices (cf.\ \cite{Wi}).}
$$
\Def(\cQ ua_\infty \stackrel{q_0}{\rar} \cB ra)=: \mathsf{fGC}_3^{or} \oplus \mathsf{BGC}^{or}
$$
controlling deformations of the morphism $q_0$. Elements of $\mathsf{BGC}$ can be understood as (linear combinations of) oriented directed graphs $\Ga$ from $\sG_{k;m,n}$ which have labels of {\em black}\, vertices skew-symmetrized (so that labelling of {\em black}\, vertices can be omitted in the pictures), and which are assigned the homological degree
\Beqr
|\Ga|&=&3|V_{int}(\Ga)| + |V_{in}(\Ga)|+ |V_{out}(\Ga)|  -2|E_{int}(\Ga)| - |E_{in}(\Ga)|- |E_{out}(\Ga)| -2\nonumber\\
&=& \sum_{i=1}^{|V_{int}(\Ga)|} (3-|v_i|) + |V_{in}(\Ga)|+ |V_{out}(\Ga)| -2.\label{3: degree in BC}
\Eeqr
where $|v_i|$ stands for the valency (the total number of input and output half-edges)
of the $i$-th internal vertex.

%

\subsection{Universal formality maps}
A generic Maurer-Cartan element in the $\caL ie_\infty$ algebra
$\Def(\cQ ua_\infty \stackrel{q_0}{\rar} \cB ra^{or})$ (or its oriented version)
is a direct
sum
$$
(q', q) \in  \mathsf{fGC}_3^{or} \oplus \mathsf{BGC}^{or},
$$
where $q'$ is responsible for the deformation of the map $i_1$ in (\ref{3: i_1: Lie to Bra}).
In this paper we are interested in  deformations which keeps $i_0$ fixed (speaking plainly,
this means that we want to preserve the Poisson brackets $\{\ ,\ \}$ in $\fg_V$ which
define strongly homotopy bialgebras).  This leads us to the following notion.

\subsubsection{\bf Definitions} A Maurer-Cartan element of the $\caL ie_\infty$
algebra $\Def(\cQ ua_\infty \stackrel{q_0}{\rar} \cB ra^{or})$  of the form $(0,q)$, that is,  a  generic Maurer-Cartan element $q$
 of its $\caL ie_\infty$ subalgebra $\mathsf{BGC}^{or}$  is called a
 {\em universal quantization}\, of arbitrary (possibly, infinite-dimensional) strongly homotopy Lie bialgebras. A universal quantization $q\in \cM\cC( \mathsf{BGC}^{or})$
 is a called
 a {\em universal formality map}\, if the associated
 morphism of dg props,
$$
F: \cQ ua_\infty \stackrel{q_0+q}{\lon} \cB ra^{or},
$$
 satisfies the following condition,
 \Beq\label{4: universal formality condition}
 F\left(\resizebox{13mm}{!}{ \xy
 (0,7)*{\overbrace{\ \ \ \  \ \ \ \ \ \ \ \ \ \ }},
 (0,9)*{^m},
 (0,3)*{^{...}},
 (5.6,-5.5)*{^{...}},
 (6,-7)*{\underbrace{  \ \ \ \ \ \ }},
 (6,-9)*{_n},
 (0,0)*{\circ}="0",
(-7,5)*{}="u_1",
(-4,5)*{}="u_2",
(4,5)*{}="u_3",
(7,5)*{}="u_4",
(2,-5)*{}="d_1",
(3.6,-5)*{}="d_2",
(9,-5)*{}="d_3",
(-4,-5)*{}="s_1",
\ar @{.} "0";"u_1" <0pt>
\ar @{.} "0";"u_2" <0pt>
\ar @{.} "0";"u_3" <0pt>
\ar @{.} "0";"u_4" <0pt>
\ar @{.} "0";"d_1" <0pt>
\ar @{.} "0";"d_2" <0pt>
\ar @{.} "0";"d_3" <0pt>
\ar @{-} "0";"s_1" <0pt>
\endxy}\right)=\frac{1}{m!n!}\Ba{c}\resizebox{16mm}{!}{\xy
(0,7.5)*{\overbrace{\ \ \ \ \ \ \ \ \  \ \ \ \ \ \ \ \ \ \ }},
 (0,9.5)*{^m},
 (0,-7.5)*{\underbrace{\ \ \ \ \ \ \ \ \ \ \ \  \ \ \ \ \ \ }},
 (0,-9.9)*{_n},
  (-6,5)*{...},
  (-6,-5)*{...},
 (-3,5)*{\circ}="u1",
  (-3,-5)*{\circ}="d1",
  (-6,5)*{...},
  (-6,-5)*{...},
  (-9,5)*{\circ}="u2",
  (-9,-5)*{\circ}="d2",
 (3,5)*{\circ}="u3",
  (3,-5)*{\circ}="d3",
  (6,5)*{...},
  (6,-5)*{...},
  (9,5)*{\circ}="u4",
  (9,-5)*{\circ}="d4",
 (0,0)*{\bullet}="a",
\ar @{->} "d1";"a" <0pt>
\ar @{->} "a";"u1" <0pt>
\ar @{->} "d2";"a" <0pt>
\ar @{->} "a";"u2" <0pt>
\ar @{->} "d3";"a" <0pt>
\ar @{->} "a";"u3" <0pt>
\ar @{->} "d4";"a" <0pt>
\ar @{->} "a";"u4" <0pt>
\endxy}\Ea
 \Eeq
where
the graph in the r.h.s.\ is assumed to be skew-symmetrized over the labels of in- and out-vertices.

\sip

The set of universal formality maps is denoted by $\cM\cC_{\mathrm{form}}$.

\subsection{On $GRT_1$ action}
The $\caL ie_\infty$ algebra structure
$$
\left\{\mu_n: \wedge^n \Def(\cQ ua_\infty \stackrel{q_0}{\rar} \cB ra^{or}) \rar \Def(\cQ ua_\infty \stackrel{q_0}{\rar} \cP\cG ra^{or})[2-n]\right\}_{n\geq 1}
$$
 satisfies the condition that the maps $\mu_n$ restricted to the subspace
 $\mathsf{fGC_3^{or}}\ot \wedge^{n-1} \Def(\cQ ua_\infty \stackrel{q_0}{\rar} \cP\cG ra^{or})$ vanish for $n\geq 3$. This means that the map
$$
\Ba{rccc}
Ad: &  \mathsf{fGC_3^{or}}\ot  \mathsf{BGC^{or}} &\lon & \mathsf{BGC^{or}}\\
    &        \ga\ot \Ga  & \lon & ad_\ga\Ga:= \mu_2(\ga, \Ga)
    \Ea
$$
gives us a injective morphism of Lie algebras,
\Beq\label{4: action of Lie alg GC_3}
\Ba{ccc}
Z\mathsf{fGC_3^{or}} & \lon & \mathrm{Der}(\mathsf{BGC^{or}})\\
      \ga           & \lon & ad_\ga
\Ea
\Eeq
where $Z\mathsf{fGC_3^{or}}\subset \mathsf{fGC_3^{or}}$ is the graded Lie subalgebra of co-cycles,
and  $\mathrm{Der}(\mathsf{BGC^{or}})$ is the Lie algebra of derivations
 of the
$\caL ie_\infty$  algebra $\mathsf{BGC^{or}}$.
Hence we also have a canonical map
$$
\fg\fr\ft_1 \lon H^0(\mathrm{Der}(\mathsf{BGC^{or}}))
$$
which implies the following

\subsubsection{\bf Proposition}\label{6: Claim on action of GRT on univ quant} {\em  The prounipotent Grothendieck-Teichm\"uller group  $GRT_1=\exp(\fg\fr\ft)$ acts  on the set of gauge
equivalence classes, $\cM\cC(\mathsf{BGC^{or}})/\sim$, of Maurer-Cartan elements in the $\caL ie_\infty$
algebra $\mathsf{BGC^{or}}$. Moreover, this action decsends to the action on
the subset $\cM\cC_{\mathrm{form}}\subset\cM\cC(\mathsf{BGC^{or}})$ of formality maps.
The action
 $$
 \Ba{ccc}
 GRT_1 \times \cM\cC_{\mathrm{form}}  &\lon & \cM\cC_{\mathrm{form}}\\
 (\exp(\ga), F=\sum \Ga)                & \lon & \sum \exp(\ga) \cdot \Ga
 \Ea
 $$
 is given by (repeated) substitutions of a graph representative of an element  $\ga\in \fg\fr
 \ft_1$ in $Z\mathsf{fGC_3^{or}}$ into black vertices of graphs $\Ga$ representing the formality map $F$ (cf.\ \cite{Wi}).}

\subsubsection{\bf From formality maps to quantizations of strongly homotopy algebras}\label{4: observation on formality and sh Lie alg}  Any element $q \in \cM\cC(\mathsf{BGC}^{or})$ is the same as a morphism of props,
which deforms the standard morphism $q_0$ in such a way that its component
$i_0$ stays invariant. As the prop $\cB ra^{or}$ admits  a canonical representation in $\cE nd_{\f_V,\fg_V}$ for any (possibly, infinite-dimensional) graded vector space $V$,
the universal quantization $q$ gives us a representation
$$
\cQ ua_\infty \stackrel{q_0+q}{\lon} \cB ra \lon \cE nd_{\fg_V,\f_V},
$$
that is,
a $\caL ie_\infty$ morphism,
$$
F=\left\{
F_k: \wedge^k (\fg_V[2]) \lon C^\bu_{GS}(\f_V,\f_V),\ \ \ |F_k|=1-k\right\}_{k\geq 1}
$$
If $q$ satisfies in addition the condition
(\ref{4: universal formality condition}), this $\caL ie_\infty$ morphism $F$ must be a quasi-isomorphism.

\mip

Let $\nu$ be an arbitrary Maurer-Cartan element  in $\fg_V$, that is,  an arbitrary strongly homotopy Lie bialgebra structure on $V$. Then the morphism $F$ gives us in turn an associated Maurer-Cartan element,
\Beq\label{3: qa-qua}
\rho^{\hbar}:=\rho_0+ \sum_{k=1}^\infty \frac{\hbar^{k}}{k!}F_k(\ga,\ldots, \ga),
\Eeq
in  $\fg\fs(\f_V,\f_V)[[\hbar]]$, that is, a continuous $\cA ss\cB_\infty$ structure in $\f_V[[\hbar]]$ which deforms the standard $\cA ss\cB$ structure in $\f_V$,
$$
\rho^\hbar|_{\hbar=0}= \rho_0,
$$
 and also satisfies,
$$
\frac{d \rho^{\hbar}}{d\hbar}|_{\hbar=0}=\ga.
$$
Thus any universal formality map gives a {\em universal quantization of strongly homotopy
algebras}.

\bip

\bip

{\Large
\section{\bf Existence and classification of universal formality maps}
}

\bip

\subsection{A one coloured prop of graph complexes} Consider an $\bS$-bimodule,
$$
\cB^{or}:=\left\{\cB^{or}(m,n):=\prod_{k\geq 0} \cB\cG ra^{or}(k;m,n)\ot_{\bS_k} \sgn_k[3k]\right\}.
$$
An element of $\cB^{or}$  can be understood as a graph $\Ga$ from $\cB ra^{or}$ whose  {\em internal}\, vertices have {\em no}\, labels and are totally ordered
(up to an even permutation)\footnote{A choice of such a  total ordering is called sometimes  an {\em orientation}\, on $\Ga$.  Every graph $\Ga$ from $\cB^{or}$ with more than one internal vertex has precisely two possible orientations, $or$ and $or^{opp}$, and one tacitly identifies $(\Ga,or)=-(\Ga,or^{opp})$. In our pictures  we show only $\Ga$,
a choice of orientation on $\Ga$ being tacitly assumed.}, and which is assigned the homological degree
\Beq\label{4: formula for degree in Bra circ}
|\Ga|=3|V_{int}(\Ga)| -2|E_{int}(\Ga)| -|E_{in}(\Ga)|-|E_{out}(\Ga)|.
\Eeq
One should, however, keep in mind that in reality such a graph $\Ga$ stands for a linear combination of graphs from  $\cP\cG ra^{or}$ whose internal vertices are skewsymmetrized, for example
$$
\Ba{c}\resizebox{1.3mm}{!}{\xy
 (0,0)*{\bullet}="a",
(0,5)*{\bu}="b",
(0,10)*{\circ}="u",
(0,-5)*{\circ}="d",
\ar @{->} "d";"a" <0pt>
\ar @{->} "a";"b" <0pt>
\ar @{->} "b";"u" <0pt>
\endxy}
\Ea =
 \Ba{c}\resizebox{2.8mm}{!}{\xy
(-2,0)*{_{2}},
(-2,5)*{_{1}},
 (0,0)*{\bullet}="a",
(0,5)*{\bu}="b",
(0,10)*{\circ}="u",
(0,-5)*{\circ}="d",
\ar @{->} "d";"a" <0pt>
\ar @{->} "a";"b" <0pt>
\ar @{->} "b";"u" <0pt>
\endxy}
\Ea  -\Ba{c}\resizebox{2.8mm}{!}{\xy
(-2,0)*{_{1}},
(-2,5)*{_{2}},
 (0,0)*{\bullet}="a",
(0,5)*{\bu}="b",
(0,10)*{\circ}="u",
(0,-5)*{\circ}="d",
\ar @{->} "d";"a" <0pt>
\ar @{->} "a";"b" <0pt>
\ar @{->} "b";"u" <0pt>
\endxy}
\Ea
$$
We use such an identification in the following definition of a degree 1 operation on $\cB^{or}$,
$$
\Ba{rccc}
\delta_{\bubu}: & \cB^{or} &\lon &  \cB^{or}\\
& \Ga &\lon & \delta_{\bubu}\Ga=\left\{\Ba{cc}
0 & \mbox{if}\ V_{int}(\Ga)=\emptyset\\
(-1)^{|\Ga|}\Ga\bu_1 \left(\xy
(0,2)*{_{1}},
(7,2)*{_{2}},
 (0,0)*{\bullet}="a",
(7,0)*{\bu}="b",
\ar @{->} "a";"b" <0pt>
\endxy\
- \
\xy
(0,2)*{_{2}},
(7,2)*{_{1}},
 (0,0)*{\bullet}="a",
(7,0)*{\bu}="b",
\ar @{->} "a";"b" <0pt>
\endxy\right) & \mbox{if}\ V_{int}(\Ga)\neq \emptyset\\
\Ea \right.
\Ea
$$
where $\bu_1$ stands for the substitution of the graph $\xy
(0,2)*{_{1}},
(7,2)*{_{2}},
 (0,0)*{\bullet}="a",
(7,0)*{\bu}="b",
\ar @{->} "a";"b" <0pt>
\endxy\
- \
\xy
(0,2)*{_{2}},
(7,2)*{_{1}},
 (0,0)*{\bullet}="a",
(7,0)*{\bu}="b",
\ar @{->} "a";"b" <0pt>
\endxy$ into the internal vertex labeled by $1$ in the skewsymmetrized linear combination of graphs corresponding to $\Ga$, and then the summation over all possible ways of reconnecting  the ``hanging" half-edges
to the two vertices of the substituted graph which respect the directed flow $\uparrow$ on edges and  the condition that each internal vertex of the resulting graph has at least one incoming half-edge and at least one outgoing half-edge (cf.\ (\ref{3: Example of prop coomp})). For example,
$$
\delta_\bubu\Ba{c}\resizebox{1.5mm}{!}{\xy
 (0,0)*{\bullet}="a",
(0,5)*{\circ}="u",
(0,-5)*{\circ}="d",
\ar @{->} "d";"a" <0pt>
\ar @{->} "a";"u" <0pt>
\endxy}\Ea=-\Ba{c}\resizebox{1.3mm}{!}{\xy
 (0,0)*{\bullet}="a",
(0,5)*{\bu}="b",
(0,10)*{\circ}="u",
(0,-5)*{\circ}="d",
\ar @{->} "d";"a" <0pt>
\ar @{->} "a";"b" <0pt>
\ar @{->} "b";"u" <0pt>
\endxy}
\Ea
$$

 Note that each element $\Ga$ from $\cB^{or}$ comes equipped with an {\em orientation}, i.e.\ an element (of unit length) in $\det \R^{|V_{int}(\Ga)|}\ot \det \R^{|E_{in}(\Ga)|} \ot \det  \R^{|E_{in}(\Ga)|}$, and that $\Ga$ is identified with $-\Ga^{opp}$, where $\Ga^{opp}$ is the unique graph which is identical to $\Ga$ in all data except orientation.

 \sip

\ \ \ \ It is obvious that the prop structure in $\cB ra^{or}$ induces
a prop structure in $\cB^{or}$.
What is less obvious is that the one-coloured prop $\cB^{or}$ has a natural differential $\delta_\bu$.

\sip

For any $l\geq 1$ and any $i\in [l]$ define a degree one element,
$$
\theta_i(l,l):=\Ba{c}\resizebox{13mm}{!}{\xy
 (0,7)*{^i},
  (0,-7)*{_i},
  (-6,5)*{...},
  (-6,-5)*{...},
  (-9,7)*{^1},
  (-9,-7)*{_1},
  (9,7)*{^l},
  (9,-7)*{_l},
 (-3,5)*{\circ},
  (-3,-5)*{\circ},
  (-6,5)*{...},
  (-6,-5)*{...},
  (-9,5)*{\circ},
  (-9,-5)*{\circ},
 (3,5)*{\circ},
  (3,-5)*{\circ},
  (6,5)*{...},
  (6,-5)*{...},
  (9,5)*{\circ},
  (9,-5)*{\circ},
 (0,0)*{\bullet}="a",
(0,5)*{\circ}="u",
(0,-5)*{\circ}="d",
\ar @{->} "d";"a" <0pt>
\ar @{->} "a";"u" <0pt>
\endxy}\Ea
$$
in $B^{or}(l,l)$, and then consider a linear map
$$
\Ba{rccc}
\delta_{\wibu}: & \cB^{or}(m,n) &\lon &  \cB^{or}(m,n)\\
& \Ga &\lon & \delta_{\wibu}\Ga:=\sum_{i=1}^m \theta_i(m,m)\circ_v \Ga
- (-1)^{|\Ga|}  \sum_{i=1}^n \Ga\circ_v \theta_i(n,n)
\Ea
$$
where $\circ_v$ stands for the vertical composition in the prop
$\cB ra^{or}$.

\begin{lemma}\label{4: Lemma for d}
The linear map $\delta_\bu:=\delta_\bubu+ \delta_\wibu$ is a differential
in the prop $\cB^{or}$ which makes it identical to the dg prop $\caD \wLB_\infty$.
\end{lemma}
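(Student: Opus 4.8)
The plan is to prove the lemma in three stages: realise both sides as the same graded prop, match the two summands of $\delta_\bu$ against the differential transported through the functor $\caD$, and then read off $\delta_\bu^2=0$ together with the derivation property.

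First I would write down a degree-preserving isomorphism of $\bS$-bimodules $\cB^{or}(m,n)\cong\caD\wLB_\infty(m,n)$. Recall that an element of $\caD\wLB_\infty$ is a hypergraph whose small vertices carry generators of the properad $\wLB_\infty$ and whose big in- and out-vertices are the inputs and outputs manufactured by $\caD$; I would send a small vertex of arity $(p,q)$ to an internal vertex of $\cB^{or}$ with $p$ outgoing and $q$ incoming half-edges, a big vertex to a white in/out-vertex, and the $\CB$-summand of $\caD\wLB_\infty$ (Proposition-definition~\ref{2: Proposition on DP and its representations}(iii)) to the subprop $\{\cB\cG ra^{or}(0;m,n)\}\cong\CB$ of graphs with no internal vertex. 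The degrees agree on the nose: the generator of $\wLB_\infty$ in arity $(p,q)$ sits in $\sgn_p\ot\sgn_q[p+q-3]$, contributing $3-p-q$, which is exactly what the rule ``$+3$ per internal vertex, $-2$ per internal edge, $-1$ per leg'' of (\ref{4: formula for degree in Bra circ}) assigns, the $+3$ being the shift $[3k]$ and the skew-symmetrisation $\sgn_k$ in the definition of $\cB^{or}$. It then remains to check that the prop structures coincide, and here there is nothing to do beyond unravelling definitions: horizontal composition is disjoint union on both sides, and the vertical composition of $\cB ra^{or}$ (erase matched white vertices, sum over admissible gluings of dangling legs and surviving white vertices) is verbatim the three-step recipe by which the vertical composition of $\caD\cP$ was defined in \S\ref{2: Definition of DP prop}; the completions $\prod_k$ also match. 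This stage is pure bookkeeping.

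Second, I would show that the differential induced on $\caD\wLB_\infty$ decomposes as $\delta_\bubu+\delta_\wibu$. The summand $\delta_\bubu$ is the differential of $\wLB_\infty$ acting on the decorations: by (\ref{3: differential in LieBinfty}) this replaces a corolla by a sum of two corollas joined by a single directed edge with the incident half-edges redistributed, and under the identification of the first stage this is precisely the substitution of $\bubu$ into one internal vertex followed by the sum over all admissible reconnections respecting the flow, i.e.\ $\delta_\bubu$; the admissibility constraint that every internal vertex keep at least one incoming and one outgoing half-edge reflects that $\wLB_\infty$ is generated in arities $(p,q)$ with $p,q\ge 1$. The summand $\delta_\wibu$ is the boundary contribution of the (co)commutative bialgebra structure sitting on the white vertices: comparing with the explicit Gerstenhaber--Schack differential on polydifferential operators recorded in the discussion of $C_{poly}$ --- one family of terms applying the reduced diagonal $\bar\Delta$ to the monomial at an out-vertex, the other splitting the differential operator feeding an in-vertex --- one recognises exactly the effect of pre- and post-composing $\Ga$ with the degree-one elements $\theta_i(l,l)$, which graft a single bivalent internal vertex onto the $i$-th strand (these bivalent vertices being the avatar of the distinguished degree-$+1$ arity-$(1,1)$ generator attached to the deformation-theoretic version of $\wLB_\infty$). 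Establishing this correspondence, and pinning down the signs so that the leg-orderings and the skew-symmetrisation of internal vertices are respected, is where the genuine computation lies, because the comultiplications $\Delta^{l-1}$ hidden inside the polydifferential structure make the sign bookkeeping delicate; I expect this to be the main obstacle.

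Finally, once $\delta_\bu=\delta_\bubu+\delta_\wibu$ has been identified with the differential that the exact functor $\caD$ carries from the dg prop $(\wLB_\infty,\delta)$, the relations $\delta_\bu^2=0$ and $\delta_\bu(a\boxtimes b)=\delta_\bu(a)\boxtimes b\pm a\boxtimes\delta_\bu(b)$ hold for free, since $\caD$ takes dg props to dg props. As a cross-check I would also verify $\delta_\bu^2=0$ directly, splitting it as $\delta_\bubu^2+(\delta_\bubu\delta_\wibu+\delta_\wibu\delta_\bubu)+\delta_\wibu^2$: the first term vanishes because $\bubu$ is a Maurer--Cartan element of the oriented graph complex (equivalently $\delta^2=0$ in $\LB_\infty$), the last vanishes by the (co)associativity relations among the white-vertex (co)multiplications, and the mixed term vanishes because grafting a bivalent vertex on a boundary strand commutes with splitting an internal vertex. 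This completes the identification of $(\cB^{or},\delta_\bu)$ with $\caD\wLB_\infty$.
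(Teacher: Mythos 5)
Your first stage (the identification of the underlying graded props, including the degree count $3-p-q$ per generator and the matching of the two composition recipes) is sound and agrees with what the paper does implicitly. The genuine gap is in your second stage, in the account of where $\delta_\wibu$ comes from. The dg prop $\caD\wLB_\infty$ carries \emph{only} the differential that the functor $\caD$ transports from $(\wLB_\infty,\delta)$, namely the splitting of a generating corolla into two generating corollas as in (\ref{3: differential in LieBinfty}); it does \emph{not} contain Gerstenhaber--Schack-type terms coming from the (co)commutative bialgebra structure on the white vertices --- those terms change the number of in/out slots and belong to the deformation complex of a \emph{representation}, not to the prop itself (note that $\delta_\wibu$ preserves the biarity $(m,n)$, whereas the two families of terms of $d_{\fg\fs}$ you invoke raise $m$ or $n$ by one) --- nor does it see the degree-one $(1,1)$-generator of the extended prop $\cP^+$, which enters only the definitions of $\Der$ and $\Def$. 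The actual role of $\delta_\wibu$ is corrective: the substitution of $\bubu$ in $\delta_\bubu$ is constrained only by ``at least one input and one output at each internal vertex'', so it produces terms in which one of the two new black vertices is $(1,1)$-valent, and $(1,1)$ is not a generator of $\wLB_\infty$ since the generators satisfy $p+q\geq 3$ --- a condition your admissibility argument does not account for. These spurious terms cancel pairwise when the bivalent vertex sits on an internal edge, and cancel against $\delta_\wibu$ when it sits on an edge adjacent to a white vertex; only after this cancellation does $\delta_\bu=\delta_\bubu+\delta_\wibu$ reproduce exactly the splitting (\ref{3: differential in LieBinfty}), i.e.\ the $\caD$-image of $\delta$. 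Without establishing this cancellation the identification of the differentials, and therefore also your ``for free'' argument for $\delta_\bu^2=0$ and your proposed decomposition of the cross-check, does not go through.

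For comparison, the paper offers two routes: (i) a direct one, asserting that $\delta_\bu$ acts on black vertices precisely by the splitting (\ref{3: differential in LieBinfty}) and reducing the only delicate point --- the signs --- to the canonical identification of the orientation line $\det \R^{|V_{int}(\Ga)|}\ot \det \R^{|E_{in}(\Ga)|}\ot \det\R^{|E_{out}(\Ga)|}$ with $\det \R^{|e(\Ga)|}\ot \det \R^{\dim H_1(\Ga)}$ from \cite{Th}, combined with the sign conventions of \cite{MaVo}; and (ii) an indirect one, realising $\cB^{or}$ (up to a shift) as the summand $\mathsf{BGC}_3^{or}$ of $\Def(\cQ ua_\infty \stackrel{f}{\rar} \cB ra^{or})$ for a suitable morphism $f$ and reading off the induced differential. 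You correctly flag the sign problem as the main computational obstacle but do not resolve it; the determinant-line argument of route (i) is the tool you would need there.
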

\begin{proof} This claim can be proven in two different ways.

\sip

(i) By construction, every internal vertex of $\cB^{or}$ is at least trivalent.
A direct inspection of the formula for $\delta_\bu$ shows that it acts only
on black (i.e.\ internal) vertices,
$$
\resizebox{13mm}{!}{
\begin{xy}
 <0mm,0mm>*{\bu};<0mm,0mm>*{}**@{},
 <-0.6mm,0.44mm>*{};<-8mm,5mm>*{}**@{-},
 <-0.4mm,0.7mm>*{};<-4.5mm,5mm>*{}**@{-},
 <0mm,0mm>*{};<-1mm,5mm>*{\ldots}**@{},
 <0.4mm,0.7mm>*{};<4.5mm,5mm>*{}**@{-},
 <0.6mm,0.44mm>*{};<8mm,5mm>*{}**@{-},
 <-0.6mm,-0.44mm>*{};<-8mm,-5mm>*{}**@{-},
 <-0.4mm,-0.7mm>*{};<-4.5mm,-5mm>*{}**@{-},
 <0mm,0mm>*{};<-1mm,-5mm>*{\ldots}**@{},
 <0.4mm,-0.7mm>*{};<4.5mm,-5mm>*{}**@{-},
 <0.6mm,-0.44mm>*{};<8mm,-5mm>*{}**@{-},
 \end{xy}},
$$
 of graphs from $\cB^{or}$
by splitting them precisely as in (\ref{3: differential in LieBinfty}). The only non-obvious part of this claim is the formula for signs.  Recall that any graph $\Ga$ from $\cB \cG ra^{or}$ comes equipped with an orientation, that is, with a choice of an element (of unit length) in $\det \R^{|V_{int}(\Ga)|}\ot \det \R^{|E_{in}(\Ga)|}\ot \R^{|E_{out}(\Ga)|}$. It was proven in \cite{Th} that the latter space can be canonically identified with  $\det \R^{|e(\Ga)|}\ot \det \R^{\dim H_1(|Ga|)}$, where $e(\Ga)$ is the set of all edges of $\Ga$ and $H_1(|Ga|)$ is the first homology group of the geometric realization of $\Ga$. The map $\delta_\bu$ leaves $H_1(|Ga|)$ invariant so that, if we want to understand induced orientations on summands of the expression $d\Ga$ for any $\Ga\in \cB \cG ra^{or}$, it is enough
to take care only about the orientation of the space $\R^{|e(\Ga)|}$ generated by all edges. Then the translation of graph orientations  into signs becomes more or less straightforward: it was first done  in \cite{MaVo} (in a different context), and we refer to \S 7.1 of that paper for the proof that the formula for signs is exactly the one given by
(\ref{3: differential in LieBinfty}).

\sip

(ii) Consider a morphism of props,
$f: \cQ ua \lon \cB ra^{or}$,
given on the generators as follows (cf.\ (\ref{3: i_Gac to Bra})),
$$
f\left(\begin{xy}
 <0mm,-0.55mm>*{};<0mm,-3.5mm>*{}**@{.},
 <0.5mm,0.5mm>*{};<2.2mm,2.2mm>*{}**@{.},
 <-0.48mm,0.48mm>*{};<-2.2mm,2.2mm>*{}**@{.},
 <0mm,0mm>*{\circ};<0mm,0mm>*{}**@{},
 <0.5mm,0.5mm>*{};<2.7mm,2.8mm>*{^2}**@{},
 <-0.48mm,0.48mm>*{};<-2.7mm,2.8mm>*{^1}**@{},
 \end{xy}\right)=0,
 \ \ \ \ \ \ \ \ \ \ , \ \ \ \ \ \ \ \ \ \ \
f\left(\begin{xy}
 <0mm,0.66mm>*{};<0mm,4mm>*{}**@{.},
 <0.39mm,-0.39mm>*{};<2.2mm,-2.2mm>*{}**@{.},
 <-0.35mm,-0.35mm>*{};<-2.2mm,-2.2mm>*{}**@{.},
 <0mm,0mm>*{\circ};<0mm,0mm>*{}**@{},
   <0.39mm,-0.39mm>*{};<2.9mm,-4mm>*{^2}**@{},
   <-0.35mm,-0.35mm>*{};<-2.8mm,-4mm>*{^1}**@{},
\end{xy}\right)=0,
$$
$$
f\left(\Ba{c}
\xy
 <0mm,0.55mm>*{};<0mm,3.5mm>*{}**@{-},
 <0.5mm,-0.5mm>*{};<2.2mm,-2.2mm>*{}**@{-},
 <-0.48mm,-0.48mm>*{};<-2.2mm,-2.2mm>*{}**@{-},
 <0mm,0mm>*{\bu};<0mm,0mm>*{}**@{},
 <0.5mm,-0.5mm>*{};<2.7mm,-3.2mm>*{_2}**@{},
 <-0.48mm,-0.48mm>*{};<-2.7mm,-3.2mm>*{_1}**@{},
 \endxy\Ea\right)\ =\ \  \xy
(0,2)*{_{1}},
(7,2)*{_{2}},
 (0,0)*{\bullet}="a",
(7,0)*{\bu}="b",
\ar @{->} "a";"b" <0pt>
\endxy\
- \
\xy
(0,2)*{_{2}},
(7,2)*{_{1}},
 (0,0)*{\bullet}="a",
(7,0)*{\bu}="b",
\ar @{->} "a";"b" <0pt>
\endxy\ \ \ \ \ \  ,\ \ \ \ \ \ \
f\left(\begin{xy}
 <0mm,0.66mm>*{};<0mm,3mm>*{}**@{.},
 <0.39mm,-0.39mm>*{};<2.2mm,-2.2mm>*{}**@{.},
 <-0.35mm,-0.35mm>*{};<-2.2mm,-2.2mm>*{}**@{-},
 <0mm,0mm>*{\bu};<0mm,0mm>*{}**@{},
\end{xy}\right)\ =\ \ \xy
 (0,0)*{\bullet}="a",
(0,5)*{\circ}="u",
(0,-5)*{\circ}="d",
\ar @{->} "d";"a" <0pt>
\ar @{->} "a";"u" <0pt>
\endxy
$$
Then, as before, the dg vector space
$$
\Def(\cQ ua_\infty \stackrel{f}{\rar} \cB ra^{or})\simeq \mathsf{fGC}_3^{or} \oplus \mathsf{BGC^{or}_3}
$$
splits into a direct sum of complexes. Up to a degree shift graphs from  $\mathsf{BGC}_3^{or}$ can be identified with graphs from  $\cB^{or}$. It remains to notice that the differential  in $\mathsf{BGC}_3^{or}$ induced by the morphism $f$ is precisely
the sum $d_\bubu+d_\wibu$.
\end{proof}

As the functor $\caD$ exact, the above result computes immediately the cohomology of the dg prop $\cB^{or}$.

\subsubsection{\bf Corollary}\label{4: Cohomology of Bra_circ} {\em The canonical surjection
$
\pi: \cB^{or}\equiv \caD\wLB_\infty \lon \caD\wLB
$
 is a quasi-isomorphism.}

\subsection{The graph complex $\mathsf{BGC^{or}}$ reinterpreted} Note that elements
in $\mathsf{BBC}_3^{or}$ which have no internal vertices are cycles with respect to the differential $\delta_\bu$, that is,
$$
\delta_\bu \left(\Ba{c}\resizebox{13mm}{!}{\xy
  (-6,5)*{...},
  (-6,-5)*{...},
  (-9,7)*{^1},
  (-9,-7)*{_1},
  (9,7)*{^m},
  (9,-7)*{_n},
 (-3,5)*{\circ},
  (-3,-5)*{\circ},
  (-6,5)*{...},
  (-6,-5)*{...},
  (-9,5)*{\circ},
  (-9,-5)*{\circ},
 (3,5)*{\circ},
  (3,-5)*{\circ},
  (6,5)*{...},
  (6,-5)*{...},
  (9,5)*{\circ},
  (9,-5)*{\circ},
(0,5)*{\circ}="u",
(0,-5)*{\circ}="d",
\endxy}\Ea\right)=0.
$$
Therefore there is a morphism
of {\em dg}\, props,
$$
i_\circ: (\cA ss\cB,0) \lon (\caD\wLB_\infty, \delta_\bu)
$$
given on the generators as follows
\Beq\label{4: map g from AssB to Bra circ}
i_\circ\left(\begin{xy}
 <0mm,-0.55mm>*{};<0mm,-3.5mm>*{}**@{.},
 <0.5mm,0.5mm>*{};<2.2mm,2.2mm>*{}**@{.},
 <-0.48mm,0.48mm>*{};<-2.2mm,2.2mm>*{}**@{.},
 <0mm,0mm>*{\circ};<0mm,0mm>*{}**@{},
 <0.5mm,0.5mm>*{};<2.7mm,2.8mm>*{^2}**@{},
 <-0.48mm,0.48mm>*{};<-2.7mm,2.8mm>*{^1}**@{},
 \end{xy}\right)=
 \Ba{c}\resizebox{8mm}{!}{ \xy
(-3,9)*{^1},
(3,9)*{^2},
 (0,2)*{\circ}="a",
(-3,7)*{\circ}="b_1",
(3,7)*{\circ}="b_2",
 \endxy}\Ea  \ \ \ \ \ \ \ \ \ \ , \ \ \ \ \ \ \ \ \ \ \
i_\circ\left(\begin{xy}
 <0mm,0.66mm>*{};<0mm,4mm>*{}**@{.},
 <0.39mm,-0.39mm>*{};<2.2mm,-2.2mm>*{}**@{.},
 <-0.35mm,-0.35mm>*{};<-2.2mm,-2.2mm>*{}**@{.},
 <0mm,0mm>*{\circ};<0mm,0mm>*{}**@{},
   <0.39mm,-0.39mm>*{};<2.9mm,-4mm>*{^2}**@{},
   <-0.35mm,-0.35mm>*{};<-2.8mm,-4mm>*{^1}**@{},
\end{xy}\right)=\Ba{c}\resizebox{8mm}{!}{ \xy
(-3,0)*{_1},
(3,0)*{_2},
 (0,7)*{\circ}="a",
(-3,2)*{\circ}="b_1",
(3,2)*{\circ}="b_2",
 \endxy}\Ea
\Eeq
and hence a morphism of dg props $\cA ss\cB_\infty \rar \caD\wLB_\infty$ denoted by the same letter $i_\circ$.
Using the above identification of $\caD\LB_\infty$ with $\cB^{or}$ and the relation of the latter with $\cB ra^{or}$, it is now straightforward to see 
that the deformation complex
of the morphism $i_\circ$,
$$
\Def(\cA ss\cB \stackrel{i_\circ}{\lon} \caD\wLB_\infty   )
$$
can be identified as a $\caL ie_\infty$ algebra with the earlier defined graph complex $\mathsf{BGC}^{or}$.
Thus the Maurer-Cartan elements $\ga$ of, for example, the $\caL ie_\infty$ algebra  $\mathsf{BGC}^{or}$ can be understood not only as morphisms of two-coloured dg props,
$$
(\cQ ua_\infty, \delta) \stackrel{\ga}{\lon} (\cB ra^{or},0)
$$
but also as morphisms of one-coloured dg props,
$$
(\cA ss\cB_\infty, \delta) \stackrel{\ga}{\lon} (\caD\wLB_\infty, \delta_\bu).
$$
This double meaning of one and the same element $\ga\in \cM\cC(\mathsf{BGC^{or}})$ is not  surprising and  can be understood in terms of representations as follows.

\sip


Let $\nu:\LB_\infty \rar \cE nd_V$ be a strongly homotopy Lie bialgebra structure in a dg vector space $V$; we can understand it  as
a Maurer-Cartan element $\nu$ in the Lie algebra $\fg_V$. According to the general principle (see \S 2), there is an associated representation, $\rho_\nu: \caD\LB_\infty \lon \cE nd_{\f_V}$ which we would like to describe explicitly.
In fact, as we work with {\em completed}\, prop $\wLB_\infty$ one has to take care about convergence in the standard way: the associated representation will be a {\em continuous}\, morphism $\rho_\nu: \caD\wLB_\infty \lon \cE nd_{\f_V}[[\hbar]]$ of topological props.

\sip
Recall (see \S {\ref{4: Subsection on repr of Bra}}) that there is a canonical representation of the 2-coloured prop $\cB ra^{or}$  in the pair of vector spaces $(\fg, \f_V)$  which associates to a graph
$\Ga$ from $\cB\cG  ra^{or}(k;m,n)$  a linear map (\ref{4: formular for rho_Ga for Bra}),
$$
\Ba{rccc}
\rho_\Ga: & \fg_V^{\ot k}\ot \f_V^{\ot n} & \lon & \f_V^{\ot m}\\
           & \ga_1\ot\ldots\ot \ga_k\ot f_1\ot\ldots\ot f_n &\lon& \rho_\Ga(\ga_1\ot\ldots\ot \ga_k\ot f_1\ot\ldots\ot f_n).
\Ea
$$
According to Lemma {\ref{4: Lemma for d}},  a generator of $\caD\wLB_\infty(m,n)$ can be understood as  a graph $\Ga^{skew}$ obtained by skewsymmetrization of labels of internal vertices of some graph $\Ga$ from $\cB \cG ra(k;m,n)$. Then the formula
$$
\Ba{rccc}
\Phi_\Ga^\nu:=\rho(\Ga^{skew}): & \f_V^{\ot n} & \lon & \f_V^{\ot m}[[\hbar]]\\
           &  f_1\ot\ldots\ot f_n &\lon& \frac{\hbar^k}{k!}\rho_\Ga(\nu\ot\ldots\ot \nu\ot f_1\ot\ldots\ot f_n)
\Ea
$$
gives us the required continous representation of  $\caD\wLB_\infty$ in $\f_V[[\hbar]]$. Comparing this formula with (\ref{3: qa-qua}), we see that any Maurer-Cartan element $\ga$ of the $\caL ie_\infty$ algebra  $\mathsf{BGC^{or}}$ can indeed be  understood as a morphisms of one-coloured dg props
$\ga': \cA ss\cB_\infty \stackrel{\ga}{\lon} \caD\wLB_\infty$
such that the $\cA ss\cB_\infty$ algebra structure in $\f_V[[\hbar]]$ induced from the representation
$\nu$ in accordance with the formality map $\ga\in \mathsf{BGC^{or}}$ and formula  (\ref{3: qa-qua}) coincides precisely with the composition
$$
\cA ss\cB_\infty \stackrel{\ga'}{\lon} \cB ra^{or} \stackrel{\rho_\nu}{\lon}  \cE nd_{\f_V}[[\hbar]].
$$
 Put another way, the identification of $\caL ie_\infty$ algebras $\mathsf{BGC^{or}}$ and
 $\Def(\cA ss\cB \stackrel{i_\circ}{\lon} \caD\wLB_\infty   )$ is no more than a universal (graph) incarnation of the main observation in \S {\ref{4: observation on formality and sh Lie alg}} that formality maps give us quantizations of strongly homotopy Lie bialgebras.

 \sip

 Similarly, morphisms of (non-differential) props,
$$
\cA ss\cB \lon \caD\wLB
$$
can be understood as universal quantizations of Lie bialgebras.

\subsection{Existence Theorem}\label{5: Existence Theorem for formalities} {\em For any Drinfeld associator $\fA$ there is an associated formality map $F_\fA\in \cM\cC_{\mathrm{form}}$. In particular, the set of universal formality
maps is non-empty.}
\begin{proof}
The Etingof-Kazhdan theorem \cite{EK} (see also \cite{EE}) says that for any Drinfeld associator $\fA$ there
there is a morphism of props
$$
f_\fA: \Assb \lon \caD\wLB
$$
such that
\Beqrn
f_\fA\left(\begin{xy}
 <0mm,0.66mm>*{};<0mm,3mm>*{}**@{.},
 <0.39mm,-0.39mm>*{};<2.2mm,-2.2mm>*{}**@{.},
 <-0.35mm,-0.35mm>*{};<-2.2mm,-2.2mm>*{}**@{.},
 <0mm,0mm>*{\circ};<0mm,0mm>*{}**@{},
   <0.39mm,-0.39mm>*{};<2.9mm,-4mm>*{^2}**@{},
   <-0.35mm,-0.35mm>*{};<-2.8mm,-4mm>*{^1}**@{},
\end{xy}\right)
&=&
\Ba{c}\resizebox{8mm}{!}{ \xy
(-3,0)*{_1},
(3,0)*{_2},
 (0,7)*{\circ}="a",
(-3,2)*{\circ}="b_1",
(3,2)*{\circ}="b_2",
 \endxy}\Ea \ + \ \frac{1}{2}\hspace{-4mm} \Ba{c}\resizebox{12mm}{!}{
\xy
(-5,0)*{_1},
(5,0)*{_2},
(0,13)*{\circ}="0",
 (0,7)*{\bu}="a",
(-5,2)*{\circ}="b_1",
(5,2)*{\circ}="b_2",
(-8,-2)*{}="c_1",
(-2,-2)*{}="c_2",
(2,-2)*{}="c_3",
\ar @{->} "a";"0" <0pt>
\ar @{<-} "a";"b_1" <0pt>
\ar @{<-} "a";"b_2" <0pt>
\endxy}
\Ea\
\ \ + \ \ \ \mbox{terms\ with\ $\geq 2$\ black (internal) vertices}
 \\
f_\fA\left(\begin{xy}
 <0mm,-0.55mm>*{};<0mm,-3.5mm>*{}**@{.},
 <0.5mm,0.5mm>*{};<2.2mm,2.2mm>*{}**@{.},
 <-0.48mm,0.48mm>*{};<-2.2mm,2.2mm>*{}**@{.},
 <0mm,0mm>*{\circ};<0mm,0mm>*{}**@{},
 <0.5mm,0.5mm>*{}  ;<2.7mm,2.8mm>*{^2}**@{},
 <-0.48mm,0.48mm>*{};<-2.7mm,2.8mm>*{^1}**@{},
 \end{xy}\right)
 &=&
 \Ba{c}\resizebox{8mm}{!}{ \xy
(-3,9)*{^1},
(3,9)*{^2},
 (0,2)*{\circ}="a",
(-3,7)*{\circ}="b_1",
(3,7)*{\circ}="b_2",
 \endxy}\Ea \ + \ \frac{1}{2}
 \hspace{-4mm} \Ba{c}\resizebox{12mm}{!}{
\xy
(-5,7)*{_1},
(5,7)*{_2},
(0,-6)*{\circ}="0",
 (0,0)*{\bu}="a",
(-5,5)*{\circ}="b_1",
(5,5)*{\circ}="b_2",
(-8,9)*{}="c_1",
(-2,9)*{}="c_2",
(2,9)*{}="c_3",
\ar @{<-} "a";"0" <0pt>
\ar @{->} "a";"b_1" <0pt>
\ar @{->} "a";"b_2" <0pt>
\endxy}
\Ea
\ \ \ +\ \ \  \mbox{terms\ with\ $\geq 2$\ black (internal) vertices}
\Eeqrn
The theorem is proven once we show that there exists a morphism of dg props
$$
F_\fA: \Assb_\infty \lon \cB^{or}\equiv \caD\wLB_\infty
$$
which makes the following diagram commutative,
$$
 \xymatrix{
\Assb_\infty\ar[r]^{F_\fA}\ar[d]_p  & \caD\wLB_\infty\ar[d]^\pi\\
 \Assb \ar[r]_{f_\fA} &
 \caD\wLB}
$$
and satisfies the conditions
 \Beq\label{5: Boundary cond for formality map}
\pi_1\circ F_\fA\left(\Ba{c}\resizebox{13mm}{!}{ \xy
 (0,7)*{\overbrace{\ \ \ \  \ \ \ \ \ \ \ \ \ \ }},
 (0,9)*{^m},
 (0,3)*{^{...}},
 (0,-3)*{_{...}},
 (0,-7)*{\underbrace{  \ \ \ \  \ \ \ \ \ \ \ \ \ \ }},
 (0,-9)*{_n},
 (0,0)*{\circ}="0",
(-7,5)*{}="u_1",
(-4,5)*{}="u_2",
(4,5)*{}="u_3",
(7,5)*{}="u_4",
(-7,-5)*{}="d_1",
(-4,-5)*{}="d_2",
(4,-5)*{}="d_3",
(7,-5)*{}="d_4",
\ar @{.} "0";"u_1" <0pt>
\ar @{.} "0";"u_2" <0pt>
\ar @{.} "0";"u_3" <0pt>
\ar @{.} "0";"u_4" <0pt>
\ar @{.} "0";"d_1" <0pt>
\ar @{.} "0";"d_2" <0pt>
\ar @{.} "0";"d_3" <0pt>
\ar @{.} "0";"d_4" <0pt>
\endxy}\Ea\right)=
\frac{1}{m!n!}\Ba{c}\resizebox{16mm}{!}{\xy
(0,7.5)*{\overbrace{\ \ \ \ \ \ \ \ \  \ \ \ \ \ \ \ \ \ \ }},
 (0,9.5)*{^m},
 (0,-7.5)*{\underbrace{\ \ \ \ \ \ \ \ \ \ \ \  \ \ \ \ \ \ }},
 (0,-9.9)*{_n},
  (-6,5)*{...},
  (-6,-5)*{...},
 (-3,5)*{\circ}="u1",
  (-3,-5)*{\circ}="d1",
  (-6,5)*{...},
  (-6,-5)*{...},
  (-9,5)*{\circ}="u2",
  (-9,-5)*{\circ}="d2",
 (3,5)*{\circ}="u3",
  (3,-5)*{\circ}="d3",
  (6,5)*{...},
  (6,-5)*{...},
  (9,5)*{\circ}="u4",
  (9,-5)*{\circ}="d4",
 (0,0)*{\bullet}="a",
\ar @{->} "d1";"a" <0pt>
\ar @{->} "a";"u1" <0pt>
\ar @{->} "d2";"a" <0pt>
\ar @{->} "a";"u2" <0pt>
\ar @{->} "d3";"a" <0pt>
\ar @{->} "a";"u3" <0pt>
\ar @{->} "d4";"a" <0pt>
\ar @{->} "a";"u4" <0pt>
\endxy}\Ea
\Eeq
for all $m+n\geq 4$, $m,n\geq 1$. Here $\pi_1: \caD\wLB_\infty \rar \caD\wLB_\infty^{(1)}$ is the projection to the subspace spanned by graphs with precisely one black (internal) vertex.

\sip

One can construct the required morphism $F$ by a standard induction on the degree of the generators of $\Assb_\infty$.
As the map $\pi$ is a surjective quasi-isomorphism,
there exists cycles $B_2^1$ and $B_1^2$ in $\caD\LB_\infty$ such that
$$
\pi(B_2^1)= f_\fA\left(\begin{xy}
 <0mm,0.66mm>*{};<0mm,3mm>*{}**@{.},
 <0.39mm,-0.39mm>*{};<2.2mm,-2.2mm>*{}**@{.},
 <-0.35mm,-0.35mm>*{};<-2.2mm,-2.2mm>*{}**@{.},
 <0mm,0mm>*{\circ};<0mm,0mm>*{}**@{},
\end{xy}\right)\ \ \ \  ,\ \ \ \ \
\pi(B_1^2)=f\left(\begin{xy}
 <0mm,-0.55mm>*{};<0mm,-3.5mm>*{}**@{.},
 <0.5mm,0.5mm>*{};<2.2mm,2.2mm>*{}**@{.},
 <-0.48mm,0.48mm>*{};<-2.2mm,2.2mm>*{}**@{.},
 <0mm,0mm>*{\circ};<0mm,0mm>*{}**@{},
 \end{xy}\right).
$$
We set the values of $F$ on degree zero generators to be given by
$$
F_\fA\left(\begin{xy}
 <0mm,0.66mm>*{};<0mm,3mm>*{}**@{.},
 <0.39mm,-0.39mm>*{};<2.2mm,-2.2mm>*{}**@{.},
 <-0.35mm,-0.35mm>*{};<-2.2mm,-2.2mm>*{}**@{.},
 <0mm,0mm>*{\circ};<0mm,0mm>*{}**@{},
\end{xy}\right):= B_2^1\ \ \ \ \ , \ \ \ \ \
F_\fA\left(\begin{xy}
 <0mm,-0.55mm>*{};<0mm,-3.5mm>*{}**@{.},
 <0.5mm,0.5mm>*{};<2.2mm,2.2mm>*{}**@{.},
 <-0.48mm,0.48mm>*{};<-2.2mm,2.2mm>*{}**@{.},
 <0mm,0mm>*{\circ};<0mm,0mm>*{}**@{},
 \end{xy}\right):= B_1^2.
$$

\sip


Assume now that values of $F$ are defined on the generators of $\Assb_\infty$ of degree
$\geq -N+1$. Let us define $F$ on generators of degree $-N$, that is, on $(m,n)$ corollas,
$c_n^m:=\Ba{c}\resizebox{8mm}{!}{
 \xy
 (0,7)*{\overbrace{\ \ \ \  \ \ \ \ \ \ \ \ \ \ }},
 (0,9)*{^m},
 (0,3)*{^{...}},
 (0,-3)*{_{...}},
 (0,-7)*{\underbrace{  \ \ \ \  \ \ \ \ \ \ \ \ \ \ }},
 (0,-9)*{_n},
 (0,0)*{\circ}="0",
(-7,5)*{}="u_1",
(-4,5)*{}="u_2",
(4,5)*{}="u_3",
(7,5)*{}="u_4",
(-7,-5)*{}="d_1",
(-4,-5)*{}="d_2",
(4,-5)*{}="d_3",
(7,-5)*{}="d_4",
\ar @{.} "0";"u_1" <0pt>
\ar @{.} "0";"u_2" <0pt>
\ar @{.} "0";"u_3" <0pt>
\ar @{.} "0";"u_4" <0pt>
\ar @{.} "0";"d_1" <0pt>
\ar @{.} "0";"d_2" <0pt>
\ar @{.} "0";"d_3" <0pt>
\ar @{.} "0";"d_4" <0pt>
\endxy}\Ea
$
satisfying $m+n-3=N$.  Note that $\delta e_n^m$ is a linear combination of graphs whose vertices are decorated by
corollas of degrees
$\geq -N+1$ (as  $\delta$ increases degree by $+1$).
By induction, ${F}(\delta  e_n^m)$
is a well-defined closed element in $\cB ra^{or}$. As $f\circ p(e_n^m)=0$, we have
$\pi({F}(\delta  e_n^m))=0$. Since the surjection $\pi$ is a
quasi-isomorphism, the element ${F}(\delta  e_n^m)$ must be exact. Thus there exists $B_n^m\in \cB ra^{or}$ such that
$$
\delta_\bu {B_n^m}= {\cF}(\delta e_n^m).
$$
We set ${F}(e_n^m):= B_n^m$.

\sip

To complete the inductive construction of ${F}$ one has to show that
$\pi_1\circ F(e_n^m)=\frac{1}{m!n!}
\Ba{c}\resizebox{8mm}{!}{
\xy
(0,7.5)*{\overbrace{\ \ \ \ \ \ \ \ \  \ \ \ \ \ \ \ \ \ \ }},
 (0,9.5)*{^m},
 (0,-7.5)*{\underbrace{\ \ \ \ \ \ \ \ \ \ \ \  \ \ \ \ \ \ }},
 (0,-9.9)*{_n},
  (-6,5)*{...},
  (-6,-5)*{...},
 (-3,5)*{\circ}="u1",
  (-3,-5)*{\circ}="d1",
  (-6,5)*{...},
  (-6,-5)*{...},
  (-9,5)*{\circ}="u2",
  (-9,-5)*{\circ}="d2",
 (3,5)*{\circ}="u3",
  (3,-5)*{\circ}="d3",
  (6,5)*{...},
  (6,-5)*{...},
  (9,5)*{\circ}="u4",
  (9,-5)*{\circ}="d4",
 (0,0)*{\bullet}="a",
\ar @{->} "d1";"a" <0pt>
\ar @{->} "a";"u1" <0pt>
\ar @{->} "d2";"a" <0pt>
\ar @{->} "a";"u2" <0pt>
\ar @{->} "d3";"a" <0pt>
\ar @{->} "a";"u3" <0pt>
\ar @{->} "d4";"a" <0pt>
\ar @{->} "a";"u4" <0pt>
\endxy}\Ea
$.
This step is the same as in \cite{Me1} and we omit the details.
\end{proof}


\subsection{Deformation complex of a formality map}
Let $F_\fA$ be any formality map. By composing derivations of the {\em prop}\, $\LB_\infty$ with $F_\fA$ we obtain a morphism of complexes,
\Beq\label{5: s from Der Lieb to Def}
s: \Der(\wLB_\infty)_{\mathrm{prop}}[-1]\equiv \Def\left(\wLB_\infty \stackrel{\Id}{\lon}\wLB_\infty\right)_{\mathrm{prop}} \lon \Def\left(\Assb_\infty \stackrel{F_\fA}{\lon} \caD\wLB_\infty \right)
\Eeq

\subsubsection{\bf Proposition}\label{5: Prop on map s} {\em The map $s$ in (\ref{5: s from Der Lieb to Def}) is a quasi-isomorphism}.
\begin{proof} The complex $\Der(\wLB_\infty)$ of {\em properadic}\, derivations is described
very explicitly in terms of {\em connected}\, graphs in \cite{MW2}; the complex $\Der(\LB_\infty)_{\mathrm{prop}}$ of derivations of the {\em prop}\, $\LB_\infty$ can be described by the same graphs but with the {\em connectedness}\, assumption dropped.  Both complexes in (\ref{5: s from Der Lieb to Def}) admit filtrations by the number of edges in the graphs, and the map $s$ preserves these filtrations, and hence
induces a morphism of the associated spectral sequences,
$$
s_r: (\cE_r\Der(\wLB_\infty)_{\mathrm{prop}}[-1], d_r) \lon \left(\cE_r\Def(\Assb_\infty \stackrel{F_\fA}{\lon} \caD\wLB_\infty ), \delta_r\right), \ \ \ r\geq 0,
$$
The induced differential $d_0$ on the initial page of the spectral sequence of the l.h.s.\ is trivial, $d_0=0$. The induced differential  on the initial page of the spectral sequence of the r.h.s. is not trivial and is determined by the following
summand
$$
\Ba{c}\resizebox{8mm}{!}{ \xy
%
 (0,3)*{\circ}="a",
(-3,-2)*{\circ}="b_1",
(3,-2)*{\circ}="b_2",
(-5,-2)*{}="x",
(5,-2)*{}="y",
(-5,3)*{}="X",
(5,3)*{}="Y",
\ar @{.} "x";"y" <0pt>
\ar @{.} "X";"Y" <0pt>
 \endxy}\Ea\ \ \ \ \ + \ \ \ \ \
 \Ba{c}\resizebox{8mm}{!}{ \xy
%
 (0,-2)*{\circ}="a",
(-3,3)*{\circ}="b_1",
(3,3)*{\circ}="b_2",
(-5,3)*{}="x",
(5,3)*{}="y",
(-5,-2)*{}="X",
(5,-2)*{}="Y",
\ar @{.} "x";"y" <0pt>
\ar @{.} "X";"Y" <0pt>
 \endxy}\Ea
$$
 in $F_\fA$. Hence the differential $\delta_0$ acts only on white vertices of graphs from $ \Def(\Assb \stackrel{F_\fA}{\lon} \caD\wLB_\infty )$ by splitting them in two white vertices
and redistributes outgoing or incoming edges and all possible ways. The cohomology
$$
E_1\Def\left(\Assb_\infty \stackrel{F_\fA}{\lon} \caD\wLB_\infty \right)=H^\bu\left(E_0\Def(\Assb_\infty \stackrel{F_\fA}{\lon} \caD\wLB_\infty ), \delta_0\right)
$$
is spanned by graphs all of whose white vertices are precisely univalent (see Theorem 3.2.4 in \cite{Me2} or Appendix A in \cite{Wi}) and hence is isomorphic to $\Der(\wLB_\infty)_{\mathrm{prop}}[-1]$ as a graded vector space.
By boundary condition (\ref{5: Boundary cond for formality map}), the induced differential $\delta_1$ in $E_1\Def(\Assb \stackrel{F_\fA}{\lon} \caD\wLB_\infty )$ agrees with the induced differential $d_1$ in
$\cE_1\Der(\wLB_\infty)_{\mathrm{prop}}[-1]$ so that the morphism of the next pages of the spectral sequences,
$$
s_1: (\cE_1\Der(\wLB_\infty)_{\mathrm{prop}}[-1], d_1)\simeq \Der(\wLB_\infty)_{\mathrm{prop}}[-1] \lon \left(\cE_1\Def(\Assb_\infty \stackrel{F_\fA}{\lon} \caD\wLB_\infty ), \delta_1\right)
$$
is an isomorphism. By the Comparison Theorem, the morphism $s$ is a quasi-isomorphism.
\end{proof}

\subsubsection{\bf Corollary}\label{5: Corollary on GCor and Def(assb to Dlie)} {\em For any formality morphism $F_\fA$ the associated deformation complex comes equipped with a canonical morphism
 of complexes
 $$
 \mathsf{fGC}_3^\uparrow \lon \Def\left(\Assb_\infty \stackrel{F_\fA}{\lon} \caD \wLB_\infty\right)[1]
 $$
 which is a quasi-isomorphism. In particular,
  $$
  H^{\bu+1}\left(\Def(\Assb_\infty \stackrel{F_\fA}{\lon} \caD\LB_\infty )\right)= H^\bu(\mathsf{fGC}^\uparrow_3).
  $$
}
\begin{proof} There is a morphism of dg Lie algebras (\ref{4: morphism from fGC_3 to derLieb-prop}) which is a quasi-isomorphism. Hence the claim follows from
Proposition {\ref{5: Prop on map s}}.
\end{proof}

\subsection{Proof of the Main Theorem {\ref{1: Main Theorem}}}

Claim (i) follows from Theorem {\ref{5: Existence Theorem for formalities}}.

\sip

Claim (ii) follows from the identification of the deformation complex
$\Def(\Assb_\infty \stackrel{F_\fA}{\lon} \caD\wLB_\infty )$ with the subcomplex $\mathsf{BGC}_3^{or}$ of the complex $\Def(\cQ ua_\infty \stackrel{f}{\rar} \cB ra^{or})$.

\mip

Claim (iii) is the Corollary {\ref{5: Corollary on GCor and Def(assb to Dlie)}}.
\mip

Claim (iv) follows from Corollary {\ref{5: Corollary on GCor and Def(assb to Dlie)}}
and the fact (\ref{4: Formula for H^0 DerLieb-prop}) that $H^0(\mathsf{fGC}^\uparrow_3)=\grt$.

\

\def\cprime{$'$}

\end{document}